\newtheorem{thm}{Theorem}
\newtheorem{prop}{Proposition}
\newtheorem{lem}[prop]{Lemma}
\theoremstyle{definition}
\newtheorem*{rem}{Remark}
\newtheorem*{rems}{Remarks}
\renewcommand{\pmod}[1]{\,(\textup{mod}\,#1)}
\newcommand{\mb}{\mathbb}
\newcommand{\mc}{\mathcal}
\newcommand{\ol}{\overline}
\newcommand{\ul}{\underline}
\newcommand{\leqs}{\leqslant }
\newcommand{\geqs}{\geqslant }
\newcommand{\be}{\begin{equation*}}
\newcommand{\ee}{\end{equation*} }
\newcommand{\ben}{\begin{equation}}
\newcommand{\een}{\end{equation} }
\newcommand{\bs}{\begin{split}}
\newcommand{\es}{\end{split}}
\newcommand{\bmu}{\begin{multline*}}
\newcommand{\emu}{\end{multline*}}
\newcommand{\bmun}{\begin{multline}}
\newcommand{\emun}{\end{multline}}
\begin{document}
\title[Discrete negative moments]{Lower bounds for discrete negative moments of the Riemann zeta function}

\author{Winston Heap}
\address{Max Planck Institute for Mathematics, Vivatsgasse 7, 53111 Bonn.}
\email{winstonheap@gmail.com}

\author{Junxian Li}
\address{Max Planck Institute for Mathematics, Vivatsgasse 7, 53111 Bonn.}
\email{jli135@mpim-bonn.mpg.de}

\author{Jing Zhao}
\address{Max Planck Institute for Mathematics, Vivatsgasse 7, 53111 Bonn.}
\email{jingzh95@gmail.com}

\maketitle

\begin{abstract}
We prove lower bounds for the discrete negative $2k$th moment of the derivative of the Riemann zeta function for all fractional $k\geqslant 0$. The bounds are in line with a conjecture of Gonek and Hejhal. Along the way, we  prove a general formula for the discrete twisted second moment of the Riemann zeta function. This agrees with a conjecture of Conrey and Snaith.
\end{abstract}


\section{Introduction}

We are interested in the discrete negative moments 
\begin{equation}
J_{-k}(T)=\sum_{0\leqs \gamma\leqs T} \frac{1}{|\zeta^\prime(\rho)|^{2k}}
\end{equation}
when $k$ is fractional. These can give information about small values of the derivative $\zeta^\prime(\rho)$, and in the special case $k=1/2$ are related to partial sums of the M\"obius function (e.g. see Theorem 14.27 of \cite{T}). A natural assumption when considering these moments is that all the non-trivial zeros of the zeta function are simple. We therefore assume this throughout the paper unless otherwise mentioned. 

Gonek \cite{G} and Hejhal \cite{H} independently conjectured that 
\[
J_{-k}(T)\asymp T(\log T)^{(k-1)^2}
\]
for all $k\geqs 0$. However, the range of $k$ in which this conjecture holds seems to be in doubt since Gonek (unpublished) has suggested that there exist infinitely many zeros $\rho$ for which $\zeta^\prime(\rho)^{-1}\gg \gamma^{1/3-\epsilon}$, in which case the conjecture would fail for $k>3/2$. Hughes, Keating and O'Connell \cite{HKO} used random matrix theory to predict a precise constant in this conjecture. Interestingly, their formulas on the random matrix theory side undergo a phase change at the point $k=3/2$ which gives alternative evidence to suggest that the conjecture fails for $k>3/2$. 

Aside from these conjectures, little is known about $J_{-k}(T)$. Assuming the Riemann hypothesis (RH) Gonek \cite{G} showed that 
\[J_{-1}(T)\gg T\]
and that by applying H\"older's inequality
\[J_{-k}(T)\gg T(\log T)^{1-3k}\]
for all $k>0$. Milinovich and Ng \cite{MN} later refined Gonek's bound by showing that 
\[J_{-1}(T)\geqs (1+o(1)) \frac{3}{2\pi^3} T.\] 
The value of the constant here is half the conjectured value \cite{G, HKO}.
In the special case $k=1/2$, Heath-Brown \cite[pg.\! 386]{T}, shows
\[
J_{-1/2}(T)\gg T
\] 
via the connection with $\sum_{n\leqs x}\mu(n)$ on assuming RH. Our aim in this paper is to improve these lower bounds.


\begin{thm}\label{main thm}Assume RH and that all zeros are simple. Then
\begin{equation}
J_{-k}(T)\gg T(\log T)^{(k-1)^2}
\end{equation}
for all fractional $k\geqs 0$.
\end{thm}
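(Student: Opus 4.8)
The plan is to prove the lower bound via a mollifier-type argument, specifically by constructing a good "resolvent"-type Dirichlet polynomial that approximates $\zeta'(\rho)^{-1}$ and then applying Hölder's inequality cleverly. The starting observation is that on RH with simple zeros, one has a residue/contour identity relating $1/\zeta'(\rho)$ to the Dirichlet series $1/\zeta$: heuristically, $\sum_\rho \frac{x^\rho}{\zeta'(\rho)} \approx -\sum_{n\leqs x}\mu(n)$, and more refined versions let one pair $\zeta'(\rho)^{-1}$ against a partial sum of $\mu(n)n^{-\rho}$. Concretely, I would introduce a Dirichlet polynomial $M(s)=\sum_{n\leqs y}\frac{\mu(n)}{n^s}P(\tfrac{\log y/n}{\log y})$ with a suitable smooth polynomial $P$ (this is the standard mollifier), and use the fact that $\zeta M$ is close to $1$. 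The key point is that $\zeta'(\rho)M(\rho)$ should on average look like a nonzero constant times a power of $\log$, because differentiating the relation $\zeta(s)M(s)\approx 1$ and evaluating near a zero gives $\zeta'(\rho)M(\rho)\approx -\,(\text{something like } \log y\text{-sized})$ from the derivative of the mollified product. So one expects
\[
\sum_{0\leqs\gamma\leqs T} \zeta'(\rho)M(\rho) \sim c\, T\log T \log y,
\]
or a twisted variant thereof.

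The engine that makes this quantitative is the general formula for the discrete twisted second moment stated as promised in the abstract (agreeing with Conrey–Snaith): an asymptotic for
\[
\sum_{0\leqs\gamma\leqs T}\Bigl(\tfrac{m}{n}\Bigr)^{-i\gamma}\Bigl|\zeta'(\rho)\Bigr|^2 \cdot(\text{weights})
\]
—or rather the version needed here, where we weight by a twisted second moment of $\zeta$ near its zeros. From such a formula one extracts, for any Dirichlet polynomial $A(s)=\sum_{n\leqs T^\theta} a_n n^{-s}$ of length up to some admissible power of $T$, an asymptotic evaluation of $\sum_{0\leqs\gamma\leqs T}|\zeta'(\rho)A(\rho)|^2$ and of the cross term $\sum_{0\leqs\gamma\leqs T}\zeta'(\rho)A(\rho)$. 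With these two in hand, the lower bound for $J_{-k}$ follows by a Hölder / Cauchy–Schwarz argument. For the range $0\leqs k\leqs 1$ one can run the argument in one step: write
\[
\Bigl|\sum_{0\leqs\gamma\leqs T}\zeta'(\rho)A(\rho)\Bigr| \leqs \Bigl(\sum_{0\leqs\gamma\leqs T}\frac{1}{|\zeta'(\rho)|^{2k}}\Bigr)^{1/2}\Bigl(\sum_{0\leqs\gamma\leqs T}|\zeta'(\rho)A(\rho)|^{2}|\zeta'(\rho)|^{\frac{2k}{1-k}}\cdots\Bigr)
\]
— more precisely, one interpolates with exponents chosen so that the moment of $\zeta'(\rho)$ that appears on the right is the (positive, second) moment that the twisted-second-moment formula computes, while the left-hand cross term is the main term of size $T(\log T)^{?}\,(\log y)^{?}$. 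Optimizing the mollifier degree and the polynomial $P$ (as in the Conrey–Ghosh–Gonek / Bui–Conrey–Young style of mollifier optimization, or via a Selberg-type extremal problem) produces the exponent $(k-1)^2$ on $\log T$.

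For $k>1$ a single Hölder step is not enough, and I expect the main obstacle to lie exactly there: one must iterate, using the recursive/self-improving structure familiar from Rudnick–Soundararajan and Radziwiłł–Soundararajan lower-bound technology, or the more recent work of Heap–Soundararajan, splitting the mollifier into pieces over dyadic ranges of prime support and controlling each factor by the twisted second moment. The subtlety is that each "resolvent" factor is a short Dirichlet polynomial built from $\mu$ over a prime range $(\exp((\log T)^{\theta_{j-1}}),\exp((\log T)^{\theta_j})]$, and one needs the twisted second moment formula to hold with enough uniformity in the coefficients (and over products of such polynomials) to feed the induction. The other delicate point, present already at the level of the twisted-second-moment formula itself, is handling the contribution of the pole of $\zeta$ and the diagonal versus off-diagonal terms in the explicit-formula expansion of $\sum_\rho (\cdot)$: these produce the secondary main terms that must match the Conrey–Snaith prediction, and getting the arithmetic factor right (the singular series $\prod_p(\cdots)$) is where the real computation lives. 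Once the twisted second moment is established with the needed uniformity, the passage to the lower bound for $J_{-k}(T)$ for all fractional $k\geqs 0$ is a relatively mechanical, if intricate, application of Hölder with an optimized mollifier.
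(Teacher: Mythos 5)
Your high-level framework---approximate $\zeta'(\rho)^{-1}$ by a short Dirichlet polynomial, apply H\"older against a twisted second moment, and optimize a smooth mollifier weight---is the right family of ideas, and you have correctly identified the twisted second-moment formula (Theorem~\ref{twisted moment thm}) as the key technical input. But two choices in your proposal cause the H\"older step to fail, and the fixes are exactly where the paper's argument differs from yours. First, you take the classical M\"obius mollifier $M(s)\approx\zeta(s)^{-1}$, whereas the paper writes $k=a/b$ and uses the \emph{fractional} resolvent $P(s)=\sum_{n\leqs x}\tau_{-1/b}(n)\psi(n)n^{-s}\approx\zeta(s)^{-1/b}$. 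The exponent $-1/b$ is essential: it is what makes the three objects $P(\rho)^a$, $\zeta'(\rho)P(\rho)^{a+b}$, and $\zeta'(\rho)^{-k}$ all comparable to $\zeta(\rho)^{-a/b}$ up to powers of $\log T$, which is the condition for H\"older to be sharp. With $M\approx\zeta^{-1}$ this balance holds only when $b=1$, i.e.\ integer $k$.

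Second, and more seriously, you put the term $\sum_\gamma\zeta'(\rho)A(\rho)$ on the easy side of H\"older and try to peel off $J_{-k}(T)$ from it. Writing $\zeta'(\rho)A(\rho)=\zeta'(\rho)^{-k}\cdot\zeta'(\rho)^{1+k}A(\rho)$ and applying Cauchy--Schwarz produces the moment $\sum_\gamma|\zeta'(\rho)|^{2(1+k)}|A(\rho)|^2$, not the second moment $\sum_\gamma|\zeta'(\rho)|^2|A(\rho)|^2$, and there is no choice of conjugate H\"older exponents that forces the power of $\zeta'$ on the right to be $2$ while simultaneously producing $J_{-k}$ on the left: the system is over-determined. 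Your displayed inequality with the factor $|\zeta'(\rho)|^{2k/(1-k)}$ shows exactly this defect, and the gesture ``one interpolates with exponents chosen so that\ldots'' does not resolve it. The paper sidesteps this by taking the cross term to contain \emph{no} factor of $\zeta'$ at all: $S_1=\sum_\gamma P(\rho)^aP(1-\rho)^a$. Then the trivial factorization $1=|\zeta'(\rho)|^{2a/(a+b)}\cdot|\zeta'(\rho)|^{-2a/(a+b)}$ and H\"older with exponents $\tfrac{a+b}{a},\tfrac{a+b}{b}$ give cleanly
\[
S_1\leqs\Big(\sum_{0\leqs\gamma\leqs T}|\zeta'(\rho)|^2|P(\rho)|^{2(a+b)}\Big)^{\frac{a}{a+b}}\Big(\sum_{0\leqs\gamma\leqs T}\frac{1}{|\zeta'(\rho)|^{2k}}\Big)^{\frac{b}{a+b}},
\]
with an honest second moment on the right. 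Since $2a$ and $2(a+b)$ are even integers for every rational $k=a/b$, this single H\"older step suffices for all fractional $k\geqs 0$; the Rudnick--Soundararajan/Radziwi\l\l--Soundararajan iteration you anticipate for $k>1$ is not needed. One further point: $S_1$ is not evaluated via the twisted second moment at all but via Ng's discrete mean-value formula for Dirichlet polynomials at zeros; Theorem~\ref{twisted moment thm} enters only in the upper bound for $S_2=\sum_\gamma\zeta'(\rho)\zeta'(1-\rho)P(\rho)^{a+b}P(1-\rho)^{a+b}$.
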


In particular, our theorem gives the improved bound
\[
J_{-1/2}(T)\gg T(\log T)^{1/4}.
\]
We recently learned that this lower bound has been obtained independently by Milinovich, Ng and Soundararajan using similar methods. In their method however there is no need to assume that all zeros are simple.

As we detail below, our proof utilises the method of Rudnick and Soundararajan \cite{RS} and shares some similarities with the paper of Chandee and Li \cite{CL} who gave lower bounds for fractional moments of Dirichlet $L$-functions in the $q$-aspect. A general side effect of the method employed by Chandee and Li, which our results also share, is that the implicit constants depend on the height of the rational number $k$. We mention that we give a couple of simplifications to their argument which reduces the length and in some cases allows for an asymptotic evaluation of the multidimensional Mellin integrals which commonly feature in this area (see \cite{KMVK} for instance).

Let us outline the method of proof. Here and throughout, let
\[
k=a/b
\]
with $a,b\in\mathbb{N}$. 
To prove Theorem \ref{main thm} we apply H\"older's inequality in the form 
\begin{equation}\label{holder}
  \sum_{0\leqs \gamma\leqs T} |P(\rho)|^{2a}
  \leqs 
  \Big(\sum_{0\leqs \gamma\leqs T}|\zeta^\prime(\rho)|^2|P(\rho)|^{2(a+b)}\Big)^{\frac{a}{a+b}}
  \Big(\sum_{0\leqs \gamma\leqs T}\frac{1}{|\zeta^\prime(\rho)|^{2k}}\Big)^{\frac{b}{a+b}}.
\end{equation}
where 
\begin{equation}
\label{poly}
P(s)=\sum_{n\leqs x}\frac{\tau_{-1/b}(n)}{n^s}\psi(n),
\end{equation}
and $x=T^{\theta/(a+b)}$ with $\theta<1/2$. Here, $\tau_{\alpha}(n)$ denotes the Dirichlet series coefficients of $\zeta(s)^\alpha$, $\alpha\in\mb{C}$, and $\psi(n)$ is a smoothing weight which will be properly defined later (see formula \eqref{psi choice} below).
 We then have the following two propositions.

\begin{prop}
\label{S_1 prop} Let $P(s)$ be given by \eqref{poly} and let 
\[
S_1:= \sum_{0\leqs \gamma\leqs T} P(\rho)^{a}P(1-\rho)^{a}.
\]
Then for fixed $a,b\in\mathbb{N}$, 
\[
S_1\sim c(a,b) T(\log  T)^{\frac{a^2}{b^2}+1}
\]
for some positive constant $c(a,b)$ as $T\to\infty$.
\end{prop}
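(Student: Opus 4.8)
The plan is to evaluate $S_1$ as a discrete second moment of a Dirichlet polynomial over the zeros of $\zeta$. Write $P(s)^{a}=\sum_{m\leqs x^{a}}\alpha(m)m^{-s}$, where $\alpha$ is the $a$-fold Dirichlet convolution of $n\mapsto\tau_{-1/b}(n)\psi(n)$; thus $\alpha$ is supported on $[1,x^{a}]$, satisfies $\alpha(m)\ll_{\epsilon}m^{\epsilon}$, and for $m\leqs x$ coincides with the untruncated convolution, which models $\tau_{-a/b}(m)$. Put $g(s)=P(s)^{a}P(1-s)^{a}$. Since $P$ has real coefficients, $g(\tfrac12+i\gamma)=|P(\tfrac12+i\gamma)|^{2a}$ on RH, so $S_1$ is exactly the left side of \eqref{holder}, is real and non-negative, and $g(1-s)=g(s)$; expanding, $g(s)=\sum_{m,n\leqs x^{a}}\frac{\alpha(m)\alpha(n)}{n}\bigl(\frac{n}{m}\bigr)^{s}$, an entire function.

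First I would convert the sum over zeros into a contour integral. Since $S_1$ is real and $g(\overline s)=\overline{g(s)}$, the zeros with $|\gamma|\leqs T$ contribute $2S_1$; so with $c=1+1/\log T$ and $T$ chosen away from ordinates of zeros, Cauchy's residue theorem gives
\[
2S_1=\frac{1}{2\pi i}\oint_{\mathcal R}\frac{\zeta'}{\zeta}(s)g(s)\,ds+g(1),
\]
where $\mathcal R$ is the positively oriented rectangle with corners $c\pm iT$, $1-c\pm iT$ (symmetric in the real axis); it encloses those zeros together with the pole of $\zeta'/\zeta$ at $s=1$, whose residue $-g(1)$ accounts for the harmless $g(1)\ll x^{2a+\epsilon}$. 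On RH, $\zeta'/\zeta\ll\log^{2}T$ and $g\ll x^{2a+\epsilon}$ on the horizontal edges, so those contribute $o(T)$; here the hypothesis $\theta<1/2$ is used, as $x^{2a}=T^{2\theta a/(a+b)}\leqs T^{1-\delta}$. On the left edge I apply the functional equation $\frac{\zeta'}{\zeta}(s)=\frac{\chi'}{\chi}(s)-\frac{\zeta'}{\zeta}(1-s)$ together with the substitution $s\mapsto 1-s$; by $g(1-s)=g(s)$ this maps the left edge onto $\Re s=c$ with the same vertical range, and collecting terms yields
\[
S_1=\frac{1}{2\pi i}\int_{c-iT}^{c+iT}\frac{\zeta'}{\zeta}(s)g(s)\,ds+\frac{1}{4\pi i}\int_{c-iT}^{c+iT}\Bigl(-\frac{\chi'}{\chi}(1-s)\Bigr)g(s)\,ds+o(T).
\]
By Stirling, $-\frac{\chi'}{\chi}(1-s)=\log\frac{|t|}{2\pi}+O(1/|t|)$ on $\Re s=c$, which is the source of the extra logarithm; indeed $\frac{1}{4\pi i}\int_{c-iT}^{c+iT}\bigl(-\frac{\chi'}{\chi}(1-s)\bigr)\,ds=N(T)+O(\log T)$.

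Expanding the Dirichlet series on $\Re s=c$ and integrating term by term, only diagonal pairs of indices survive --- with factor $T/\pi$ in the first integral and factor $N(T)$ in the second --- the off-diagonal sums being absolutely convergent and $\ll x^{2a}(\log T)^{O(1)}=o(T)$. Thus, up to $o\bigl(T(\log T)^{a^{2}/b^{2}+1}\bigr)$,
\[
S_1=N(T)\sum_{\ell\leqs x^{a}}\frac{\alpha(\ell)^{2}}{\ell}\;-\;\frac{T}{\pi}\sum_{\ell\leqs x^{a}}\frac{\alpha(\ell)\,(\Lambda\ast\alpha)(\ell)}{\ell}.
\]
(The Landau--Gonek explicit formula, applied to $\sum_{0\leqs\gamma\leqs T}(n/m)^{\rho}$, produces the same two main terms, a diagonal one from the zero-count and a von Mangoldt one, again with errors $\ll x^{2a}(\log T)^{O(1)}=o(T)$ for $\theta<1/2$.) To evaluate the two sums I would write the smoothing $\psi$ through its Mellin transform, so that $P(s)^{a}$ --- and hence each sum --- becomes a multidimensional Mellin integral of a product of shifted factors $\zeta(\,\cdot\,)^{-1/b}$; moving the contours to the poles of these factors (combined pole order $a^{2}/b^{2}$ for the first sum and $a^{2}/b^{2}+1$ for the second, since $\Lambda\ast\alpha$ corresponds to $-\frac{\zeta'}{\zeta}(s)\zeta(s)^{-a/b}=\frac{b}{a}\frac{d}{ds}\zeta(s)^{-a/b}$) produces the powers of $\log x$. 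As $\log x^{a}=\frac{a\theta}{a+b}\log T$, one finds $\sum_{\ell\leqs x^{a}}\frac{\alpha(\ell)^{2}}{\ell}\asymp(\log T)^{a^{2}/b^{2}}$ and $-\sum_{\ell\leqs x^{a}}\frac{\alpha(\ell)(\Lambda\ast\alpha)(\ell)}{\ell}\asymp(\log T)^{a^{2}/b^{2}+1}$, so both main terms are $\asymp T(\log T)^{a^{2}/b^{2}+1}$ and positive, giving $S_1\sim c(a,b)T(\log T)^{a^{2}/b^{2}+1}$ for some $c(a,b)>0$.

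I expect the decisive step to be this last one: the asymptotic evaluation of the two main terms, complicated by the smoothing $\psi$ and by the fact that the $a$-fold convolution $\alpha$ genuinely differs from $\tau_{-a/b}$ on $(x,x^{a}]$. This is the multidimensional Mellin-integral computation referred to in the introduction; the real work is to locate and compute the residues at the relevant poles, assemble the diagonal and von Mangoldt contributions into a single clean constant $c(a,b)$, and check that $c(a,b)>0$ --- transparent here, since each contribution is to leading order a sum of non-negative terms. By comparison, the contour/explicit-formula mechanics, the bounds on the horizontal and bottom edges, and the estimation of the off-diagonal errors are routine once $\theta<1/2$ is assumed and $T$ is chosen away from ordinates of zeros.
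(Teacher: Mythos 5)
Your route is essentially the paper's: you reduce $S_1$ to the Ng-type explicit formula $S_1 = N(T)\sum_{\ell}\alpha(\ell)^2/\ell - (T/\pi)\sum_{\ell}\alpha(\ell)(\Lambda*\alpha)(\ell)/\ell + o(T)$ (the paper simply cites Proposition 4(i) of \cite{Ng}, you rederive it by contour integration and the functional equation, which is the alternative the paper explicitly mentions), and then evaluate the two resulting arithmetic sums by writing the smoothing $\psi$ via its Mellin transform and performing a multidimensional residue computation, exactly as in Section \ref{S1Proof}. The one place your sketch is lighter than the paper is the positivity of $c(a,b)$: the heuristic $(\Lambda*\alpha)(\ell)\approx -\tfrac{b}{a}\alpha(\ell)\log\ell$ is exact only for the untruncated, unsmoothed coefficients $\tau_{-a/b}$, and the paper in fact establishes positivity of the combinatorial constants $\beta(a,b)$ and $\gamma(a,b)$ by the polytope-integral representation borrowed from \cite{BH} — but you correctly flag this as the decisive computation rather than claiming it gratis.
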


\begin{prop}
\label{S_2 prop}
Let $P(s)$ be given by \eqref{poly} and let 
\[
S_2:= \sum_{0\leqs \gamma\leqs T} \zeta^\prime(\rho)\zeta^\prime(1-\rho)P(\rho)^{a+b}P(1-\rho)^{a+b}.
\]
Then for fixed $a,b\in\mathbb{N}$, 
\[
S_2\ll_{a,b} T(\log  T)^{\frac{a^2}{b^2}+3}
\]
as $T\to\infty$.
\end{prop}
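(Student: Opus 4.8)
The plan is to reduce $S_2$ to a sum over zeros that can be attacked by an explicit-formula or contour-integration argument, replacing the factors $\zeta'(\rho)\zeta'(1-\rho)$ with a smoothed version via Cauchy's theorem. Concretely, since $\rho = 1/2 + i\gamma$ sits on the critical line under RH, one writes $\zeta'(\rho)$ as a contour integral of $\zeta(s)$ around $\rho$, or—more efficiently—uses the fact that summing $\zeta'(\rho)\zeta'(1-\rho)P(\rho)^{a+b}P(1-\rho)^{a+b}$ over zeros is, up to admissible error, the same as a contour integral of $\zeta'(s)\zeta'(1-s)/\zeta(s) \cdot P(s)^{a+b}P(1-s)^{a+b}$ (the logarithmic derivative supplying the sum over zeros). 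Shifting contours to lines $\mathrm{Re}(s) = 1/2 \pm c/\log T$ and invoking standard bounds for $\zeta$, $\zeta'$ and $1/\zeta$ near the critical line on RH, the main term becomes a mean value of the shape $\int_0^T \zeta'(\tfrac12+it)\zeta'(\tfrac12-it)|P(\tfrac12+it)|^{2(a+b)}\,dt$, i.e.\ a twisted second moment of $\zeta'$ with a Dirichlet polynomial twist of length $x = T^{\theta/(a+b)}$, $\theta<1/2$.

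The second step is to evaluate (or rather bound from above) that twisted second moment of $\zeta'$. The derivatives can be handled by the usual device of differentiating the twisted second moment of $\zeta$ itself with respect to shift parameters: one starts from an asymptotic formula for $\sum_{m,n} \frac{c_m \bar c_n}{\sqrt{mn}} \int_0^T (m/n)^{-it}\zeta(\tfrac12+\alpha+it)\zeta(\tfrac12+\beta-it)\,dt$ for Dirichlet polynomial coefficients $c_n$ supported up to $x$, then applies $\partial_\alpha\partial_\beta$ and sets $\alpha=\beta=0$. Since $\theta < 1/2$ the length of the twist is well within the range where such twisted second moment formulas are known unconditionally (Balasubramanian–Conrey–Heath-Brown type results), so no conjectural input is needed here; the point of the general twisted second moment formula advertised in the abstract is precisely to make this step clean and to pin down the arithmetic factor.

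The third step is the arithmetic: after the analytic dust settles, $S_2$ is governed by a sum over $m,n\leqslant x$ of $\tau_{-1/b}$-coefficients weighted by $\psi$, of the shape $\sum_{m,n}\frac{\tau_{-1/b}^{\ast(a+b)}(m)\tau_{-1/b}^{\ast(a+b)}(n)}{[m,n]}\psi(\cdots)\times(\text{log-power from the }\partial_\alpha\partial_\beta)$, where $\tau_{-1/b}^{\ast(a+b)}$ is the $(a+b)$-fold Dirichlet convolution, whose Dirichlet series is $\zeta(s)^{-(a+b)/b}$. One evaluates this by writing it as a multiple contour (Mellin) integral, identifying the polar structure: each factor $\zeta(s)^{-(a+b)/b}$ contributes, and combined with the two extra logarithmic derivatives from $\zeta'$ one gets a pole producing $(\log T)^{\text{exponent}}$. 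The exponent should work out to $(a+b)^2/b^2$ from the convolution data plus the two derivatives, but since $(a+b)^2/b^2 = a^2/b^2 + 2a/b + 1 \leqslant a^2/b^2 + 2a/b + 1$ and one only needs an \emph{upper} bound, the crude estimate $(\log T)^{a^2/b^2 + 3}$ follows provided $2a/b + 1 \leqslant 3$ in the relevant regime or, more honestly, by bounding $|\tau_{-1/b}|$ and all coefficients trivially and tracking only the order of the pole; the stated bound has deliberate slack.

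The main obstacle I expect is \textbf{controlling the off-diagonal and error terms} in the passage from the sum over zeros to the smooth integral and in the twisted second moment: the contour shift introduces terms from $1/\zeta(s)$ whose size on RH needs $|\zeta(\sigma+it)|^{-1} \ll t^{\epsilon}$ for $\sigma$ slightly less than $1/2$, which is exactly the kind of input that is delicate and is the reason RH is assumed. Keeping the twist length at $x = T^{\theta/(a+b)}$ with $\theta<1/2$ is what guarantees the error terms in the twisted second moment are $o$ of the main term; one must check the error is still negligible after applying $\partial_\alpha\partial_\beta$ (which can cost logarithmic factors). Everything else—the combinatorics of the Dirichlet convolutions, the Mellin integral, extracting the leading log-power—is routine bookkeeping of the type already present in Rudnick–Soundararajan \cite{RS} and Chandee–Li \cite{CL}, and since only an upper bound of a convenient shape is required, one can afford to be wasteful at each stage.
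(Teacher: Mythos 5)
The broad outline you sketch---contour integral with $\zeta'/\zeta$ picking out the zeros, shifts $\alpha,\beta$ and $\partial_\alpha\partial_\beta$ to handle the derivatives, and multidimensional Mellin analysis of the arithmetic sums---is indeed the route the paper takes. But there are two genuine gaps that would sink the argument as written.

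First, you claim that after the contour shift the main contribution is essentially the continuous integral $\int_0^T\zeta'(\tfrac12+it)\zeta'(\tfrac12-it)|P(\tfrac12+it)|^{2(a+b)}\,dt$. That is not the whole story. The discrete sum over zeros produces, in addition to the $\chi'/\chi$ piece (which gives the continuous-integral-like term $\mc{J}$ in the paper's Theorem \ref{twisted moment thm}), two further terms $\mc{I}(\alpha,\beta,T)$ and $\overline{\mc{I}(\bar\beta,\bar\alpha,T)}$ coming from the $\zeta'(1\mp s)/\zeta(1\mp s)$ piece of the functional equation. These require a separate mechanism (in the paper, Perron's formula on a twisted divisor sum, giving the $Z$-terms in \eqref{Z}) and have their own poles at $s=1-\alpha-\beta$ and $s=1-\beta-\gamma$; they are not visible if you only track the continuous twisted second moment. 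You cannot simply discard them, and their analysis ($S_{22}$ in Section \ref{S2Proof}) is where most of the technical work lies.

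Second, and more seriously, your accounting of the log-power is wrong in a way that would invalidate the final result. You observe that the $(a+b)$-fold convolution of $\tau_{-1/b}$ naively contributes $(\log T)^{(a+b)^2/b^2}=(\log T)^{k^2+2k+1}$, and with the two derivatives $(\log T)^{k^2+2k+3}$, and you then wave this away by noting ``one only needs an upper bound'' with ``deliberate slack''. But there is no slack: the bound in the proposition is exactly what is needed for Hölder to yield $J_{-k}(T)\gg T(\log T)^{(k-1)^2}$, and replacing $k^2+3$ by $k^2+2k+3$ in the exponent would make the Hölder step produce a negative power of $\log T$ and the theorem would fail. The crucial point the paper exploits, which your proposal misses, is a cancellation: the coefficient sum $F(x,z_1,z_2)$ (resp.\ $G(x,\ul z)$) is not of size $(\log T)^{(a+b)^2/b^2}$ but of size $(\log T)^{(a+b)^2/b^2-2(a+b)/b}=(\log T)^{k^2-1}$ (resp.\ $(\log T)^{k^2}$), because the Euler-product factorisation \eqref{F euler} (resp.\ \eqref{mc G prod}) has $2(a+b)$ factors of $\zeta(1+s_j\pm z_i)^{1/b}$ in the \emph{denominator}, each of which vanishes to order $1/b$ when $s_j\to 0$ and $z_i\to 0$. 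Extracting this cancellation is precisely the content of the estimates \eqref{inverse zeta bound}--\eqref{mcF bound} and \eqref{G laurent}; without it, the method you describe gives an exponent too large by $2k$, and the whole strategy of the paper collapses. So the ``routine bookkeeping'' you defer to the end is, in fact, the heart of the proof.
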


Note these propositions are unconditional. 
Assuming RH the sums $S_1$ and $S_2$ become those in H\"older's inequality \eqref{holder} giving
\[J_{-k}(T)\geqs \frac{S_1^{k+1}}{S_2^k}\gg  T \frac{(\log T)^{(k+1)(k^2+1)}}{(\log T)^{k(k^2+3)}}=T(\log T)^{(k-1)^2}\]
and Theorem \ref{main thm} follows.

In  proving Proposition \ref{S_1 prop} we apply a result of Ng \cite{Ng} which deals with sums of the $S_1$-type when $P(s)$ is a fairly general Dirichlet polynomial. We have not been able to find such a general formula for sums of the $S_2$-type in the literature, however these sums have been dealt with in specific cases \cite{HBB,CGG, CGGSimple, Ng1}. Following their methods, we derive the following general result.

\begin{thm} 
\label{twisted moment thm}
Let $\alpha,\beta \ll 1/\log T$ be sufficiently small shifts. Let $Q(s)=\sum_{n\leqs y}a(n)n^{-s}$ with $y=T^\theta$ and $\theta<1/2$ and denote $\ol{Q}(s)=\sum_{n\leqs y}\ol{a(n)}n^{-s}$. Suppose that $a(mn)\ll |a(m)a(n)|$ and $a(n)\ll \tau_r(n)(\log n)^C$.   Denote
\begin{equation}\label{f}
f_{\alpha,\gamma}(n)=\sum_{n_1n_2n_3=n}\mu(n_1)n_2^{-\alpha}n_3^{-\gamma},
\end{equation}
and let
\ben
\label{Z}
    Z_{\alpha,\beta,\gamma,h,k}
  =\frac{1}{h^\beta}
    \frac{\zeta(1+\alpha+\beta)\zeta(1+\beta+\gamma)}{\zeta(1+\beta)}
     \prod_{p^{k_p} || k}\frac{\sum_{m\geqs 0}f_{\alpha,\gamma}(p^{m+k_p})p^{-m(1+\beta)}}{\sum_{m\geqs 0}f_{\alpha,\gamma}(p^{m})p^{-m(1+\beta)}}.
\een
Furthermore, let 
\begin{multline}
\label{mc I}
\mc{I}(\alpha,\beta,T)=\sum_{g\leqs y}\sum_{\substack{h,k\leqs y/g\\(h,k)=1}}\frac{a(gh)\ol{a(gk)}}{ghk}
               \frac{d}{d\gamma}
              \frac{1}{2\pi} \int_0^T\bigg[ 
               Z_{\alpha,\beta,\gamma,h,k}
     \\  + \bigg(\frac{t}{2\pi}\bigg)^{-\alpha-\beta}Z_{-\beta,-\alpha,\gamma,h,k}
      + \bigg(\frac{t}{2\pi}\bigg)^{-\beta-\gamma}Z_{\alpha,-\gamma,-\beta,h,k}
     \bigg]dt
     \bigg|_{\gamma=0}
\end{multline}
and
\begin{multline}
\label{mc J}
          \mc{J}(\alpha,\beta,T)
          =
            \sum_{g\leqs y}\sum_{\substack{h,k\leqs y/g\\(h,k)=1}}\frac{a(gh)\ol{a(gk)}}{ghk}
              \frac{1}{2\pi}\int_0^T\log\bigg(\frac{t}{2\pi}\bigg)
              \\\times\bigg[
              \frac{\zeta(1+\alpha+\beta)}{h^{\beta}k^{\alpha}}
      + \bigg(\frac{t}{2\pi}\bigg)^{-\alpha-\beta}
             \frac{\zeta(1-\alpha-\beta)}{h^{-\alpha}k^{-\beta}}
             \bigg]dt.
\end{multline}
Then
\begin{multline}
\label{twisted moment eq}
          \sum_{0\leqs \gamma\leqs T}\zeta(\rho+\alpha)\zeta(1-\rho+\beta)Q(\rho)\ol{Q}(1-\rho)\\
  =  \mc{I}(\alpha,\beta,T)+\ol{\mc{I}(\ol{\beta},\ol{\alpha},T)}+\mc{J}(\alpha,\beta,T)+(T(\log T)^{-A})
\end{multline}
where $A$ is an arbitrary positive constant.  
\end{thm}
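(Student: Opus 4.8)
The plan is to start from the standard contour-integral representation of the sum over zeros. By the argument principle (or the explicit formula), one has
\[
\sum_{0\leqs \gamma\leqs T}\zeta(\rho+\alpha)\zeta(1-\rho+\beta)Q(\rho)\ol{Q}(1-\rho)
= \frac{1}{2\pi i}\oint \frac{\zeta'}{\zeta}(s)\,\zeta(s+\alpha)\zeta(1-s+\beta)Q(s)\ol{Q}(1-s)\,ds,
\]
where the contour is a positively oriented rectangle with vertices near $1+\delta$, $1+\delta+iT$, $-\delta+iT$, $-\delta$ for a small $\delta\asymp 1/\log T$. First I would dispose of the horizontal segments and the contribution from low height using standard zero-density and convexity bounds, so that only the two vertical segments at $\Re s = 1+\delta$ and $\Re s = -\delta$ remain. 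On the right edge I expand $\zeta'/\zeta$, $\zeta(s+\alpha)$, $Q(s)$ and $\ol{Q}(1-s)$ into their Dirichlet series; the main term comes from the diagonal, and after writing $\Re s = 1+\delta$ and moving to $\Re s = \tfrac12$ the off-diagonal terms are negligible (here is where $\theta<1/2$ enters, guaranteeing the length of $Q\ol Q$ is short enough). This produces the piece built from $\zeta(1+\alpha+\beta)/(h^\beta k^\alpha)$ together with the logarithmic weight coming from differentiating $\zeta'/\zeta$-type factors — this will become $\mc{J}(\alpha,\beta,T)$ and, after applying the functional equation to handle the left edge, its "swapped" counterpart with $(t/2\pi)^{-\alpha-\beta}\zeta(1-\alpha-\beta)$.

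The more delicate contribution is from the left vertical segment $\Re s = -\delta$. Here I would apply the functional equation $\zeta(s) = \chi(s)\zeta(1-s)$ to both $\zeta(s+\alpha)$ and $\zeta(1-s+\beta)$, and also use $\zeta'/\zeta(s) = -\log(t/2\pi)+O(1) + (\text{reflected term})$. After the functional equation the integrand becomes a product of $\chi$-factors producing the archimedean weights $(t/2\pi)^{-\alpha-\beta}$, $(t/2\pi)^{-\beta-\gamma}$ (with $\gamma$ the auxiliary shift introduced in \eqref{f}, \eqref{Z}), times Dirichlet series in $1-s$ which can again be expanded and summed over the diagonal. The arithmetic factor that emerges is exactly the Euler product $\prod_{p^{k_p}\|k}(\cdots)$ in \eqref{Z}: it arises from resolving the gcd conditions $(h,k)=1$ after pulling out the common factor $g$, and the presence of $f_{\alpha,\gamma}$ reflects the Dirichlet coefficients of $\zeta(s+\alpha)\zeta(s+\gamma)/\zeta(s)$. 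Introducing the formal variable $\gamma$ and then setting $\frac{d}{d\gamma}\big|_{\gamma=0}$ is the device that reproduces the $\log$-weight from $\zeta'/\zeta$ while keeping the main term in a clean closed form — this is the simplification over \cite{CL} alluded to in the introduction, and I would carry it out by recognizing $-\frac{d}{d\gamma}\zeta(s+\gamma)\big|_{\gamma=0} = \zeta'(s)$ inside the integral.

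The key steps, in order, are: (i) set up the contour integral and discard horizontal/low-height pieces; (ii) on the right edge, expand into Dirichlet series, extract the diagonal, bound off-diagonal terms using $\theta<1/2$ and the hypotheses $a(mn)\ll|a(m)a(n)|$, $a(n)\ll\tau_r(n)(\log n)^C$; (iii) on the left edge, apply the functional equation to convert $\chi$-factors into the archimedean weights $(t/2\pi)^{-\alpha-\beta}$ and $(t/2\pi)^{-\beta-\gamma}$, then again reduce to a diagonal sum; (iv) perform the local (Euler-product) computation to identify the arithmetic factor $Z_{\alpha,\beta,\gamma,h,k}$, including the gcd bookkeeping that produces the product over $p^{k_p}\|k$; (v) reassemble, apply $\frac{d}{d\gamma}\big|_{\gamma=0}$, and match terms to $\mc{I}$, $\ol{\mc I}$ and $\mc J$. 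I expect step (iv) — the precise Euler-product bookkeeping that turns the summation conditions $g\leqs y$, $h,k\leqs y/g$, $(h,k)=1$ into the stated local factors with $f_{\alpha,\gamma}$ — to be the main obstacle, since it requires carefully tracking how the shifts $\alpha,\beta,\gamma$ interact with the divisor structure of $k$; the error term $O(T(\log T)^{-A})$ will follow from a standard but somewhat technical estimation of the tails and off-diagonal contributions, using that $Q$ has length $T^\theta$ with $\theta<1/2$.
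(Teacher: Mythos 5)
Your overall skeleton is right: you set up the integral over the rectangle, use the functional equation $\zeta'/\zeta(s)=\chi'/\chi(s)-\zeta'/\zeta(1-s)$ to fold the left edge into $\ol{\mc I}$ and $\mc J$, introduce the auxiliary shift $\gamma$ so that $\zeta'/\zeta$ becomes $\frac{d}{d\gamma}\zeta(s+\gamma)/\zeta(s)|_{\gamma=0}$, and identify the local Euler factors $Z_{\alpha,\beta,\gamma,h,k}$ by a gcd computation. All of these match the paper. However, there are two substantive gaps where your proposal either misdescribes the mechanism or does not supply the idea at all.

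First, the "diagonal vs.\ off-diagonal" step on the vertical edge is not how the main term is actually extracted here, and describing it that way hides where the real work is. The paper applies the functional equation $\zeta(1-s+\beta)=\chi(s-\beta)\zeta(s-\beta)$ inside $\mc K$ and then invokes a lemma of Gonek/Conrey--Ghosh--Gonek (Lemma \ref{Integral I}) to evaluate $\frac{1}{2\pi i}\int \chi(1-s)A(1-s)B(s)ds$ as a finite sum $\sum_{k\leqs y}\frac{a(k)}{k}\sum_{m\leqs Tk/2\pi}b(m)e(-m/k)$ involving additive characters $e(-m/k)$, not a diagonal in the usual sense. The main term then comes from writing $e(-m/k)$ in terms of multiplicative characters and isolating the Ramanujan-sum piece $\mu(k')/\phi(k')$; Perron's formula plus a residue computation then produces the three $Z$ terms, with the weights $(t/2\pi)^{-\alpha-\beta}$ and $(t/2\pi)^{-\beta-\gamma}$ arising as residues at $s=1-\alpha-\beta$ and $s=1-\beta-\gamma$, not from $\chi$-factors on the left edge as your outline suggests.

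Second, and more seriously, your claim that "the off-diagonal terms are negligible (here is where $\theta<1/2$ enters)" and that the error term "will follow from a standard but somewhat technical estimation" does not produce the stated error $O(T(\log T)^{-A})$ unconditionally. The entire content of Sections 5--6 of the paper is needed here: the non-principal character contribution $\mc E$ must be split by conductor size, with Siegel's theorem handling small moduli and, for large moduli, Heath-Brown's identity applied to $\mu$ to decompose $b(n)$ into Type I/Type II pieces before the large sieve and P\'olya--Vinogradov can be used. Without this decomposition, a straightforward bound cannot reach $\theta$ up to $1/2$ unconditionally; the earlier literature your plan would naturally reproduce needed GRH or Lindel\"of precisely because it lacked this step. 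So the proposal is missing a key idea rather than merely compressing routine estimates.
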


\begin{rems}
${}$

$\bullet$ We note that this result is unconditional. Initially, results of this type required the assumption of GRH \cite{CGG} to analyse the error term. By an application of the large sieve, this condition was later weakened to the Generalised Lindel\"of hypothesis by Conrey, Ghosh and Gonek \cite{CGGSimple} and was finally made unconditional by Bui and Heath-Brown \cite{HBB} utilising Heath-Brown's identity. We follow the latter method when analysing our error terms. 

$\bullet$ If one assumes GRH then one can allow for general complex coefficients satisfying $a(n)\ll n^\epsilon$. In this case the the error term is replaced by $O(y^{3/2+\epsilon}T^{1/2+\epsilon})$, and so in this setting one has an asymptotic formula  provided $\theta<1/3$.

$\bullet$ Our main term takes the form as predicted by the recipe method/ratios conjecture \cite{CFKRS,CS}. Indeed, the $Z$ terms can be realised as a diagonal sum (see Lemma \ref{diag lem} below).

$\bullet$ In terms of applications, the form stated in this theorem is not ideal. The polar behaviour of the integrand and the arithmetic sums can be a little troublesome. We have refrained from stating a more `applicable' version of the theorem (as in Theorem 1.2 of \cite{BBLR}) however all the necessary ingredients for applications can be found in Section \ref{S2Proof} (see formulas \eqref{contour int rep}, \eqref{contour int rep 2} and Lemma \ref{diag lem} below).  

\end{rems}

Before moving on to the proofs we give a brief heuristic justification for the choice of Dirichlet polynomial $P(s)$ in \eqref{poly} since we could not find this elswhere in the literature. In order for H\"older's inequality to be sharp we need the summands to be approximately equal. Unfortunately, $1/\zeta(s)$ has no representation in terms of a Dirichlet series and so there is no obvious choice for a polynomial approximation. However, we expect that our mean values will not change too much if we shift away from the half-line slightly. We also expect that in this region $\zeta^\prime(s)\approx (\log T) \zeta(s)$, at least on average. Applying these two principles the right hand side of \eqref{holder} becomes
 \[
 \Big(\sum_{0\leqs \gamma\leqs T}|\zeta(\rho+\delta)|^2|P(\rho+\delta)|^{2(a+b)}\Big)^{\frac{a}{a+b}}
  \Big(\sum_{0\leqs \gamma\leqs T}\frac{1}{|\zeta(\rho+\delta)|^{2k}}\Big)^{\frac{b}{a+b}}
\]
and notice that the powers of $\log T$ cancel by homogeniety. We may now set our summands equal:
\[
\zeta(s)^2P(s)^{2(a+b)}\approx \frac{1}{\zeta(s)^{2a/b}},
\]
and find that we should take $P(s)\approx \zeta(s)^{-\frac{1}{b}}$. 

The paper is organized as follows. In Section \ref{S1Proof} we prove Proposition \ref{S_1 prop}. In section \ref{S2Proof} we prove Proposition \ref{S_2 prop} assuming Theorem \ref{twisted moment thm}. The remainder of the paper is then devoted to proving Theorem \ref{twisted moment thm}. In Section \ref{Theorem2Proof}, we compute the main term and bound the error term using two propositions: Propositions \ref{E1} and \ref{E2}. Then in Section \ref{SmallmoduliProof} we prove Proposition \ref{E1}, and in Section \ref{LargemoduliPropProof} we prove Proposition \ref{E2}.


\section{Proof of Proposition \ref{S_1 prop}}\label{S1Proof}
We first choose the weight function $\psi(n)$ in \eqref{poly}. Let $B$ be a positive integer and let
\begin{equation}
\label{psi choice}
\psi(n)=\mathds{1}_{n\leqs x}\bigg(\frac{\log(x/n)}{\log x}\bigg)^B.
\end{equation}
We will need to take $B$ sufficiently large in terms of $k$ at several points throughout the paper, so for the moment we keep it general. To be clear, in the end we will choose 
\[
B=14+12k
\] 
but for the purposes of this section we only require $B\geqs 1$. 
Note that by the Mellin inversion formula (or simply by a residue computation) we have 
\begin{equation}
\label{mellin}
\psi(n)=\frac{B!}{2\pi i(\log x)^B  }\int_{(c)}\bigg(\frac{x}{n}\bigg)^s\frac{ds}{s^{B+1}}
\end{equation}
for $c>0$ where, here and throughout, $\int_{(c)}=\int_{c-i\infty}^{c+i\infty}$.

We first write
\[
R(s)=P(s)^a=\sum_{n\leqs x^a}\frac{r(n,x)}{n^s}
\]
with
\[
r(n,x)=r_{a,b}(n,x)=\sum_{\substack{n_1\cdots n_a=n \\ n_j\leqs x}}\tau_{-1/b}(n_1)\psi(n_1)\cdots \tau_{-1/b}(n_a)\psi(n_a).
\]
Then 
\[
S_1=\sum_{0\leqs \gamma\leqs T}R(\rho)R(1-\rho).
\]
The mean value $S_1$ can be computed in a familiar manner; either by writing it as a contour integral or by, what amounts to the same thing, applying Gonek's uniform version Landau's formula.
These details have been carried out by Ng \cite{Ng} for a fairly general class of Dirichlet polynomial. By applying Proposition 4, (i) of \cite{Ng} we find that 
\begin{equation}
\begin{split}
\label{Ngs}
        S_1
    = & N(T)\sum_{n\leqs x^a}\frac{r(n,x)^2}{n}
     -\frac{T}{\pi}\sum_{\ell m=n\leqs x^a}\frac{\Lambda(\ell)r(m,x)r(n,x)}{n}+o(T)
\end{split}
\end{equation}
where $N(T)=\frac{T}{2\pi}\log(T/2\pi e)+O(\log T)$ is the number of zeros of $\zeta(s)$ in the strip $0<\sigma<1$, $0\leqs t\leqs T$. 

Denote
\[
S_{11}=\sum_{n\leqs x^a}\frac{r(n,x)^2}{n}
\]
and 
\[
S_{12}=\sum_{\ell m=n\leqs x^a}\frac{\Lambda(\ell)r(m,x)r(n,x)}{n}.
\]
In this section we will show that
\[S_{11}\sim c_1(a,b)(
\log T)^{a^2/b^2}  \]
and that 
\[S_{12}\sim c_2(a,b)(
\log T)^{a^2/b^2+1}  \]
for some explicit constants $c_1(a,b)$, $c_2(a,b)$. We then show that $\tfrac{1}{2}c_1(a,b)-c_2(a,b)>0$ and hence 
$S_1\sim c_3(a,b)T(\log T)^{a^2/b^2+1}$ for some positive constant $c_3(a,b)$. The result will then follow. 
We compute $S_{11}$ first.

\subsection{Computing $S_{11}$}
Unfolding the sum we have 
 
\[
 S_{11}=\sum_{\substack{n_1\cdots n_a=\\n_{a+1}\cdots n_{2a}\\ n_j\leqs x}}\frac{\tau_{-1/b}(n_1)\cdots \tau_{-1/b}(n_{2a})}{(n_1\cdots n_{2a})^{1/2}} \psi(n_1)\cdots\psi(n_{2a})
\]
By applying the Mellin inversion formula \eqref{mellin} in each $n_j$ and interchanging the order of summation and integration we get 
\[
S_{11}=\frac{B!^{2a}}{(2\pi i)^{2a}( \log x)^{2aB}}\int_{(c)^{2a}}
\sum_{\substack{n_1\cdots n_a=\\n_{a+1}\cdots n_{2a}}}
\frac{\tau_{-1/b}(n_1)\cdots \tau_{-1/b}(n_{2a})}{n_1^{1/2+s_1}\cdots n_{2a}^{1/2+s_{2a}}}
 \prod_{\ell=1}^{2a}x^{s_\ell}\frac{ds_\ell}{s_\ell^{B+1}},  
\]
where we have taken $c={ 1 }/{\log x}$. Here, we use the notation
\[
\int_{(c)^{2a}}=\underbrace{\int_{(c)}\cdots\int_{(c)}}_{2a}.
\]
After a short calculation with Euler products we find that the Dirichlet series in the integrand is given by 
\[
 \mc{A}(\ul{s}) \prod_{i,j=1}^a\zeta(1+s_i+s_{a+j})^{1/b^2} 
\]
where 
\begin{equation}
\label{A(s)}
  \mc{A}(\ul{s}) = \prod_p\prod_{i,j=1}^a\left( 1 - \frac{ 1 }{ p^{1+s_i+s_{a+j}} } \right)^{1/b^2}\sum_{ \substack{ m_1+\cdots+m_a=\\ m_{a+1}+\cdots+m_{2a}\\m_j\geqslant 0 } } \frac{ \tau_{-1/b}(p^{m_1})\cdots \tau_{-1/b}(p^{m_{2a}}) }{ p^{ m_1(\frac{1}{2}+s_1 )+\cdots+m_{2a}( \frac{1}{2}+s_{2a} ) } }.
\end{equation}
Note that $A(\ul{s})$ is holomorphic in the region $\sigma_j>-1/4$, $j=1,\ldots,2a$, since it is absolutely convergent there. 

We will reproduce the following argument several times throughout the paper so we take this opportunity to briefly describe the steps and give some justification. Note that the integrand has fractional powers of $\zeta(s)$. This coupled with the fact that we have a multidimensional integral means that shifting contours would be very messy. However, note that the integrand is largest when we simultaneously have  $\Im(s_j)\approx 0$. We can therefore localise our integral around these points, expand the integrand in Taylor/Laurent series, and then extract the main term via the substitution $s_j\mapsto s_j/\log x$. The remaining integral then gives a combinatorial constant which we can compute as the weighted volume of a polytope using a trick from \cite{BH} (see also \cite{HL}). This method can essentially be thought of as a multidimensional version of the saddle point method, although in our case it is fairly easy to see where the saddle/main contribution is.

In practice it is simpler if we make the substitutions first, so let $s_j\mapsto s_j/\log x$ for each $j$. Then $ S_{11}$ becomes
\[
 S_{11}=\frac{B!^{2a}}{(2\pi i)^{2a}}\int_{(1)^{2a}}
  \mc{A}(\frac{\ul{s}}{\log x}) 
 \prod_{i,j=1}^a\zeta\left(1+\frac{s_i+s_{a+j}}{\log x}\right)^{1/b^2} 
 \prod_{\ell=1}^{2a}e^{s_\ell}\frac{ds_\ell}{s_\ell^{B+1}},  
\]
Let us localise the integral. For each $j$ we split the integral at the points $t_j=\Im(s_j)=\pm\sqrt{\log x}$; the main contribution will come from the integral over the region $s_j\in [1 - i\sqrt{\log x}, 1 + i\sqrt{\log x}]$. To estimate the tail integrals we use the bound
\[
 \mc{A}(\ul{s}/\log x)\prod_{i,j=1}^a \zeta\left(1+\frac{s_i+s_{a+j}}{\log x}\right)^{1/b^2}\ll (\log x)^{a^2/b^2}
\]
valid for $s_j=1+it_j$ uniformly in $t_j\in\mathbb{R}$. Then,
\begin{align*}
 \int_{1+i\sqrt{\log x}}^{1+i\infty} & \int_{(1)^{2a-1}}
  \mc{A}(\frac{\ul{s}}{\log x}) \prod_{i,j=1}^a\zeta\left(1+\frac{s_i+s_{a+j}}{\log x}\right)^{1/b^2} \prod_{\ell=1}^{2a}e^{s_\ell}\frac{ds_\ell}{s_\ell^{B+1}}
\\
\ll &
(\log x)^{a^2/b^2}\int_{1+i\sqrt{\log x}}^{1+i\infty}\frac{ds}{|s|^{B+1}}\ll (\log x)^{a^2/b^2-1/2}
\end{align*}
by absolute convergence. Naturally, the tail integrals in the lower half plane satisfy the same bound as do those with respect to the other integration variables. Note that the smooth weights $\psi(n)$ have made the task of estimating these tails significantly easier compared to the case of the usual Perron's formula. 

Collecting the errors gives
\begin{multline*}
S_{11}
=
\frac{B!^{2a}}{(2\pi i)^{2a}}\int_{1-i\sqrt{\log x}}^{1+i\sqrt{\log x}} \!\!\!\cdots \!\!\int_{1-i\sqrt{\log x}}^{1+i\sqrt{\log x}}
 \mc{A}(\frac{\ul{s}}{\log x}) \prod_{i,j=1}^a\zeta\left(1+\frac{s_i+s_{a+j}}{\log x}\right)^{1/b^2} \prod_{\ell=1}^{2a}e^{s_\ell}\frac{ds_\ell}{s_\ell^{B+1}}
\\
+O\big( (\log x)^{a^2/b^2-1/2}\big).
\end{multline*}
In this region of integration we have the expansions
\begin{equation}
\label{A taylor}
 \mc{A}(\ul{s}/\log x)= \mc{A}(\ul{0})+O\big(\frac{1}{\log x}\sum_{j} |s_j|\big)= \mc{A}(\ul{0})+O\big(\frac{1}{\sqrt{\log x}}\big)
\end{equation}
and 
\begin{equation}
\label{zeta laurent}
\zeta\left(1+\frac{s_i+s_{a+j}}{\log x}\right)^{1/b^2}=\frac{(\log x)^{1/b^2}}{(s_i+s_{a+j})^{1/b^2}}\Big(1+O\big(\frac{1}{\sqrt{\log x}}\big)\Big).
\end{equation}
Therefore,
\begin{multline*}
 S_{11} =  \frac{ \mc{A}(\ul{0})B!^{2a}\left(\log x\right)^{a^2/b^2}}{(2\pi i )^{2a}} \int_{1-i\sqrt{\log x}}^{1+i\sqrt{\log x}} \!\!\!\cdots \int_{1-i\sqrt{\log x}}^{1+i\sqrt{\log x}} \prod_{i,j=1}^a\frac{1}{(s_i+s_{a+j})^{1/b^2}} 
 \prod_{\ell=1}^{2a}e^{s_\ell}\frac{ds_\ell}{s_\ell^{B+1}}
\\  +O\big( (\log x)^{a^2/b^2-1/2}\big).
\end{multline*}
On extending any given integral to $i\infty$ we acquire a multiplicative error of $O((\log x)^{-B/2})$ which leads to a total contribution of size $O((\log x)^{a^2/b^2-1/2})$. Therefore, we acquire the following asymptotic formula, 
\begin{equation}
\label{S_11 asymp}
S_{11}=  \mc{A}(\ul{0}) \beta(a,b) (\log x)^{a^2/b^2}+O\big( (\log x)^{a^2/b^2-1/2}\big)
\end{equation}
where
\[
\beta(a,b)=
 \frac{B!^{2a} }{(2\pi i )^{2a}} \int_{(1)^{2a}} \prod_{i,j=1}^a\frac{1}{(s_i+s_{a+j})^{1/b^2}} 
 \prod_{\ell=1}^{2a}e^{s_\ell}\frac{ds_\ell}{s_\ell^{B+1}}.
\]
We postpone the computation of these constants to subsection \ref{constants sec}.

\subsection{Computing $S_{12}$}
Recall
\[
S_{12} = \sum_{\ell m=n\leqs x^a}\frac{\Lambda(\ell)r(m,x)r(n,x)}{n}.
\]
In order to have multiplicative coefficients we write  
$
\Lambda(n) = \sum_{n_1n_2=n}\mu(n_1)\log n_2 = \frac{d}{d\gamma} \sum_{n_1n_2=n}\mu(n_1)n_2^{\gamma} |_{\gamma=0} . 
$
Then unfolding the coefficients $r(n,x)$ and applying this gives
\begin{align*}
S_{12}
            & = \sum_{\substack{\ell n_1\cdots n_a\\=n_{a+1}\cdots n_{2a}\\ n_j\leqslant x}} \frac{\Lambda(\ell)\tau_{-1/b}(n_1)\cdots \tau_{-1/b}(n_{2a})}{ (\ell n_1\cdots n_{2a})^{1/2}} \cdot \psi(n_1)\cdots\psi(n_{2a}) \\
            & = \frac{d}{d\gamma} \bigg( \sum_{\substack{\ell_1\ell_2 n_1\cdots n_a\\=n_{a+1}\cdots n_{2a}\\ n_j\leqslant x}}  \frac{\mu(\ell_1)  \tau_{-1/b}(n_1) \cdots \tau_{-1/b}(n_{2a}) }{ \ell_2^{1/2-\gamma}(\ell_1n_1\cdots n_{2a})^{1/2}} \cdot \psi(n_1)\cdots\psi(n_{2a})  \bigg)\bigg|_{\gamma=0}.
\end{align*}


As before, we apply Mellin inversion \eqref{mellin} to find
\begin{align*}
S_{12} =
&\frac{d}{d\gamma} \frac{B!^{2a}}{(2\pi i )^{2a}(\log x)^{2aB}}\int_{(c)^{2a}} 
 \sum_{\substack{\ell_1\ell_2 n_1\cdots n_a\\=n_{a+1}\cdots n_{2a}}} 
 \frac{ \mu(\ell_1)\tau_{-1/b}(n_1) \cdots \tau_{-1/b}(n_{2a}) }{ \ell_1^{1/2}\ell_2^{1/2-\gamma}n_1^{\frac{1}{2}+s_1} \cdots n_{2a}^{\frac{1}{2}+s_{2a}} }  
 \prod_{j=1}^{2a}x^{s_j}\frac{ds_j}{s_j^{B+1}} \bigg|_{\gamma=0}.
\end{align*}
Now a short calculation shows that the Dirichlet series in the integrand is given by 
\[
\mc{B}(\ul s,\gamma)\frac{ \prod_{i,j=1}^a \zeta(1+s_i+s_{a+j})^{1/b^2}\prod_{j=1}^a \zeta(1+s_{a+j})^{1/b} }{ \prod_{j=1}^a \zeta(1+s_{a+j}-\gamma)^{1/b} }
\]
where 
\begin{align*}
\mc{B}(\ul s, \gamma) = &\prod_p\frac{\prod_{i,j=1}^a\left( 1 - \frac{ 1 }{ p^{1+s_i+s_{a+j}} } \right)^{1/b^2}\prod_{j=1}^a\left( 1 - \frac{ 1 }{ p^{1+s_{a+j}} } \right)^{1/b} }{\prod_{j=1}^a\left( 1 - \frac{ 1 }{ p^{1+s_{a+j}-\gamma} } \right)^{1/b} }\\
                               &\qquad\qquad\qquad\qquad\qquad \times\sum_{ \substack{ \ell_1+\ell_2+m_1+\cdots+m_a\\= m_{a+1}+\cdots+m_{2a}\\m_j\geqslant 0 } } \frac{ \mu(p^{\ell_1})\tau_{-1/b}(p^{m_1})\cdots \tau_{-1/b}(p^{m_{2a}}) }{ p^{ \ell_1+\ell_2-\gamma+m_1(\frac{1}{2}+s_1 )+\cdots+m_{2a}( \frac{1}{2}+s_{2a} ) } }
\end{align*}
is the corresponding holomorphic factor. Again, this is easily seen to be holomorphic in the region $\sigma_j>-1/4$, $j=1,\ldots, 2a$. Then, taking the derivative inside the integral we get 
\begin{align*}
S_{12} = & \frac{B!^{2a}}{(2\pi i)^{2a}(\log x)^{2aB}}\int_{(c)^{2a}}
 \bigg[\mc{B}'(\ul{s},0)\prod_{i,j=1}^a \zeta(1+s_i+s_{a+j})^{1/b^2} \\
               &\qquad\qquad\qquad + \mc{B}(\ul s,0)\prod_{i,j=1}^a \zeta(1+s_i+s_{a+j})^{1/b^2}\frac{1}{b} \sum_{j=1}^a \frac{ \zeta'(1+s_{a+j}) }{ \zeta(1+s_{a+j}) }  
               \bigg]
                \prod_{\ell=1}^{2a}x^{s_\ell}\frac{ds_\ell}{s_\ell^{B+1}}.
\end{align*}

Now, the first integral can be treated as in the previous subsection (the only difference being the arithmetic factor $\mc{B}^\prime(\ul{s},0)$ which is of no real consequence). In this way we find it is $O((\log x)^{a^2/b^2})$. In the remaining integral we first note that $\mc{B}(\ul s,0)=\mc{A}(\ul s)$ and then let $s_j\mapsto s_j/\log x$ for each $j$ to give  
\begin{multline*}
S_{12} =  
 \frac{B!^{2a}}{(2\pi i)^{2a}}\int_{(1)^{2a}}\mc{A}(\frac{\ul s}{\log x})\prod_{i,j=1}^a \zeta(1+\frac{s_i+s_{a+j}}{\log x} )^{1/b^2}\frac{1}{b} \sum_{j=1}^a \frac{ \zeta'(1+\frac{s_{a+j}}{\log x}) }{ \zeta(1+\frac{s_{a+j}}{\log x}) }   \prod_{j=1}^{2a}e^{s_j}\frac{ds_j}{s_j^{B+1}}
 \\
+ O\big((\log x)^{a^2/b^2}\big).
\end{multline*}
As before we may trivially bound the integrand, this time by $\ll (\log x)^{a^2/b^2+1}$, and then truncate the integrals at height $t_j=\pm\sqrt{\log x}$ to give
\begin{multline*}
S_{12} =  
 \frac{B!^{2a}}{(2\pi i)^{2a}}\int_{1-i\sqrt{\log x}}^{1+i\sqrt{\log x}}\!\!\!\cdots \int_{1-i\sqrt{\log x}}^{1+i\sqrt{\log x}}
 \mc{A}(\frac{\ul s}{\log x})
 \\
 \times
 \prod_{i,j=1}^a \zeta(1+\frac{s_i+s_{a+j}}{\log x} )^{1/b^2}\frac{1}{b} \sum_{j=1}^a \frac{ \zeta'(1+\frac{s_{a+j}}{\log x}) }{ \zeta(1+\frac{s_{a+j}}{\log x}) }   
 \prod_{j=1}^{2a}e^{s_j}\frac{ds_j}{s_j^{B+1}}
+ O\big((\log x)^{a^2/b^2+1/2}\big)
\end{multline*}  
since the tail integrals result in an error $\ll (\log x)^{a^2/b^2+1-B/2}$ and $B\geqs 1$. Then, applying the Taylor and Laurent expansions given  in \eqref{A taylor} and \eqref{zeta laurent} along with 
\begin{equation}
\label{log der laurent}
 \frac{ \zeta'(1+\frac{s_{a+j}}{\log x}) }{ \zeta(1+\frac{s_{a+j}}{\log x}) } 
 =
 -\frac{\log x}{s_{a+j}}+O\Big(\frac{1}{\sqrt{\log x}}\Big),
\end{equation}
which is valid in the current region of integration, we find 
\begin{multline*}
S_{12} = -\frac{1}{b} \frac{\mc{A}(\ul{0})B!^{2a}(\log x)^{a^2/b^2+1}}{(2\pi i)^{2a}}  \int_{1-i\sqrt{\log x}}^{1+i\sqrt{\log x}}\!\!\! \cdots \int_{1-i\sqrt{\log x}}^{1+i\sqrt{\log x}} \prod_{i,j=1}^a\frac{1}{(s_i+s_{a+j})^{1/b^2}} \\
 \times \sum_{j=1}^a \frac{1}{s_{a+j}}\prod_{j=1}^{2a}e^{s_j}\frac{ds_j}{s_j^{B+1}}
+ O\big((\log x)^{a^2/b^2+1/2}\big).
 \end{multline*}
Extending the integrals back to $\pm i\infty$ incurs an error of size $O((\log x)^{a^2/b^2+1/2})$. Also, by symmetry the sum $\sum_{j=1}^a s_{a+j}^{-1}$ results in $a$-copies of the integral with a factor of $s_{2a}^{-1}$, say. Hence, we acquire the asymptotic formula
\begin{equation}
\label{S_12 asymp}
S_{12}= -\frac{a}{b}\mc{A}(\ul{0})\gamma(a,b)(\log x)^{a^2/b^2+1} + O \big( (\log x)^{a^2/b^2+1/2}\big)
\end{equation}
where
\begin{align*}
\gamma(a,b) = & \frac{B!^{2a}}{(2\pi i)^{2a}} \int_{(1)^{2a}} \prod_{i,j=1}^a\frac{1}{(s_i+s_{a+j})^{1/b^2}}\bigg[\prod_{j=1}^{2a-1}e^{s_j}\frac{ds_j}{s_j^{B+1}}\bigg]
e^{s_{2a}}\frac{ds_{2a}}{s_{2a}^{B+2}}.
\end{align*}

\subsection{Computation of the constants}\label{constants sec}

Applying \eqref{S_11 asymp} and \eqref{S_12 asymp} in Ng's formula \eqref{Ngs} we find that 
\begin{multline*}
S_1=\mc{A}(\ul{0})\frac{\beta(a,b)}{2}T(\log T)(\log x)^{a^2/b^2}+(a/b)\mc{A}(\ul{0})\gamma(a,b)T(\log x)^{a^2/b^2+1}
\\+O \big( T(\log x)^{a^2/b^2+1/2}\big).
\end{multline*}
Since $x=T^{\theta/(a+b)}$ it remains to show that the constants $\mc{A}(\ul{0})$, $\beta(a,b)$ and $\gamma(a,b)$ are positive.

 From the definition of $\mc{A}(\ul{s})$ given in \eqref{A(s)} a short calculation shows that 
\[
\mc{A}(\ul{0})=\prod_p \bigg(1-\frac{1}{p}\bigg)^{k^2}\sum_{m\geqs 0}\frac{\tau_{-k}(p^m)^2}{p^m}
\]
which is an absolutely convergent product. For the combinatorial constants we follow \cite[Lemma 8]{BH}. Recall that  
\[
\beta(a,b)=
 \frac{B!^{2a} }{(2\pi i )^{2a}} \int_{(1)^{2a}} \prod_{i,j=1}^a\frac{1}{(s_i+s_{a+j})^{1/b^2}} 
 \prod_{\ell=1}^{2a}e^{s_\ell}\frac{ds_\ell}{s_\ell^{B+1}}.
\]
For each term in the double product write 
\[
\frac{1}{(s_i+s_{a+j})^{1/b^2}} = \frac{1}{\Gamma(1/b^2)}\int_0^\infty e^{-(s_i+s_{a+j})x_{ij}}x_{ij}^{1/b^2}\frac{dx_{ij}}{x_{ij}}
\]
so that
\begin{align*}
\beta(a,b) = &\frac{B!^{2a}}{\Gamma(1/b^2)^{a^2} }\frac{1}{(2\pi i)^{2a}}\int_{(1)^{2a}} \int_{[0,\infty]^{a^2}} \Big[\prod_{i=1}^a e^{s_i(1-\sum_{j=1}^a x_{ij} )}\Big]\\
                   &\qquad\qquad\times \Big[  \prod_{j=1}^a e^{s_{a+j}(1-\sum_{i=1}^a x_{ij} )} \Big] \prod_{i,j=1 }^a x_{ij}^{1/b^2} \frac{ dx_{ij} }{ x_{ij} }\prod_{\substack{j=1}}^{2a}\frac{ds_j}{s_j^{B+1}}.
\end{align*}
After interchanging the order of integration and using the formula
\[
\frac{B!}{2\pi i}\int_{(c)} e^{s(1-X)}\frac{ds}{s^{B+1}} = \begin{cases}
      (1-X)^B & \text{ if } X\leqslant 1, \\
      0       & \text{ otherwise};
    \end{cases}
\]
we get
\begin{align*}
\beta(a,b) = & \frac{1}{\Gamma(1/b^2)^{a^2}}\int_{\mathscr{P}_{a,b}}\prod_{i=1}^a \bigg(1-\sum_{j=1}^a x_{ij}  \bigg)^B\prod_{\substack{j=1}}^a \bigg(1-\sum_{i=1}^a x_{ij}  \bigg)^B \prod_{i,j=1 }^a x_{ij}^{1/b^2} \frac{ dx_{ij} }{ x_{ij} }
\end{align*}
where
\[
\mathscr P_{a,b} = \Big\{ (x_{ij})\in\mathbb R^{a^2}:\  x_{ij}\geqslant 0,\  \sum_{i=1}^a x_{ij}\leqslant 1,\   \sum_{j=1}^a x_{ij}\leqslant 1  \Big\}.
\]
A similar formula holds for $\gamma(a,b)$; the only difference being that the factor $(1-\sum_i x_{ia})^B$ is replaced by $(1-\sum_i x_{ia})^{B+1}/(B+1)$. From this we easily see that it is also positive. Proposition \ref{S_1 prop} then follows.


\section{Proof of proposition 2 }\label{S2Proof}
In this section we shall prove Proposition \ref{S_2 prop} assuming Theorem \ref{twisted moment thm}. We start from the formula
\[
S_2=\frac{d}{d\alpha}\frac{d}{d\beta}\sum_{0\leqs \gamma \leqs T}\zeta(\rho+\alpha)\zeta(1-\rho+\beta)Q(\rho)Q(1-\rho) \big|_{\alpha=\beta=0}
\]
where 
\[
Q(s) =  \bigg(\sum_{n\leqs x}\frac{\tau_{-1/b}(n)\psi(n)}{n^s}\bigg)^{a+b}
        = \sum_{n\leqs y}\frac{a(n)}{n^s}
\]
with
\begin{equation}\label{coefficients}
a(n)=a(n,x) = \sum_{\substack{n_1\cdots n_{a+b}=n\\ n_j\leqs x}}\prod_{j=1}^{a+b} \tau_{-1/b}(n_j)\psi(n_j)
\end{equation}
and $y=x^{a+b}=T^\theta$ with $\theta<1/2$.
Thereom \ref{twisted moment thm} then gives $S_2$ as a sum of three terms, two of which involve the $Z$ terms. We write this as 
\[
S_2=S_{21}+S_{22}+S_{23}
\]
with
\begin{multline}
\label{S_21}
S_{21} = \frac{d}{d\alpha}\frac{d}{d\beta}  
              \sum_{g\leqs y}\sum_{\substack{h,k\leqs y/g\\(h,k)=1}}\frac{a(gh)a(gk)}{ghk}
              \frac{1}{2\pi}\int_1^T\log\bigg(\frac{t}{2\pi}\bigg)
    \bigg[
              \frac{\zeta(1+\alpha+\beta)}{h^{\beta}k^{\alpha}}
 \\     + \bigg(\frac{t}{2\pi}\bigg)^{-\alpha-\beta}
             \frac{\zeta(1-\alpha-\beta)}{h^{-\alpha}k^{-\beta}}
    \bigg]dt
\bigg|_{\alpha=\beta=0}
\end{multline}
and
\begin{multline}
\label{S_22}
S_{22} = \frac{d}{d\alpha}\frac{d}{d\beta}  \frac{d}{d\gamma}  
              \sum_{g\leqs y}\sum_{\substack{h,k\leqs y/g\\(h,k)=1}}\frac{a(gh)a(gk)}{ghk}
              \frac{1}{2\pi} \int_1^T\bigg[ 
               Z_{\alpha,\beta,\gamma,h,k}
\\       + \bigg(\frac{t}{2\pi}\bigg)^{-\alpha-\beta}Z_{-\beta,-\alpha,\gamma,h,k}
      + \bigg(\frac{t}{2\pi}\bigg)^{-\beta-\gamma}Z_{\alpha,-\gamma,-\beta,h,k}
     \bigg]dt
     \bigg|_{\alpha=\beta=\gamma=0}.
\end{multline}
The formula for $S_{23}$ has $\alpha$ and $\beta$ interchanged in the integrand but by symmetry this is simply equal to $S_{22}$. We compute $S_{21}$ first.

\subsection{Computing $S_{21}$}

Our aim is to show $S_{21}\ll T(\log T)^{ k^2+3 }$. It is helpful to express the integrand in a form in which the holomorphy is immediately visible, as in \cite{CFKRS}. For this purpose we use the formula
\begin{multline}\label{contour int rep}
\frac{\zeta(1+\alpha+\beta)}{h^{\beta}k^{\alpha}}
      + \bigg(\frac{t}{2\pi}\bigg)^{-\alpha-\beta}
             \frac{\zeta(1-\alpha-\beta)}{h^{-\alpha}k^{-\beta}} \\
=-\Big( \frac{t}{2\pi} \Big)^{ -\frac{\alpha+\beta}{2} }  
    \frac{1}{ (2\pi i )^2 } \oint\!\!\!\oint \bigg(\frac{t}{2\pi}\bigg)^{\frac{z_2-z_1}{2}}\frac{ \zeta(1+z_1-z_2)(z_2-z_1)^2 }{h^{-z_2}k^{z_1}\prod_{i=1}^2(z_i-\alpha)(z_i+\beta)}dz_1dz_2
\end{multline}
where the integrals are over circles of radii $\ll 1/\log T$ that enclose the shifts $\alpha$ and $\beta$. This formula follows from a short residue computation. Interchanging the sum and integral we get 
\begin{multline*}
S_{21} = -  \frac{d}{d\alpha}\frac{d}{d\beta}  
                \frac{1}{2\pi}\int_0^T 
                  \Big( \frac{t}{2\pi} \Big)^{ -\frac{\alpha+\beta}{2} }  
                  \log\bigg(\frac{t}{2\pi}\bigg)
          \\
                \times
                  \frac{1}{ (2\pi i )^2 } \oint\!\!\!\oint \bigg(\frac{t}{2\pi}\bigg)^{\frac{z_2-z_1}{2}}
                  \frac{ \zeta(1+z_1-z_2)(z_2-z_1)^2 }{\prod_{i=1}^2(z_i-\alpha)(z_i+\beta)}      
                  F(x,z_1,z_2)
                dz_1dz_2    dt
               \bigg|_{\alpha=\beta=0}
\end{multline*}
where 
\[
F(x,z_1,z_2) = \sum_{g\leqs y}\sum_{\substack{h,k\leqs y/g\\(h,k)=1}}\frac{a(gh)a(gk)}{gh^{1-z_2}k^{1+z_1}}
                     = \sum_{\substack{h,k\leqs y}}\frac{a(h)a(k)(h,k)^{1+z_1-z_2}}{h^{1-z_2}k^{1+z_1}}.
\]
We now perform the differentiation and estimate the contour integrals. The contour lengths and the factor of $(z_2-z_1)^2$ contribute $\ll(\log T)^{-4}$ whilst the zeta function, negative powers of $z_i$ and $\log(t/2\pi )$ term contribute $\ll (\log T)^6$. The differentiation gives us two extra factors of $\log T$ and thus in total we get 
\[
S_{21}\ll T(\log T)^4 \max_{|z_1|,|z_2|\ll 1/\log T}|F(x,z_1,z_2)|.
\]
We are therefore required to show that 
\[
\max_{|z_1|,|z_2|\ll 1/\log T}|F(x,z_1,z_2)|\ll (\log T)^{\frac{a^2}{b^2}-1}.
\]
We will show this by the methods of the previous section. Throughout the following we shall assume that $z_1,z_2\ll 1/\log T$.

We proceed by first unfolding the sum using the formula for the coefficients $a(n)$ given in \eqref{coefficients}. Writing $a+b=N$ for short we find 
\[
F(x,z_1,z_2) =\!\!\!\!\! \sum_{\substack{h_1,\cdots, h_N\leqs x\\k_1.\cdots,k_N\leqs x}}
                         \frac{\Big[\prod_{i=1}^N \tau_{-1/b}(h_i)\tau_{-1/b}(k_i)\psi(h_i)\psi(k_i)\Big]( h_1\cdots h_N,k_1\cdots k_N)^{1+z_1-z_2}}{(h_1\cdots h_N)^{1-z_2}(k_1\cdots k_N)^{1+z_1}}
\]
By the Mellin inversion formula \eqref{mellin} we acquire
\begin{equation*}
F(x,z_1,z_2) = 
\frac{B!^{2N}}{(\log x)^{2NB}}\frac{1}{(2\pi i)^{2N}}\int_{(c)^{2N}}
\mc{F}_{z_1,z_2}(\underline{s})
                        \prod_{j=1}^{2N}x^{s_j}\frac{ds_j}{s_j^{B+1}}
\end{equation*}
where
\[
\mc{F}_{z_1,z_2}(\underline{s})= \sum_{\substack{h_1,\cdots, h_N\geqs 1\\k_1.\cdots,k_N\geqs 1}}
                         \frac{\Big[\prod_{i=1}^N \tau_{-1/b}(h_i)\tau_{-1/b}(k_i)\Big]( h_1\cdots h_N,k_1\cdots k_N)^{1+z_1-z_2}}
                        {h_1^{1+s_1-z_2}\cdots h_N^{1+s_N-z_2}k_1^{1+s_{N+1}+z_1}\cdots k_N^{1+s_{2N}+z_1}}
\]
and $c= 1/\log x$. Now, a short computation with Euler products shows that 
\begin{equation}\label{F euler}
\mc{F}_{z_1,z_2}(\underline{s})=\mc{C}_{z_1,z_2}(\underline{s})
\frac{\prod_{i,j=1}^N\zeta(1+s_i+s_{j+N})^{1/b^2}}
{\prod_{j=1}^{N}\zeta(1+s_j-z_2)^{1/b}\zeta(1+s_{j+N}+z_2)^{1/b}}
\end{equation}
where $\mc{C}_{z_1,z_2}(\underline{s})$ is an absolutely convergent Euler product in the region $\sigma_j>-1/4$, $j=1,\ldots, N$. This gives us the trivial bound 
\begin{equation}\label{F bound}
\qquad\qquad \qquad \qquad\qquad\mc{F}_{z_1,z_2}(\underline{s})\ll (\log x)^{\frac{N^2}{b^2}+\frac{2N}{b}}, \qquad\qquad\qquad \Re(s_j)\asymp\frac{1}{\log x}.
\end{equation}

Substituting $s_j\mapsto s_j/\log x$ for each $j$ gives
\[
F(x,z_1,z_2) = 
\frac{B!^{2N}}{(2\pi i)^{2N}}\int_{(1)^{2N}}
\mc{F}_{z_1,z_2}(\underline{s}/\log x)
                        \prod_{j=1}^{2N}e^{s_j}\frac{ds_j}{s_j^{B+1}}.
\]
By \eqref{F bound}, any given tail integral over the line from $1+i\sqrt{\log x}$ to $1+i\infty$ results in a total contribution of 
\[
(\log T)^{\frac{N^2}{b^2}+\frac{2N}{b}}\int_{1+i\sqrt{\log x}}^{1+i\infty} \frac{ds}{|s|^{B+1}}\ll (\log T)^{\frac{N^2}{b^2}+\frac{2N}{b}-B/2}=(\log T)^{k^2+4k+3-B/2}
\]
since the other integrals are absolutely convergent.
Therefore, on taking $B\geqs 10+8k$ this term is $\ll (\log T)^{k^2-2}$. Consequently, we may localise the integral:
\[
F(x,z_1,z_2) = 
\frac{B!^{2N}}{(2\pi i)^{2N}}\int_{1-i\sqrt{\log x}}^{1+i\sqrt{\log x}}\!\!\!\!\cdots \int_{1-i\sqrt{\log x}}^{1+i\sqrt{\log x}}
\mc{F}_{z_1,z_2}(\underline{s}/\log x)
                        \prod_{j=1}^{2N}e^{s_j}\frac{ds_j}{s_j^{B+1}}+O\big((\log T)^{k^2-2}\big).
\]
In this new region of integration we have the bounds 
\ben
\label{inverse zeta bound}
\zeta\Big(1+\frac{s_j}{\log x}\pm z_i\Big)^{-1}\ll \frac{|s_j|}{\log x}+|z_j|\ll \frac{1}{\log T}(|s_j|+1)
\een
and 
\ben\label{zeta bound}
\zeta(1+(s_i+s_j)/\log x)\ll \frac{\log T}{|s_i+s_j|}
\een
as well as $\mc{C}_{z_1,z_2}(\underline{s}/\log x)\ll 1$. Applying these bounds in \eqref{F euler} we find that in the region of integration we have 
\begin{equation}
\label{mcF bound}
\mc{F}_{z_1,z_2}(\ul{s}/\log x)\ll (\log T)^{\frac{N^2}{b^2}-\frac{2N}{b}}
\frac{\prod_{j=1}^{N}(|s_j|+1)^{1/b}(|s_{j+N}|+1)^{1/b}}{\prod_{i,j=1}^N|s_i+s_{j+N}|^{1/b^2}}.
\end{equation}
By the absolute convergence of the integrals we get
\[
F(x,z_1,z_2)\ll (\log T)^{\frac{N^2}{b^2}-\frac{2N}{b}}=(\log T)^{k^2-1}
\]
and the required bound for $S_{21}$ follows.

\subsection{Computing $S_{22}$} The bound $S_{22}\ll T(\log T)^{k^2+3}$ follows similarly with a few extra technicalities. We begin with an equivalent version of formula \eqref{contour int rep} for the sum of $Z$ terms. This is given by 
\begin{multline}
\label{contour int rep 2}
Z_{\alpha,\beta,\gamma,h,k}+\bigg(\frac{t}{2\pi }\bigg)^{-\alpha-\beta}Z_{-\beta,-\alpha,\gamma,h,k}+\bigg(\frac{t}{2\pi }\bigg)^{-\beta-\gamma}Z_{\alpha,-\gamma,-\beta,h,k}
\\ =
-\frac{1}{2}\bigg(\frac{t}{2\pi}\bigg)^{-\frac{\alpha+\beta+\gamma}{2}}
\frac{1}{(2\pi i)^3}\oint\oint\oint 
\frac{Z_{z_1,-z_2,z_3,h,k}\Delta(z_1,z_2,z_3)^2}{\prod_{i=1}^3(z_i-\alpha)(z_i+\beta)(z_i-\gamma)}
\\
\times
\bigg(\frac{t}{2\pi}\bigg)^{\frac{z_1-z_2+z_3}{2}}dz_1dz_2dz_3
\end{multline}
where $\Delta(z_1,z_2,z_3)=\prod_{i<j}(z_j-z_i)$ is the vandermonde determinant and the integrals are over circles of radii $\ll 1/\log T$ enclosing the shifts.
Again, this formula follows by a simple (but tedious) residue calculation. As before, we plan to interchange the order of summation and integration and then compute the resulting sum. The computations are considerably simplified if we write $Z_{z_1,-z_2,z_3,h,k}$  in a diagonal form.

\begin{lem}\label{diag lem} Let $H(s)$ be an analytic function which satisifes $H(0)=1$ and is zero at $2s=z_2-z_1$ and $2s=z_2-z_3$. Furthermore, suppose that $H(u+iv)\ll_u (\log T)^2 e^{-cv^2}$ for fixed $u$ and large $v$. For $c>0$ let 
\[
\tilde{Z}_{z_1,z_2,z_3,h,k}(T)
=
\frac{1}{2\pi i }\int_{c-i\infty}^{c+i\infty}\frac{H(s)}{s}
T^{2s}
\sum_{hm=kn}\frac{f_{z_1,z_3}(m) n^{-z_2}}{(hk)^{1/2}(mn)^{1/2+s}} ds
\] 
where $f_{z_1,z_2}(n)$ is given by \eqref{f}. Then for $h,k\leqs T$ with $(h,k)=1$ and $z_j\ll 1/\log T$ we have 
\begin{equation}\label{Z diag}
Z_{z_1,z_2,z_3,h,k}=hk\tilde{Z}_{z_1,z_2,z_3,h,k}(T)+O\Big(\tau(k)\Big(\frac{T^2}{hk}\Big)^{-\frac{1}{\log_3 T}}(\log T)^5(\log_3 T)\Big)
+O\Big(\frac{\tau(k)}{(\log T)^{A}}\Big)
\end{equation}
where $A>0$ is an arbitrary constant.
\end{lem}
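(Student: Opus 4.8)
The plan is to prove Lemma~\ref{diag lem} by recognizing $Z_{z_1,z_2,z_3,h,k}$ as (essentially) a shifted divisor sum and then evaluating that sum via a contour integral, following the template already used for $S_{11}$ and $S_{12}$. First I would unpack the definition \eqref{Z} of $Z_{\alpha,\beta,\gamma,h,k}$: the ratio $\zeta(1+\alpha+\beta)\zeta(1+\beta+\gamma)/\zeta(1+\beta)$ is the Dirichlet series $\sum_n g(n) n^{-1-\beta}$ where $g$ is the multiplicative function with $g(p^m)=\sum_{m_1+m_2+m_3=m}\mu(m_1)p^{-m_2\alpha}p^{-m_3\gamma} = f_{\alpha,\gamma}(p^m)$ (up to the shift normalisation), so that in fact $Z_{z_1,z_2,z_3,h,k}$ is exactly the value at $s=0$ of a Mellin-type integral whose integrand is $\frac{H(s)}{s}T^{2s}$ times a correlation sum $\sum_{hm=kn} f_{z_1,z_3}(m) n^{-z_2}(hk)^{-1/2}(mn)^{-1/2-s}$, after inserting the arithmetic local factors $\prod_{p^{k_p}\|k}(\cdots)$ to account for the coprimality of $h,k$. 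The key identity to establish is that pushing the contour in $\tilde Z$ to the right past $s=0$ picks up exactly the residue $Z_{z_1,z_2,z_3,h,k}/(hk)$, using $H(0)=1$ and the fact that $H(s)$ vanishes at $2s=z_2-z_1$ and $2s=z_2-z_3$ to cancel the spurious poles of the zeta-factors in $\sum_{hm=kn}$, which are the poles one would otherwise meet. This is where the peculiar vanishing hypotheses on $H$ are used: they are precisely designed so that the only pole crossed is at $s=0$.

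Concretely I would write $\sum_{hm=kn} f_{z_1,z_3}(m)n^{-z_2}(hk)^{-1/2}(mn)^{-1/2-s}$ as an Euler product, identify its analytic continuation as a product of three shifted zeta values ($\zeta(1+2s+z_1-z_2)$, $\zeta(1+2s+z_3-z_2)$, $\zeta(1+2s)^{-1}$-type factors) times an absolutely convergent correction, locate its poles in $s$, and then in $\tilde Z_{z_1,z_2,z_3,h,k}(T)$ move the line of integration from $\Re s=c>0$ leftwards (or the residue computation from the right). The residue at $s=0$ contributes $Z_{z_1,z_2,z_3,h,k}/(hk)$ after multiplying by $hk$; the residues at $2s=z_2-z_1$ and $2s=z_2-z_3$ are killed by the zeros of $H$; and the remaining integral, now on a line $\Re s = -1/\log_3 T$ (a standard zero-free-region-free choice since these zetas are evaluated near the $1$-line), is bounded by moving to that line and using the Gaussian decay hypothesis $H(u+iv)\ll_u (\log T)^2 e^{-cv^2}$ together with $T^{2s}=(T^2)^{-1/\log_3 T}$ on that line. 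The divisor bound $\sum_{hm=kn}1\ll \tau(k)(\cdots)$ (valid since $h,k$ coprime forces $h\mid n$, $k\mid m$) produces the $\tau(k)$ factors and the powers of $\log T$, $\log_3 T$ in the first error term; the second error term $O(\tau(k)/(\log T)^A)$ absorbs the contribution from truncating the integral and from the tail of the correction Euler product.

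The main obstacle is the bookkeeping around the poles of the correlation Dirichlet series in the variable $s$ and verifying that $H$'s zeros land exactly on them: one must check that the only poles of $\frac{H(s)}{s}T^{2s}\sum_{hm=kn}(\cdots)$ in the relevant half-plane are at $s=0$ and at $2s \in \{z_2-z_1,\ z_2-z_3\}$, and that no pole collides with another (which is why the shifts are kept generic, $z_j\ll 1/\log T$ but not necessarily equal). A secondary technical point is tracking the dependence on $h,k$ uniformly: the Euler-product correction and the local factors $\prod_{p^{k_p}\|k}$ in \eqref{Z} must be shown to be $O(\tau(k))$ or better, uniformly in $h,k\leqs T$, so that the stated error terms hold with the clean $\tau(k)$ dependence. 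Once the residue at $s=0$ is matched to \eqref{Z} and the shifted-contour integral is bounded as above, \eqref{Z diag} follows; the choice $\Re s = -1/\log_3 T$ is what makes both the $T^{2s}$ saving and the polynomial-in-$\log$ loss come out as written.
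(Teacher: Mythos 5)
Your proposal matches the paper's proof in every essential respect: reparametrize the diagonal sum using $(h,k)=1$ so the Dirichlet series in $s$ factors as $\zeta(1+z_1+z_2+2s)\zeta(1+z_2+z_3+2s)/\zeta(1+z_2+2s)$ times an Euler product over $p\mid k$ bounded by $\tau(k)$, shift the contour to $\Re s=-1/\log_3 T$ (staying in the zero-free region of $\zeta(1+s)$) so that the residue at $s=0$ produces $Z/(hk)$ while the zeros of $H$ cancel the poles of the two shifted zeta factors, and bound the remaining integral via the Gaussian decay of $H$ together with $T^{2s}\ll T^{-2/\log_3 T}$ on the new line. The paper additionally inserts an intermediate truncation of the integral at height $\pm\sqrt{A\log\log T}$ before the final shift — which is where the second error term $O(\tau(k)/(\log T)^A)$ comes from — but this is a technical refinement of the same contour argument you describe, not a different route.
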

\begin{proof}
This is similar to the proof of Theorem 1.2 in \cite{BBLR}. Since $(h,k)=1$ we have
\[
\tilde{Z}_{z_1,z_2,z_3,h,k}(T)=\frac{1}{h^{z_2}}\frac{1}{2\pi i }\int_{c-i\infty}^{c+i\infty}\frac{H(s)}{s}T^{2s}\frac{1}{(hk)^{1+s}}\sum_{\ell }\frac{f_{z_1,z_3}(k\ell) }{\ell^{1+z_2+2s}} ds.
\] 
Now, a short computation shows that 
\begin{multline*}
     \sum_{\ell }\frac{f_{z_1,z_3}(k\ell) }{\ell^{1+z_2+2s}}
  = 
     \frac{\zeta(1+z_1+z_2+2s)\zeta(1+z_2+z_3+2s)}{\zeta(1+z_2+2s)}     
  \\
     \times
     \prod_{p^{k_p} || k}\frac{\sum_{m\geqs 0}f_{z_1,z_3}(p^{m+k_p})p^{-m(1+z_2+2s)}}{\sum_{m\geqs 0}f_{z_1,z_3}(p^{m})p^{-m(1+z_2+2s)}}.
\end{multline*}
This is holomorphic for $\sigma>0$ so we may freely shift to the line $\Re(s)=1/\log T$. We then truncate the integral at height $t=\pm\sqrt{A\log\log T}$ incurring an error of size
\begin{equation}
\label{trunc est}
\ll \frac{1}{hk} e^{-cA\log\log T}(\log T)^5 \tau(k)\ll \frac{\tau(k)}{hk}(\log T)^{-A^\prime};
\end{equation}
the factor of $\tau(k)$ owing to the fact that uniformly for $\sigma\geqs -1/4$, say, the product over primes dividing $k$ is 
\[
\ll \prod_{p|k}\Big(1+O\big(p^{-1/2}\big)\Big)\ll \tau(k).
\]
Then, shifting the contour to $\Re(s)=-1/\log_3 T$ we remain in the zero-free region of $\zeta(1+s)$ and hence find only a simple pole at $s=0$ (since the zeros of $H(s)$ cancel the other poles). Since $\zeta(s)^{\pm 1} \ll \log T$ on the contour, the integral over the left edge is 
\[
\ll T^{-2/\log_3 T}(hk)^{-1+1/\log_3 T}\tau(k) (\log T)^5(\log_3 T)
\]
whilst the horizontal integrals give a lower order contribution plus a contribution of size \eqref{trunc est}. Therefore,
\begin{multline*}
\tilde{Z}_{z_1,z_2,z_3,h,k}(T)=\frac{1}{hk}Z_{z_1,z_2,z_3,h,k}+O\Big(\frac{\tau(k)}{hk}\Big(\frac{T^2}{hk}\Big)^{-\frac{1}{\log_3 T}}(\log T)^5(\log_3 T)\Big)
\\
+O\Big(\frac{\tau(k)}{hk}(\log T)^{-A^\prime}\Big)
\end{multline*}
and the result follows.
\end{proof}


\begin{rem}
The function $H(s)$ in the lemma is typical and can be prescribed a more precise form. For example, one can take $H(s)=Q_{\ul{z}}(s)\exp(s^2)$ where $Q_{\ul{z}}(s)$ is a polynomial which is 1 at $s=0$ and zero at $2s=z_2-z_1$ and $2s=z_2-z_3$.
\end{rem}
The first error term of \eqref{Z diag} gives a total contribution to $S_{22}$ of
\[
\ll\Big(\frac{T^2}{y^2}\Big)^{-\frac{1}{\log_3 T}}(\log T)^5(\log_3 T)\sum_{g\leqs y}\sum_{\substack{h,k\leqs y/g\\(h,k)=1}}\frac{|a(gh)||a(gk)|\tau(k)}{ghk}\int_1^T dt\ll T^{1-\frac{1}{\log_3 T}}(\log T)^C
\]
since $y\leqs T^{1/2}$ and our coefficients satisfy divisor function bounds. The second error term contributes $\ll T/(\log T)^{A^\prime}$. As noted earlier, when estimating these error terms we can perform the differentiation via Cauchy's formula over circles of radii $1/\log T$. This adds three extra factors of $\log T$ which is tolerable.

Applying Lemma \ref{diag lem} and \eqref{contour int rep 2} in \eqref{S_22} we find that 
\begin{multline*}
S_{22} = -\frac{1}{4\pi}\frac{d}{d\alpha}\frac{d}{d\beta}  \frac{d}{d\gamma} 
                 \bigg(\frac{t}{2\pi}\bigg)^{-\frac{\alpha+\beta+\gamma}{2}}
                 \int_1^T
                 \frac{1}{(2\pi i)^3}\oint\!\!\!\oint\!\!\!\oint 
                 G(x,\underline{z})
        \\      \qquad   \times 
                 \frac{\Delta(z_1,z_2,z_3)^2}{\prod_{i=1}^3(z_i-\alpha)(z_i+\beta)(z_i-\gamma)}
                 \bigg(\frac{t}{2\pi}\bigg)^{\frac{z_1-z_2+z_3}{2}}dz_1dz_2dz_3dt
                 \bigg|_{\alpha=\beta=\gamma=0}
+o(T)
\end{multline*}
where 
\[
G(x,\underline{z}) 
=
 \sum_{g\leqs y}\sum_{\substack{h,k\leqs y/g\\(h,k)=1}}\frac{a(gh)a(gk)}{g}\tilde{Z}_{z_1,-z_2,z_3,h,k}(T).
\]
Performing the differentiation and then trivially estimating the $z_j$ integrals gives
\[
S_{22}\ll T (\log T)^{9+3-2\cdot 3-3}\max_{|z_j|\ll1/\log T}|G(x,\ul{z})|.
\]
Thus, we are required to show that 
\begin{equation}
\label{G bound}
\max_{|z_j|\ll1/\log T}|G(x,\ul{z})|\ll (\log T)^{k^2}.
\end{equation}

Unfolding the integral we have
\begin{align*}
G(x,\ul{z})
=  &
\frac{1}{2\pi i }\int_{c-i\infty}^{c+i\infty}
\frac{H(w)}{w}
T^{2w}
 \sum_{g\leqs y}\sum_{\substack{h,k\leqs y/g\\(h,k)=1}}\frac{a(gh)a(gk)}{g}\sum_{hm=kn}\frac{f_{z_1,z_3}(m) n^{z_2}}{(hk)^{1/2}(mn)^{1/2+w}} dw
\\
=  &
\frac{1}{2\pi i }\int_{c-i\infty}^{c+i\infty}
\frac{H(w)}{w}
T^{2w}
 \sum_{hm=kn}\frac{a(h)a(k)f_{z_1,z_3}(m) n^{z_2}}{(hk)^{1/2}(mn)^{1/2+w}} dw.
\end{align*}
Applying the definitions of the coefficients $a(n)$ in \eqref{coefficients} and $f_{z_1,z_2}(n)$ in \eqref{f} along with the Mellin inversion formula \eqref{mellin} for the weights $\psi(n)$ gives 
\begin{align*}
 \sum_{hm=kn}&\frac{a(h)a(k)f_{z_1,z_3}(m) n^{z_2}}{(hk)^{1/2}(mn)^{1/2+w}} 
 = \frac{B!^{2N}}{(\log x)^{2NB}(2\pi i)^{2N}}\int_{(\kappa)^{2N}}
       \mc{G}(w,\underline{s},\underline{z})\prod_{j=1}^{2N}x^{s_j}\frac{ds_j}{s_j^{B+1}}
\end{align*}
where
\begin{equation*}
\begin{split}
\label{mc G}
\mc{G}(w,\underline{s}, &\underline{z})
=
\!\!\!\!\!\!\!\!\!\sum_{\substack{h_1\cdots h_Nm_1m_2m_3 \\ = k_1\cdots k_Nn}}
  \frac{\Big(\prod_{i=1}^N \tau_{-1/b}(h_i)\tau_{-1/b}(k_i)\Big)\mu(m_1)}
          {\Big(\prod_{i=1}^{N}h_i^{\frac{1}{2}+s_i}k_i^{\frac{1}{2}+s_{i+N}}\Big)
          m_1^{\frac{1}{2}+w}m_2^{\frac{1}{2}+z_1+w}m_3^{\frac{1}{2}+z_3+w}n^{\frac{1}{2}-z_2+w}}
\end{split}
\end{equation*}
and $N=a+b$, as before. By considering its Euler product we find
\begin{equation}
\begin{split}
\label{mc G prod}
         &  \mc{G}(w,\underline{s}, \underline{z})
     =   
             \mc{D}(w,\ul{s},\ul{z})
             \frac{\zeta(1+z_1-z_2+2w)\zeta(1-z_2+z_3+2w)}{\zeta(1-z_2+2w)}
   \\  \times          
         &  \frac{\prod_{i,j=1}^N\zeta(1+s_i+s_{j+N})^{1/b^2}\prod_{j=1}^N\zeta(1+s_{j+N}+w)^{1/b}}
             {\prod_{i=1}^N\zeta(1+s_i-z_2+w)^{1/b}\prod_{j=1}^N\zeta(1+s_{j+N}+z_1+w)^{1/b}\zeta(1+s_{j+N}+z_3+w)^{1/b}}
\end{split}
\end{equation}
where $\mc{D}(w,\ul{s},\ul{z})$ is an absolutely convergent Euler product provided $\sigma_j>-1/4$, $j=1,\ldots 2N$, $w>-1/4$.

The integral over $w$ has now done its job and we can shift to the left picking up the pole at $w=0$. We first set $\kappa=1/4$
and note this implies $\prod_{j=1}^{2N}x^{s_j}\ll x^{N/2}=T^{\theta/2}$. Shifting the $w$ contour to the line with $\Re(w)=-1/4+\epsilon$ and assuming RH we only encounter a simple pole at $w=0$. By \eqref{mc G prod}, $\mc{G}(w,\ul{s},\ul{z})\ll(wT)^{\epsilon}$ and hence the integral over the new line gives a total contribution of 
\[
\ll T^{-1/2+\theta/2+\epsilon}=o(1).
\]
Thus we acquire 
\[
G(x,\ul{z})
=
\frac{B!^{2N}}{(\log x)^{2NB}(2\pi i)^{2N}}
\int_{(\kappa)^{2N}}
\mc{G}(0,\underline{s},\underline{z})\prod_{j=1}^{2N}x^{s_j}\frac{ds_j}{s_j^{B+1}}
+o(1).
\]

We now shift $\kappa$ to $1/\log x$ and  substitute $s_j\mapsto s_j/\log x$ for each $j$ to give 
\[
G(x,\ul{z})
=
\frac{B!^{2N}}{(2\pi i)^{2N}}
\int_{(1)^{2N}}
\mc{G}(0,\underline{s}/\log x,\underline{z})\prod_{j=1}^{2N}e^{s_j}\frac{ds_j}{s_j^{B+1}}
+o(1).
\]
By \eqref{mc G prod} we have the trivial estimate
\ben
\label{G trivial}
\qquad\qquad\qquad\mc{G}(0,\underline{s}/\log x,\underline{z}), 
\ll
(\log T)^{\frac{N^2}{b^2}+\frac{4N}{b}+1},
\qquad\qquad\Re(s_j)\asymp 1
\een
whilst for $|s_j|=o(\log x)$ we have 
\begin{multline}
\label{G laurent}
\mc{G}(0,\underline{s}/\log x,\underline{z})
\ll
(\log T)^{\frac{N^2}{b^2}-\frac{2N}{b}+1} 
\\
\times
\frac{\prod_{i=1}^N(1+|s_i|)^{1/b}\prod_{j=1}^N(1+|s_{j+N}|)^{1/b}(1+|s_{j+N}|)^{1/b}}
{\prod_{i,j=1}^N|s_i+s_{j+N}|^{1/b^2}\prod_{j=1}^N |s_{j+N}|^{1/b}}
\end{multline}
where we have used the bounds for $\zeta(s)$ given in \eqref{inverse zeta bound} and \eqref{zeta bound}. As before, truncating any of the $s_j$ integrals at height $t_j=\sqrt{\log x}$ leads to an error of 
\[
(\log T)^{\frac{N^2}{b^2}+\frac{4N}{b}+1}\int_{1+i\sqrt{\log x}}^{1+i\infty}\frac{ds}{|s|^{B+1}}
\ll
(\log T)^{k^2+6k+6-B/2} 
\]
and thus choosing \[B\geqs 12k+14\] this is $\ll (\log T)^{k^2-1}$. Performing this truncation in each variable gives
\[
G(x,\ul{z})
=
\frac{B!^{2N}}{(2\pi i)^{2N}}
\int_{1-i\sqrt{\log x}}^{1+i\sqrt{\log x}}\!\!\!\!\!\!\cdots\int_{1-i\sqrt{\log x}}^{1+i\sqrt{\log x}}
\mc{G}(0,\underline{s}/\log x,\underline{z})\prod_{j=1}^{2N}e^{s_j}\frac{ds_j}{s_j^{B+1}}
+O\big((\log T)^{k^2-1}\big)
\]
and by \eqref{G laurent} this is $\ll(\log T)^{\frac{N^2}{b^2}-\frac{2N}{b}+1} 
= (\log T)^{k^2}$. The bound $S_{22}\ll (\log T)^{k^2+3}$ then follows.


\section{Proof of Theorem \ref{twisted moment thm}}\label{Theorem2Proof}
Let
\begin{equation}
\label{S_3}
  S_3=S_3(\alpha,\beta,T)
        =   \sum_{0\leqs \gamma\leqs T}\zeta(\rho+\alpha)\zeta(1-\rho+\beta)Q(\rho)\overline{Q}(1-\rho)
\end{equation}
where $\alpha,\beta$ are small ($\ll1/\log T$), complex shifts. 
These types of mean values have been considered before by several authors \cite{CGG, CGGSimple, Ng1}. Accordingly, we shall only briefly describe the initial steps using \cite{CGG} as our main reference.

We write $S_3$ as the integral over the positively oriented rectangular contour $\Gamma$ with vertices $a+i,a+iT,1-a+iT,1-a+i$, $a=1+1/\log T$:
\[
S_3=\frac{1}{2\pi i }\int_\Gamma \frac{\zeta^\prime(s)}{\zeta(s)}\zeta(s+\alpha)\zeta(1-s+\beta)Q(s)\overline{Q}(1-s)ds. 
\]
Since $Q(s)\ll y^{1-\sigma+\epsilon}$, $\zeta(s)\ll t^{\tfrac{1}{2}(1-\sigma)+\epsilon}$ and $T$ can be chosen such that $(\zeta^\prime/\zeta)(s)\ll (\log T)^2$ on the contour, we find that the horizontal sections contribute $O(y^{1+\epsilon}T^{1/2+\epsilon})$. For the contour on the left hand side we apply the functional equation
\[
\frac{\zeta^\prime(s)}{\zeta(s)}=\frac{\chi^\prime(s)}{\chi(s)}-\frac{\zeta^\prime(1-s)}{\zeta(1-s)}\
\]
where $\chi(s)=\pi^{s-\tfrac{1}{2}}\Gamma((1-s)/2)/\Gamma(s/2)$ is the factor appearing in the functional equation $\zeta(s)=\chi(s)\zeta(1-s)$. The integral involving $-\zeta^\prime(1-s)/\zeta(1-s)$ is given by 
\begin{align*}
&-\frac{1}{2\pi i}\int_{1-a+iT}^{1-a+i}\frac{\zeta'(1-s)}{\zeta(1-s)}\zeta(s+\alpha)\zeta(1-s+\beta)Q(s)\ol{Q}(1-s)ds\\
& = \frac{1}{2\pi }\int_{1}^T\frac{\zeta'(a-it)}{\zeta(a-it)}\zeta(1-a+it+\alpha)\zeta(a-it+\beta)Q(1-a+it)\ol{Q}(a-it)dt\\
& = \overline{\frac{1}{2\pi } \int_1^T \frac{\zeta'(a+it)}{\zeta(a+it)}\zeta(1-a-it+\overline{\alpha})\zeta(a+it+\overline{\beta})\ol{Q}(1-a-it)Q(a+it)dt}\\
& = \overline{\frac{1}{2\pi i}\int_{a+i}^{a+iT}\frac{\zeta'(s)}{\zeta(s)}\zeta(1-s+\overline{\alpha})\zeta(s+\overline{\beta})\ol{Q}(1-s)Q(s)ds}.
\end{align*}
This integral can therefore  be expressed in terms of the integral over the right edge of the contour.
For the integral involving $\chi^\prime(s)/\chi(s)$ we shift to the half-line and apply Stirling's formula in the form
\[
\frac{\chi^\prime(\tfrac{1}{2}+it)}{\chi(\tfrac{1}{2}+it)}=-\log\Big(\frac{|t|}{2\pi}\Big)+O(1/|t|),\qquad t\geqs 1.
\]  
In this way, we find that 
\[
S_3=I(\alpha, \beta,T)+\overline{I(\overline{\beta}, \overline{\alpha}, T)}+J(\alpha, \beta, T)+O(yT^{1/2+\epsilon})
\]
where
 \ben
\label{I}
I(\alpha, \beta,T)=\frac{1}{2\pi i}\int_{a+i}^{a+iT} \frac{\zeta^\prime(s)}{\zeta(s)}\zeta(s+\alpha)\zeta(1-s+\beta)Q(s)\overline{Q}(1-s)ds.
\een
and
\ben
\label{J}
J(\alpha, \beta, T)=\frac{1}{2\pi }\int_{1}^{T} \big[\log(t/2\pi)+O(1/t)]\zeta(\tfrac{1}{2}+\alpha+it)\zeta(\tfrac{1}{2}+\beta-it)|Q(\tfrac{1}{2}+it)|^2dt.
\een
We are thus required to show that $I(\alpha, \beta, T)=\mc{I}(\alpha,\beta,T)+O(T(\log T)^{-A})$ where $\mc{I}$ is given by \eqref{mc I}, and that 
$J(\alpha, \beta, T)=\mc{J}(\alpha, \beta, T)+O(T(\log T)^{-A})$ where $\mc{J}$ is given by \eqref{mc J}.

\subsection{Computing $J$}

The integral $J$ can be computed by integrating by parts and using well known formulas for the twisted second moment of the zeta function. In our case (with the shifts $\alpha,\beta$) these mean values have been considered by Pratt and Rubles \cite{PR}. After a slight rephrasing, their Theorem 1.1 states that
\begin{multline*}
        \int_{1}^{T} \zeta(\tfrac{1}{2}+\alpha+it)\zeta(\tfrac{1}{2}+\beta-it)|Q(\tfrac{1}{2}+it)|^2dt
 \\   = \sum_{g\leqs y}\sum_{\substack{h,k\leqs y/g\\(h,k)=1}}\frac{a(gh)\overline{a(gk)}}{g{hk}}
         \int_1^T 
         \bigg[
                 \frac{\zeta(1+\alpha+\beta)}{h^{\beta}k^{\alpha}}
              + \bigg(\frac{t}{2\pi}\bigg)^{-\alpha-\beta}
                 \frac{\zeta(1-\alpha-\beta)}{h^{-\alpha}k^{-\beta}}
         \bigg]dt
 \\     +O(T^{3/20}y^{33/20})+O(y^{1/2}T^{1/2+\epsilon}).
\end{multline*} 
Thus, after integrating by parts we see that $J$ is indeed given by $\mc{J}$ plus an acceptable error. 

\subsection{Computing $I$: Initial manipulations}
Instead of working directly with $I(\alpha,\beta,T)$ we work with  the integral 
\ben
\label{mc K}
\mc{K}=\mc{K}_{\alpha,\beta}(\gamma):=
\frac{1}{2\pi i}\int_{a+i}^{a+iT} \frac{\zeta(s+\gamma)}{\zeta(s)}\zeta(s+\alpha)\zeta(1-s+\beta)Q(s)\ol{Q}(1-s)ds
\een
so that 
\[
I(\alpha,\beta,T)=\frac{d}{d\gamma}\mc{K}(\gamma)\bigg|_{\gamma=0}.
\]
As with the other shifts we will assume throughout that $\gamma\ll1/\log T$ and derive our formula for $\mc{K}_{\alpha,\beta}(\gamma)$ with error terms uniform in $\gamma$. The differentiation can then be performed by applying Cauchy's formula over a circle of radius $\ll1/
\log T$. 

In \eqref{mc K} we apply the functional equation $\zeta(1-s+\beta)=\chi(s-\beta)\zeta(s-\beta)$ and then expand each term as a Dirichlet series to give
\be
\begin{split}
\mc{K} = & \sum_{m_1,m_2,m_3,m_4,h,k}\frac{\mu(m_1)a(h)\ol{a(k)}}{m_2^\gamma m_3^\alpha m_4^{-\beta} k}
                 \frac{1}{2\pi i}\int_{a+i}^{a+iT} \chi(s-\beta)\bigg(\frac{m_1m_2m_3m_4h}{k}\bigg)^{-s}ds
      \\  = &  \sum_{m_1,m_2,m_3,m_4,h,k}\frac{\mu(m_1)a(h)\ol{a(k)}}{m_1^\beta m_2^{\beta+\gamma}m_3^{\alpha+\beta}h^\beta k^{1-\beta}}
                 \frac{1}{2\pi i}\int_{a-\beta+i}^{a-\beta+iT} \chi(s)\bigg(\frac{m_1m_2m_3m_4h}{k}\bigg)^{-s}ds
      \\  = &  \sum_{\substack{m\\ k\leq y}}\frac{b(m)\ol{a(k)}}{m^\beta k^{1-\beta}}
                 \frac{1}{2\pi i}\int_{a-\beta+i}^{a-\beta+iT} \chi(s)\bigg(\frac{m}{k}\bigg)^{-s}ds.
\end{split}
\ee
where 
\ben\label{Def b}
b(m)=\sum_{\substack{m_1m_2m_3m_4h=m\\h\leqs y}}\mu(m_1)m_2^{-\gamma}m_3^{-\alpha}m_4^\beta a(h).
\een
Note that we have 
\begin{align*}
b(m)\ll \tau_4* a(m)\ll \tau_{r+4}(m)(\log m)^C.
\end{align*}
This integral can be evaluated by the following lemma.
\begin{lem}\label{Integral I}
	Suppose that $B(s)=\sum_{n}b(n)n^{-s}$ for $\sigma>1$ where $a(n)\ll \tau_{k_1}(n)(\log n)^{l_1}$ for some non-negative integers $k_1$ and $l_1$. Let $A(s)=\sum_{n\leq y}a(n)n^{-s}$, where $a(n)\ll \tau_{k_2}(\log n)^{l_2}$ for some non-negative integers $k_2$,  $l_2$ and $T^\epsilon\ll y \leq T$ for some $\epsilon>0$. Then 
	\begin{align*}
	&\frac{1}{2\pi i}\int_{c+i}^{c+iT}\chi(1-s)A(1-s)B(s)ds\\&=\sum_{k\leq y}\frac{a(k)}{k}\sum_{m\leq nT/2\pi}b(m)e(-m/k)+O(yT^{1/2}(\log T)^{k_1+k_2+l_1+l_2}),
	\end{align*} 
where $c=1+1/\log T$.
\end{lem}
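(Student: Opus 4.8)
The plan is to expand the two Dirichlet series, reduce everything to a one-variable integral handled by Gonek's classical lemma, and then bound the resulting error by elementary divisor-sum estimates.

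First I would write $A(1-s)=\sum_{k\leq y}a(k)k^{s-1}$, which is a finite sum and hence entire, and $B(s)=\sum_{m\geq 1}b(m)m^{-s}$, which converges absolutely on the line $\Re s=c=1+1/\log T>1$ since $b(m)\ll\tau_{k_1}(m)(\log m)^{l_1}$. On the segment from $c+i$ to $c+iT$, Stirling's formula gives the crude bound $\chi(1-s)\ll t^{c-1/2}\ll T^{1/2}$, so the $m$-summation may be interchanged with the integral. Pulling the finite $k$-sum out in front leaves
\[
\frac{1}{2\pi i}\int_{c+i}^{c+iT}\chi(1-s)A(1-s)B(s)\,ds=\sum_{k\leq y}\frac{a(k)}{k}\sum_{m\geq 1}b(m)\,\mathcal{J}(k,m),\qquad \mathcal{J}(k,m):=\frac{1}{2\pi i}\int_{c+i}^{c+iT}\chi(1-s)\Big(\frac{k}{m}\Big)^{s}\,ds,
\]
so the whole problem reduces to understanding $\mathcal{J}(k,m)$.

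Next I would invoke Gonek's lemma for $\tfrac{1}{2\pi i}\int_{c+i}^{c+iT}\chi(1-s)x^{s}\,ds$ (the key input, established via Stirling's formula together with a contour shift / stationary-phase argument; see \cite{G}), applied with $x=k/m$. The relevant phase has its stationary point at $t=2\pi/x=2\pi m/k$, which lies in $[1,T]$ precisely when $m\leq kT/2\pi$ (the harmless lower constraint $m\gg k$ being of no consequence). For such pairs Gonek's lemma yields $\mathcal{J}(k,m)=e(-m/k)+E(k,m)$, while for $m>kT/2\pi$ there is no stationary point and $\mathcal{J}(k,m)=E(k,m)$ is purely an error; in all cases $E(k,m)$ admits a bound of the shape $(k/m)^{c}T^{1/2}$, together with an extra $\min$-type term accounting for pairs in which the stationary point falls within a bounded factor of an endpoint of $[1,T]$. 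Re-inserting this into the double sum, the main terms assemble into $\sum_{k\leq y}\frac{a(k)}{k}\sum_{m\leq kT/2\pi}b(m)e(-m/k)$, exactly the claimed main term.

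It then remains to bound $\sum_{k\leq y}\frac{|a(k)|}{k}\sum_{m\geq 1}|b(m)|\,|E(k,m)|$. Using $|E(k,m)|\ll (k/m)^{c}T^{1/2}$ this is
\[
\ll T^{1/2}\Big(\sum_{k\leq y}|a(k)|k^{c-1}\Big)\Big(\sum_{m\geq 1}|b(m)|m^{-c}\Big).
\]
Here the $m$-sum is $\ll\zeta(c)^{k_1}(\log T)^{l_1}\ll(\log T)^{k_1+l_1}$, since $\zeta(c)\asymp\log T$ and differentiating in $c$ absorbs the factor $(\log m)^{l_1}$; and the $k$-sum is $\ll y^{c-1}\sum_{k\leq y}|a(k)|\ll y(\log y)^{k_2+l_2}\ll y(\log T)^{k_2+l_2}$, by partial summation together with $y^{c-1}=y^{1/\log T}\ll 1$. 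Multiplying out gives the stated error $O\big(yT^{1/2}(\log T)^{k_1+k_2+l_1+l_2}\big)$, and the $\min$-type contribution near $m\asymp kT/2\pi$ is smaller still (there $(k/m)^{c}\asymp T^{-1}$, which kills any $T^{\epsilon}$ from the coefficients). I expect the main obstacle to be the careful handling of Gonek's lemma in the transition range $m\asymp kT/2\pi$ — checking that pairs whose stationary point is near an endpoint of $[1,T]$ spoil neither the shape of the main term nor the size of the error — together with the bookkeeping that makes the powers of $\log T$ from the divisor-bounded coefficients combine to exactly $k_1+k_2+l_1+l_2$; both become routine once the appropriate quantitative form of Gonek's lemma is fixed.
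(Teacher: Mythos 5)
Your sketch is a correct outline of the standard argument, and it is essentially the same route as the cited source: the paper's ``proof'' of this lemma is literally just a reference to Lemma~2 of Conrey--Ghosh--Gonek~\cite{CGGSimple}, whose proof proceeds exactly as you describe --- expand $A(1-s)$ and $B(s)$, pull the finite $k$-sum and the absolutely convergent $m$-sum through the integral, and apply Gonek's lemma (stationary phase for $\int\chi(1-s)x^s\,ds$, stationary point at $t=2\pi m/k$, phase value $e(-m/k)$) to each inner integral, then sum the error terms. Your bookkeeping of the error is also essentially right (your $\sum_{k\le y}|a(k)|\ll y(\log y)^{k_2+l_2}$ is in fact wasteful by one power of $\log$, since $\sum_{k\le y}\tau_{k_2}(k)\ll y(\log y)^{k_2-1}$, but it still lands under the stated bound). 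The only place you are deliberately vague --- the precise quantitative form of Gonek's lemma, in particular the treatment of pairs $(k,m)$ whose stationary point sits near an endpoint of $[1,T]$ and of the small-$m$ range $m\ll k$ that lacks a stationary point but is still included in the stated main sum --- is exactly the content one must import from Gonek's 1984 paper; you flag this correctly rather than hand-waving past it. (One side remark: the paper's statement has typographical slips --- $m\le nT/2\pi$ should read $m\le kT/2\pi$, and the coefficient bounds should read $b(n)\ll\tau_{k_1}(n)(\log n)^{l_1}$, $a(n)\ll\tau_{k_2}(n)(\log n)^{l_2}$ --- which you silently corrected.)
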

\begin{proof}
	 See \cite[Lemma 2]{CGGSimple}.
\end{proof}
Applying Lemma \ref{Integral I} we get that 
\be
\begin{split}
\mc{I} = & \sum_{k\leqs y}\frac{\ol{a(k)}}{k^{1-\beta}}\sum_{m\leqs Tk/2\pi}\frac{b(m)e(-m/k)}{m^\beta}
                +O(yT^{1/2+\epsilon}).
\end{split}
\ee
Following \cite{CGGSimple}, we now express the additive character $e(-m/k)$ in terms of multiplicative characters. We write $m'=m/(m,k)$ and $k'=k/(m,k)$ and then
\begin{align*}
e(-m/k)=e(-m'/k')&=\frac{1}{\phi(k')}\sum_{\chi \pmod {k'}}\tau(\bar{\chi})\chi(-m'	)\\&=\frac{\mu(k')}{\phi(k')}+ \frac{1}{\phi(k')}\sum_{\chi \not=\chi_0\pmod {k'}}\tau(\bar{\chi})\chi(-m')
\end{align*} 
where $\tau(\chi)$ denotes the Gauss sum. 
The first term here will lead to the main term whilst the second term will give rise to the error. When computing the error term we wish to apply the large sieve and hence it is necessary to express the sum over characters in terms of primitive characters.
To this end we write 
\begin{align*}
\frac{1}{\phi(k')}\sum_{\chi \not =\chi_0\pmod {k'}}\tau(\bar{\chi})\chi(-m')
=
\frac{1}{\phi(k')}\sum_{q\mid k', q>1\,\,} \sideset{}{^*}\sum_{\psi \pmod q}\mu\left( \frac{k'}{q}\right)\bar{\psi}\left(\frac{k'}{q}\right)\tau(\bar{\psi})\psi(-m')
\end{align*}
where the $*$ denotes that the sum is over primitive characters. After an application of M\"obius inversion as in \cite{CGGSimple}, formula (5.10), we have 
\begin{align*}
&\frac{1}{\phi(k')}\sum_{\chi \not =\chi_0\pmod {k'}}\tau(\bar{\chi})\chi(-m')\\&=\sum_{d\mid (m,k)}\sum_{e\mid d}\frac{\mu(d/e)}{\phi(k/e)}\sum_{q\mid k/e, q>1\,\,}
\sideset{}{^*}\sum_{\psi \pmod q}\mu\left( \frac{k}{eq}\right)\bar{\psi}\left( \frac{k}{eq}\right)\tau(\bar{\psi})\psi\left( -\frac{m}{e}\right)\\
&=\sum_{q\mid k , q>1\,\,} \sideset{}{^*}\sum_{\psi \pmod q}\tau(\bar{\psi})\sum_{d\mid (m,k)}\psi\left( \frac{m}{d}\right)\delta(q, k, d, \psi),
\end{align*}
where 
\begin{align*}
\delta(q, k, d, \psi)=\sum_{\substack{e\mid d\\ e\mid k/q}}\frac{\mu(d/e)}{\phi(k/e)}\bar{\psi}\left(- \frac{k}{eq}\right)\psi\left( \frac{d}{e}\right)\mu\left( \frac{k}{eq}\right).
\end{align*}
Therefore, we can write 
\ben\label{I decomp}
\mc{I}=\mc{M}+\mc{E}+O(yT^{1/2+\epsilon}),
\een
where 
\begin{align*}
&\mc{M}=\sum_{k\leq y}\frac{\ol{a(k)}}{k^{1-\beta}}\sum_{m\leq Tk/2\pi}\frac{b(m)}{m^\beta}\frac{\mu(k/(m,k))}{\phi(k/(m,k))},\\
&\mc{E}=\sum_{k\leq y}\frac{\ol{a(k)}}{k^{1-\beta}}\sum_{m\leq Tk/2\pi}\frac{b(m)}{m^\beta}\sum_{\substack{q\mid k\\ q>1}}
\sideset{}{^*}\sum_{\psi \pmod q}\tau(\bar{\psi})\sum_{d\mid (m,k)}\psi\left( \frac{m}{d}\right)\delta(q, k, d, \psi).
\end{align*}


\subsection{Computing the main term $\mc{M}$} 
We compute $\mathcal{M}$ essentially by applying Perron's formula to the inner sum although there are some arithmetic complications to deal with. We first unfold the definition of $b(m)$ 
\[
	\mc{M} =  \sum_{h,k\leqs y}\frac{a(h)\ol{a(k)}}{h^\beta k^{1-\beta}}\sum_{n\leqs Tk/2\pi h}\frac{c(n)}{n^\beta}\frac{\mu(k/(nh,k))}{\phi(k/(nh,k))},  
\]
where 
\ben
\label{c(n)}
c(n)=\sum_{\substack{n_1n_2n_3n_4=n}}\mu(n_1)n_2^{-\gamma}n_3^{-\alpha}n_4^\beta.
\een
We then group the terms $h,k$ according to their greatest common divisor $g=(h,k)$ and acquire the formula

\begin{align*}
\mathcal{M} = &\sum_{g\leq y} 
\sum_{\substack{h,k\leq y/g\\ (h,k)=1}} \frac{a(gh)\ol{a(gk)}}{gh^\beta k^{1-\beta}}
\sum_{n\leq Tk/2\pi h} \frac{c(n)}{n^\beta}\frac{\mu(k/(n,k))}{\phi(k/(n,k))}.
\end{align*}
On grouping together terms for which $(n,k)=d$ we get  
\begin{align}   
	 \label{final M}      
\mc{M} 
= \sum_{g\leq y} 
\sum_{\substack{h\leq y/g}} \frac{a(gh)}{gh^\beta }
\sum_{d\leq y/g}
\sum_{\substack{k\leq y/dg\\ (h,dk)=1}}\frac{\ol{a(gdk)}}{dk^{1-\beta}}\frac{\mu(k)}{\phi(k)}
\sum_{\substack{n\leq Tk/2\pi h\\(n,k)=1}}\frac{c(dn)}{n^\beta}.
\end{align}
To encode the dependence of $d$ in the innermost sum, we use the following lemma.
\begin{lem}\label{DecompositionArithmeticFunction}
	Let $j, D\in \mathbb{N}$ and let $f_1, \dots, f_j$ be arithmetic functions. Given a decomposition of integers $D=d_1\cdots d_j$, define $D_i=\prod_{u=1}^{j-i}d_u$ for $1\leq i \leq j-1$ and $D_j=1$. The following identities hold:
	\begin{align*}
	&\sum_{\substack{m\leq x\\ (m,k)=1}}(f_1*f_2* \cdots *f_j)(mD)
	=\sum_{d_1 \cdots d_j=D}\sum_{\substack{m_1\cdots m_j\leq x\\ (m_i,kD_i)=1}}f_1(m_1d_j)f_2(m_2d_{j-1})\cdots f(m_jd_1),\\
	&\sum_{(m,k)=1}\frac{(f_1* f_2* \cdots *f_j)(mD)}{m^s}=\sum_{d_1 \cdots d_j=D}\prod_{i=1}^j\sum_{(m_i, kD_i)=1}\frac{f(m_id_{j+1-i})}{m_i^s}.
	\end{align*}
\end{lem}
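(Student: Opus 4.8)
The plan is to prove both displayed identities at once by induction on $j$, the crux being a $\gcd$-based splitting of the divisors of $mD$. The base case $j=1$ is immediate: then $D_1=1$ and the only factorisation is $d_1=D$, so each side collapses to $\sum_{m\le x,\,(m,k)=1}f_1(mD)$ (respectively its Dirichlet series analogue).

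For the inductive step I would write $g=f_2*\cdots*f_j$, so that $f_1*\cdots*f_j=f_1*g$, and first record the elementary fact that, whenever $(m,k)=1$, the map $a\mapsto(d_j,m_1):=(\gcd(a,D),\,a/\gcd(a,D))$ is a bijection from $\{a:a\mid mD\}$ onto $\{(d_j,m_1):d_j\mid D,\ m_1\mid m,\ (m_1,D/d_j)=1\}$. The only point needing a check is that $m_1\mid m$: writing $v_p$ for the $p$-adic valuation, one has $v_p(m_1)=\max(0,\,v_p(a)-v_p(D))\le v_p(m)$ for every prime $p$, since $a\mid mD$. As $a=m_1d_j$ and $mD/a=(m/m_1)(D/d_j)$, this bijection gives, for any $m$ coprime to $k$,
\[
(f_1*g)(mD)=\sum_{d_j\mid D}\ \sum_{\substack{m_1\mid m\\ (m_1,D/d_j)=1}}f_1(m_1d_j)\,g\!\Big(\tfrac{m}{m_1}\cdot\tfrac{D}{d_j}\Big).
\]

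Next I would sum this over $m\le x$ with $(m,k)=1$ and substitute $m=m_1m'$; since $(m_1,D/d_j)=1$, the pair $(m_1,m')$ ranges freely over integers with $m_1m'\le x$, $(m_1m',k)=1$ and $(m_1,D/d_j)=1$, which after using $(m_1m',k)=1\iff(m_1,k)=1$ and $(m',k)=1$ yields
\[
\sum_{\substack{m\le x\\(m,k)=1}}(f_1*g)(mD)=\sum_{d_j\mid D}\ \sum_{\substack{m_1\\(m_1,kD/d_j)=1}}f_1(m_1d_j)\sum_{\substack{m'\le x/m_1\\(m',k)=1}}g\!\Big(m'\,\tfrac{D}{d_j}\Big).
\]
Now I would apply the inductive hypothesis to the inner sum, with the factorisation $D/d_j=d_1\cdots d_{j-1}$ and truncation $x/m_1$. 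Its coprimality conditions are $(m_{i+1},k\,d_1\cdots d_{j-1-i})=1$ for $1\le i\le j-1$, which after the shift $i\mapsto i+1$ become $(m_i,kD_i)=1$ for $2\le i\le j$; and since $D_1=d_1\cdots d_{j-1}=D/d_j$, the condition on $m_1$ is precisely $(m_1,kD_1)=1$. Merging $\sum_{d_j\mid D}\sum_{d_1\cdots d_{j-1}=D/d_j}$ into $\sum_{d_1\cdots d_j=D}$ and combining $\sum_{m_1}\sum_{m_2\cdots m_j\le x/m_1}=\sum_{m_1\cdots m_j\le x}$ then recovers the first identity; the second follows by the identical induction with the truncation replaced by the weight $n^{-s}$ and the relation $m^{-s}=m_1^{-s}(m')^{-s}$ used to separate the variables, so the inner sums over the $m_i$ factor into the asserted product.

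I expect the only real obstacle to be bookkeeping: verifying that the $\gcd$-splitting genuinely satisfies $m_1\mid m$ (so the recursion closes on a sum of the same shape) and that the nested conditions $(m_i,kD_i)=1$ line up correctly after the reindexing. It is worth noting that no multiplicativity of the $f_i$ is used — the argument is a bijection of index tuples, with the values of the $f_i$ carried along unchanged.
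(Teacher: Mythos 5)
Your proof is correct. The paper does not write out an argument here (it cites Lemma~3 of Conrey--Ghosh--Gonek \cite{CGGSimple} for the second identity and says the first follows by the same method), and your induction on $j$ via the bijection $a \mapsto \big(\gcd(a,D),\, a/\gcd(a,D)\big)$ on divisors of $mD$ --- together with the $p$-adic check that $m_1 = a/\gcd(a,D)$ divides $m$ and the verification that $\gcd(m_1 d_j, D) = d_j$ when $(m_1, D/d_j)=1$, which makes the splitting a genuine bijection and lines the conditions $(m_i, kD_i)=1$ up correctly after reindexing --- is exactly the standard proof of that lemma.
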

\begin{proof}The second identity is Lemma 3 of \cite{CGGSimple}. The first identity follows from the same method of proof.
\end{proof}
From Lemma \ref{DecompositionArithmeticFunction} we see that the corresponding Dirichlet series may be written as
\begin{equation}
\begin{split}
\label{dirichlet series}
&\sum_{(n,k)=1}  \frac{c(nd)}{n^{s+\beta}}\\
& = \sum_{d_1d_2d_3d_4=d}\sum_{(m_1,kd_1d_2d_3)=1}\mu(m_1d_4)\sum_{ (m_2,kd_1d_2)=1} (m_2d_3)^{-\gamma}\sum_{ (m_3, kd_1)=1} (m_3d_2)^{-\alpha}
\\ 
& \qquad \times \sum_{ (m_4, k)=1} (m_4d_1)^\beta(m_1m_2m_3m_4)^{-(s+\beta)}
\\
&  =  \sum_{d_1d_2d_3d_4=d} \mu(d_4)d_3^{-\gamma}d_2^{-\alpha}d_1^\beta \sum_{(m_1, kd)=1}\mu(m_1) m_1^{-s-\beta} \sum_{(m_2,kd_1d_2)=1}m_2^{-\gamma-s-\beta}
\\  
&   \qquad \times  \sum_{ (m_3, kd_1)=1} m_3^{-\alpha-s-\beta}\sum_{(m_4, k)=1} m_4^{-s}
\\
& = \frac{\zeta(s)\zeta(s+\gamma+\beta)\zeta(s+\alpha+\beta)}{\zeta(s+\beta)}G(s,k,d)
\end{split}
\end{equation}
where 
\begin{multline}\label{G}
    G(s,k,d)  =  \sum_{d_1d_2d_3d_4=d} \mu(d_4)d_3^{-\gamma}d_2^{-\alpha}d_1^\beta
                       \prod_{p\mid kd} (1-p^{-s-\beta})^{-1}
        \\  \times  \prod_{p\mid kd_1d_2}(1-p^{-s-\gamma-\beta})
                       \prod_{p\mid k d_1} (1-p^{-\alpha-s-\beta})
                       \prod_{p\mid k}(1-p^{-s}).
\end{multline}
Note that $G(s,k,d)$ is holomorphic in the region $\sigma>0$ and that for $\sigma\geqs 1/2$, say, we have the bound 
\ben
\label{G3 bound}
G(s,k,d)\ll \tau_4(d)\prod_{p|kd}\Big(1+10p^{-\sigma}\Big)\ll \tau_4(d)\tau(kd)
\een
since the shifts are all $\ll 1/\log T$ and $d,k\ll T$. 
Also, note by changing the role of $d_2,d_3, d_4$, we can write 
\begin{multline}\label{G2}
G(s,k,d)  =  \sum_{d_1d_2d_3d_4=d} \mu(d_4)d_3^{-\gamma}d_2^{\beta}d_1^{-\alpha}
\prod_{p\mid kd} (1-p^{-s-\beta})^{-1}
\\  \times  \prod_{p\mid kd_1d_2}(1-p^{-s-\gamma-\beta})
\prod_{p\mid k d_1} (1-p^{-s})
\prod_{p\mid k}(1-p^{-s-\alpha-\beta}).
\end{multline}
as well as 
\begin{multline}\label{G3}
G(s,k,d)  =  \sum_{d_1d_2d_3d_4=d} \mu(d_4)d_3^{\beta}d_2^{-\alpha}d_1^{-\gamma}
\prod_{p\mid kd} (1-p^{-s-\beta})^{-1}
\\  \times  \prod_{p\mid kd_1d_2}(1-p^{-s})
\prod_{p\mid k d_1} (1-p^{-\alpha-s-\beta})
\prod_{p\mid k}(1-p^{-s-\gamma-\beta}).
\end{multline}
These alternative formulations will be useful when recovering the second and third $Z$ terms of \eqref{mc I}.

\subsubsection{Perron's formula} We employ the following version of Perron's formula to evaluate the innermost sum in $\mathcal{M}$.

\begin{lem}\label{Perron}[Theorem 2.1 \cite{LiuYe}]
	Let $f(s)=\sum_{n=1}^\infty a_nn^{-s}$ be a Dirichlet series with abscissa of absolute convergence $\sigma_a$. Let  \begin{align}
	B(\sigma)=\sum_{n=1}^\infty \frac{|a_n|}{n^\sigma},
	\end{align}
	for $\sigma>\sigma_a$. Then for $\kappa> \sigma_a$, $x\geq 2$, $U\geq 2$, and $H\geq 2$, we have 
	\begin{align}
	\sum_{n\leq x} a_n= \frac{1}{2\pi i}\int_{\kappa-iU}^{\kappa+iU} f(s)\frac{x^s}{s}ds+ O \bigg( \sum_{ x-x/H \leq n \leq x+x/H}|a_n|\bigg)+ O \left( \frac{x^\kappa H B(\kappa)}{U}\right).
	\end{align}
\end{lem}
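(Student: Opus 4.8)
The plan is to deduce this from the classical truncated Perron kernel estimate together with Fubini. First I would record the kernel bound: for $y>0$, $\kappa>0$ and $U\geq 2$,
\[
\frac{1}{2\pi i}\int_{\kappa-iU}^{\kappa+iU}\frac{y^{s}}{s}\,ds=\delta(y)+E(y),\qquad
\delta(y)=\begin{cases}1,&y>1,\\ \tfrac12,&y=1,\\ 0,&y<1,\end{cases}
\]
where $E(y)\ll y^{\kappa}\min\!\bigl(1,(U|\log y|)^{-1}\bigr)$ for $y\neq 1$ and $E(1)\ll \kappa/U$. This is proved in the usual way: for $y>1$ shift the contour to $\Re s\to-\infty$, pick up the residue $1$ at $s=0$, and bound the two horizontal edges by $\ll U^{-1}\int_{-\infty}^{\kappa}y^{\sigma}\,d\sigma=y^{\kappa}/(U\log y)$; for $0<y<1$ shift instead to $\Re s\to+\infty$ and argue symmetrically; the uniform bound $E(y)\ll y^{\kappa}$ (needed only when $U|\log y|\leq 1$) and the case $y=1$ follow from a direct estimate of the integral exploiting the oscillation of $y^{it}$ (at $y=1$ the integral equals $\pi^{-1}\arctan(U/\kappa)=\tfrac12+O(\kappa/U)$ exactly).

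Granting the kernel bound, note that $\sum_{n\leq x}a_{n}=\sum_{n=1}^{\infty}a_{n}\delta(x/n)$ up to a discrepancy of at most $\tfrac12|a_{x}|$ in the case $x\in\mathbb Z$, which is harmless since then $n=x$ lies in the interval $I=[x-x/H,\,x+x/H]$. Because $\sum_{n}|a_{n}|(x/n)^{\kappa}=x^{\kappa}B(\kappa)<\infty$ and the kernel is integrated only over the bounded segment $[\kappa-iU,\kappa+iU]$, Fubini permits interchanging the sum and the integral:
\[
\frac{1}{2\pi i}\int_{\kappa-iU}^{\kappa+iU}f(s)\frac{x^{s}}{s}\,ds
=\sum_{n=1}^{\infty}a_{n}\,\frac{1}{2\pi i}\int_{\kappa-iU}^{\kappa+iU}\frac{(x/n)^{s}}{s}\,ds
=\sum_{n\leq x}a_{n}+O\bigl(|a_{x}|\bigr)+\sum_{n=1}^{\infty}a_{n}E(x/n).
\]
Thus the whole problem reduces to estimating the error sum $\sum_{n}a_{n}E(x/n)$.

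To bound $\sum_{n}a_{n}E(x/n)$ I would split according to whether $n\in I$. For $n\notin I$ one has $|\log(x/n)|\gg 1/H$, whence $|E(x/n)|\ll (x/n)^{\kappa}\min(1,H/U)\leq (x/n)^{\kappa}H/U$; summing over all such $n$ gives $\ll x^{\kappa}HB(\kappa)/U$, the second error term. For $n\in I$ one has $x/n\asymp 1$, so $|E(x/n)|\ll (x/n)^{\kappa}\ll 1$, and these $n$ contribute $\ll\sum_{n\in I}|a_{n}|$, the first error term (into which the stray $O(|a_{x}|)$ is absorbed, as $x\in I$). Adding the two ranges yields the lemma. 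I expect the only genuinely delicate point to be the kernel estimate near $y=1$ — securing a bound that stays uniform as $y\to 1$ (the ``$\min(1,\cdot)$'' shape), which is precisely what produces the short boundary sum $\sum_{n\in I}|a_{n}|$ rather than a weaker logarithmic loss; the contour shift, the Fubini interchange, and the two-range split are all routine.
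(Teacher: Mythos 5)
The paper itself does not prove this lemma; it is cited verbatim from Liu--Ye (Theorem 2.1 of \cite{LiuYe}), so there is no ``paper proof'' to compare against. Your argument is the standard derivation of the truncated Perron formula and is essentially correct: the kernel bound $E(y)\ll y^{\kappa}\min\bigl(1,(U|\log y|)^{-1}\bigr)$ (with $E(1)\ll\kappa/U$) is classical, Fubini is justified by the absolute convergence $\sum_n|a_n|(x/n)^{\kappa}=x^{\kappa}B(\kappa)<\infty$ over a bounded contour, and the two-range split $n\in I$ versus $n\notin I$ with $I=[x-x/H,\,x+x/H]$ produces exactly the two stated error terms (for $n\notin I$ one indeed has $|\log(x/n)|\gg 1/H$, and for $n\in I$ the factor $(x/n)^{\kappa}$ is bounded since $H\geqslant 2$ forces $x/n\leqslant 2$).

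One small point worth flagging: in the range $n\in I$ you invoke $(x/n)^{\kappa}\ll 1$, which really means $\ll 2^{\kappa}$ and hence requires the implied constant to depend on $\kappa$ (or an assumption like $\kappa\ll H$). This is harmless in every application in the paper, where $\kappa$ is bounded, but strictly speaking the lemma's $O$-constant should be understood to depend on $\kappa$, consistent with the appearance of $B(\kappa)$ in the other error term. Likewise, the uniform bound $E(y)\ll y^{\kappa}$ near $y=1$ is the one step you gloss over (``exploiting the oscillation''); it does hold, but it is the only part of the kernel estimate that genuinely needs the cancellation in $\int_{-U}^{U}t\sin(t\log y)/(\kappa^2+t^2)\,dt$ rather than a crude absolute-value bound, so a careful write-up would spell this out.
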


Applying Lemma \ref{Perron} with $U=\exp (c\sqrt{\log T}), H=\sqrt{U}$ and $\kappa=1+1/\log T$ we find that 
\be
\begin{split}
	\sum_{\substack{n\leqs Tk/2\pi h\\(n,k)=1}}\frac{c(dn)}{n^\beta}
	= &  \frac{1}{2\pi i}\int_{\kappa-iU}^{\kappa+iU}
	\frac{\zeta(s)\zeta(s+\alpha+\beta)\zeta(s+\beta+\gamma)}{\zeta(s+\beta)}G(s,k,d)
	\bigg(\frac{Tk}{2\pi h}\bigg)^s\frac{ds}{s} \\
	+ & O\bigg(\sum_{ \frac{kT}{2\pi h}-\frac{kT}{2\pi h\sqrt{U}} \leq n \leq \frac{kT}{2\pi h}+ \frac{kT}{2\pi h \sqrt{U}}}\left|\frac{c(dn)}{n^\beta}\right|\bigg)   
	+    O\bigg(\frac{Tk}{h}\frac{(\log T)^C}{\sqrt{U}}\bigg)
\end{split}
\ee
From \eqref{c(n)}, we have $|c(n)|\ll \tau_4(n)$ for $n\leqs T^{O(1)}$, and thus by Shiu's bound for short divisor sums \cite[Theorem 2]{S} we have
\begin{align*}
\sum_{ \frac{kT}{2\pi h}-\frac{kT}{2\pi h\sqrt{U}} \leq n \leq \frac{kT}{2\pi h}+ \frac{kT}{2\pi h \sqrt{U}}}\left|\frac{c(dn)}{n^\beta}\right| 
\ll &
 \sum_{ \frac{kT}{2\pi h}-\frac{kT}{2\pi h\sqrt{U}} \leq n \leq \frac{kT}{2\pi h}+ \frac{kT}{2\pi h \sqrt{U}}} \tau_4(d)\tau_4(n)
\\
\ll & \tau_4(d)\frac{kT}{h\sqrt{U}}(\log T)^4.
\end{align*}
Therefore the error terms contribute to $\mathcal{M}$ at most 
\begin{align*}
&\sum_{g\leq y}\sum_{h\leq y/g}\frac{|a(gh)|}{gh^\beta}\sum_{d\leq y/g}\sum_{k\leq y/dg}\frac{|a(gdk)|}{dk^{1-\beta}}\frac{\tau_4(d)}{\phi(k)} \frac{Tk}{h\sqrt{U}}(\log T)^C\\
& \ll  \sum_{g\leq y}\frac{|a(g)|^2}{g}\sum_{h\leq y}\frac{|a(h)|}{h} \sum_{d\leq y/g}\frac{|a(d)|\tau_4(d)}{d}\sum_{k\leq y/dg}\frac{|a(k)|}{k}\frac{T}{\sqrt{U}}(\log T)^{C'}\\
& \ll \frac{T}{\sqrt{U}}(\log T)^{c''}
\end{align*}
where we have used that $a(n)\ll \tau_r(n)(\log n)^C$ and $a(mn)\ll |a(m)a(n)|$.

Moving the contour to the line $\sigma=1-\frac{c}{\log U}$, we encounter three poles. By \eqref{G3 bound} and since $\zeta(s)^{\pm 1}\ll \log U$ in the zero free region, the integral over the left edge of the contour leads to a contribution
\begin{multline*}
(\log U)^4\sum_{g\leq y}\sum_{h\leq y/g}\frac{|a(gh)|}{gh^\beta}\sum_{d\leq y/g}\sum_{k\leq y/dg}\frac{|a(gdk)|}{dk^{1-\beta}} \frac{\tau_4(d)\tau(kd)}{\phi(k)} \Big(\frac{Tk}{h}\Big)^{1-\frac{c}{\log U}}
\\
\ll 
T\exp (-c\sqrt{\log T}).
\end{multline*}
The integral over the horizontal lines contribute at most 
\begin{multline*}
\frac{(\log U)^4}{U}\sum_{g\leq y}\sum_{h\leq y/g}\frac{|a(gh)|}{gh^\beta}\sum_{d\leq y/g}\sum_{k\leq y/dg}\frac{|a(gdk)|}{dk^{1-\beta}}\frac{\tau_4(d)\tau(kd)}{\phi(k)}\Big(\frac{Tk}{h}\Big)
\\
\ll T \exp (-c\sqrt{\log T}).
\end{multline*}
 Therefore, we arrive at the following:
\begin{multline}
\label{res comp}
        \mc{M}  =  \sum_{g\leq y} 
               \sum_{\substack{h\leq y/g}} \frac{a(gh)}{gh^\beta }
               \sum_{d\leq y/g}
               \sum_{\substack{k\leq y/dg\\ (h,dk)=1}}\frac{\ol{a(gdk)}}{dk^{1-\beta}}\frac{\mu(k)}{\phi(k)}
               \sum_{\substack{z=1\\z=1-\alpha-\beta\\z=1-\beta-\gamma}}\text{res}_{s=z}(\mc{F}(s))
\\               +O(T\exp(-c\sqrt{\log T})),
\end{multline}  
where
\[
\mc{F}(s)=\frac{\zeta(s)\zeta(s+\alpha+\beta)\zeta(s+\beta+\gamma)}{\zeta(s+\beta)}G(s,k,d)
         \frac{1}{s} \bigg(\frac{Tk}{2\pi h}\bigg)^s.
\]


\subsubsection{Computing the residues} 
Let us first analyse the contribution from the residue at $s=1$. This is given by
\begin{multline*}
   \frac{T}{2\pi}
   \sum_{g\leqs y}\sum_{\substack{h\leqs y/g}}\frac{a(gh)}{gh^{1+\beta}}\sum_{d\leqs y/g}
   \sum_{\substack{k\leqs y/dg\\(h,dk)=1}}\frac{\ol{a(gdk)}}{dk^{-\beta}}\frac{\mu(k)}{\phi(k)}
   \frac{\zeta(1+\alpha+\beta)\zeta(1+\beta+\gamma)}{\zeta(1+\beta)}G(1,k,d) 
   \\
 =    \sum_{g\leqs y}\sum_{\substack{h,k^\prime\leqs y/g\\(h,k^\prime)=1}}\frac{a(gh)\ol{a(gk^\prime)}}{ghk^\prime}
          \int_0^{T/2\pi} \frac{1}{h^\beta}  \frac{\zeta(1+\alpha+\beta)\zeta(1+\beta+\gamma)}{\zeta(1+\beta)}  \\
     \times     \sum_{kd=k^\prime} k^{\beta}\frac{k}{\phi(k)}\mu(k) G(1,k,d) dt.
\end{multline*}
We will show that the integrand is given by $Z_{\alpha,\beta,\gamma,h,k}$. From the definition of $Z$ we are required to show that 
\ben
\label{identity}
\sum_{kd=k^\prime} k^{\beta}\frac{k}{\phi(k)}\mu(k) G(1,k,d)
=
\prod_{p^{{k_p^\prime}} || k^\prime}\frac{\sum_{m\geqs 0}f_{\alpha,\gamma}(p^{m+{k_p^\prime}})p^{-m(1+\beta)}}{\sum_{m\geqs 0}f_{\alpha,\gamma}(p^{m})p^{-m(1+\beta)}}
\een
where we recall that 
\begin{align*}
f_{\alpha,\gamma}(n) = & \sum_{n_1n_2n_3=n}\mu(n_1)n_2^{-\alpha}n_3^{-\gamma}.
\end{align*}

To prove this identity we first manipulate the right hand side of \eqref{identity}. Note that 
\begin{align*}
f_{\alpha,\gamma}(p^{m})=\frac{p^{-m\gamma}-p^{-\alpha(m+1)+\gamma}-p^{-(m-1)\gamma}+p^{-\alpha m+\gamma}}{1-p^{\gamma-\alpha}}
\end{align*}
and hence
\begin{align*}
\prod_{p\mid k^\prime} \sum_{m\geq 0}f_{\alpha, \gamma}(p^{m})p^{-m(1+\beta)}=\prod_{p\mid k^\prime} \frac{(1-p^{-(1+\beta)})}{(1-p^{-(1+\alpha+\beta)})(1-p^{-(1+\gamma+\beta)})}
\end{align*}
and 
\begin{align*}
&\prod_{p^{k^\prime_p}\| k^\prime} \sum_{m\geq 0} f_{\alpha, \gamma}(p^{m+k^\prime_p})p^{-m(1+\beta)}\\
&=\prod_{p^{k^\prime_p}\| k^\prime} \frac{p^{\alpha +\beta +\gamma +1} \left(\frac{\left(p^{\gamma }-1\right) p^{-\gamma {k^\prime_p}}}{p^{\beta +\gamma+1}-1}-\frac{\left(p^{\alpha }-1\right) p^{-\alpha  {k^\prime_p}}}{p^{\alpha +\beta +1}-1}\right)}{p^{\gamma }-p^{\alpha }}\\
&=\prod_{p^{k^\prime_p}\| k^\prime}\bigg(\frac{p^{-\gamma k^\prime_p}(1-p^{-\gamma})}{(1-p^{-(1+\gamma+\beta)})(p^{-\alpha}-p^{-\gamma})}-\frac{p^{-\alpha k^\prime_p}(1-p^{-\alpha})}{(1-p^{-(1+\alpha+\beta)})(p^{-\alpha}-p^{-\gamma})}\bigg).
\end{align*}
Therefore, the right hand side of \eqref{identity} is given by
\begin{align}
\label{rhs}
\prod_{p\mid k^\prime} \bigg(p^{-\alpha k^\prime_p}\frac{(1-p^{-(1+\gamma+\beta)})(1-p^{-\alpha})}{(1-p^{-(1+\beta)})(p^{-\gamma}-p^{-\alpha})}+p^{-\gamma k^\prime_p} \frac{(1-p^{-(1+\alpha+\beta)})(1-p^{-\gamma})}{(1-p^{-(1+\beta)})(p^{-\alpha}-p^{-\gamma})}\bigg).
\end{align}

For the left hand side, inputting the definition of $G$ given in \eqref{G} we find 
\begin{align*}
\sum_{kd=k^\prime} k^{\beta}\frac{k}{\phi(k)}\mu(k) G(1,k,d)
= &
\prod_{p\mid k^\prime} (1-p^{-1-\beta})^{-1}
\sum_{kd=k^\prime} \mu(k)
\sum_{d_1d_2d_3d_4=d} \mu(d_4)d_3^{-\gamma}d_2^{-\alpha}(kd_1)^\beta
\\  
 & \times  
\prod_{p\mid kd_1d_2}(1-p^{-1-\gamma-\beta})
\prod_{p\mid k d_1} (1-p^{-1-\alpha-\beta}).                    
\end{align*}
We first combine the two sums and write them as a single sum over the condition $kd_1d_2d_3d_4=k^\prime$. We then write $kd_1$ as $\ell$ and acquire
 \begin{align*}
\prod_{p\mid k^\prime} (1-p^{-1-\beta})^{-1}
\sum_{\ell d_2d_3d_4=k^\prime} 
\mu(d_4)d_3^{-\gamma}d_2^{-\alpha}\ell^\beta
 \prod_{p\mid \ell d_2}(1-p^{-1-\gamma-\beta})
 \prod_{p\mid \ell} (1-p^{-1-\alpha-\beta})  
 \sum_{kd_1=\ell}\mu(k)
 \\
 =
 \prod_{p\mid k^\prime} (1-p^{-1-\beta})^{-1}
\sum_{d_2d_3d_4=k^\prime} 
\mu(d_4)d_3^{-\gamma}d_2^{-\alpha}
\prod_{p\mid  d_2}(1-p^{-1-\gamma-\beta}).
 \end{align*}
For the sum over $d_i$ we have 
\begin{align*}
\sum_{d_2d_3d_4=k^\prime}&\mu(d_4)d_3^{-\gamma}d_2^{-\alpha} \prod_{p\mid d_2}(1-p^{-(1+\gamma+\beta)})\\
&=\prod_{ p^{k^\prime_p}\| k^\prime} \bigg(\sum_{1\leq m\leq k^\prime_p} p^{-\gamma(k^\prime_p-m)}p^{-\alpha m}(1-p^{-(1+\gamma+\beta)})\\
&\ \ \ \ \ \ \ \ \ \ \ \ -\sum_{1\leq m\leq k^\prime_p-1} p^{-\gamma(k^\prime_p-1-m)}p^{-\alpha m}(1-p^{-(1+\gamma+\beta)})\\
&\ \ \ \ \ \ \ \ \ \ \ \ +p^{-\gamma k^\prime_p}-p^{-\gamma(k^\prime_p-1)}\bigg)\\
&=\prod_{p^{k^\prime_p}\| k^\prime} \bigg(p^{-\alpha k^\prime_p}\frac{(1-p^{-(1+\gamma+\beta)})(1-p^{-\alpha})}{(p^{-\gamma}-p^{-\alpha})}+p^{-\gamma k^\prime_p} \frac{(1-p^{-(1+\alpha+\beta)})(1-p^{-\gamma})}{(p^{-\alpha}-p^{-\gamma})}\bigg)
\end{align*}
and thus after multiplying by $\prod_{p\mid k^\prime} (1-p^{-1-\beta})^{-1}$ this is equal to \eqref{rhs}. Equation \eqref{identity} then follows.

When computing the residue at $s=1-\alpha-\beta$ in \eqref{res comp}, we get a factor of 
\[
\frac{1}{1-\alpha-\beta}\bigg(\frac{T}{2\pi}\bigg)^{1-\alpha-\beta}=\int_0^{T/2\pi} \bigg(\frac{t}{2\pi}\bigg)^{-\alpha-\beta}dt.
\]
In the arithmetic sums we see that the effect of changing $s$ from 1 to $1-\alpha-\beta$ is to interchange $\alpha$ with $-\beta$ and $\beta$ with $-\alpha$ after using the expression  for $G(s, k, d)$ given in \eqref{G2}. This is precisely the behaviour of the second $Z$ term in \eqref{twisted moment eq} and hence we acquire this term. Likewise, for the residue at $s=1-\beta-\gamma$, the effect of changing $s$ from 1 to $1-\beta-\gamma$ is to interchange $\gamma$ with $-\beta$ and $\beta$ with $-\gamma$ after using the expression \eqref{G3} for $G(s, k, d)$. This gives the third and final $Z$ term.


\subsection{Bounding the error term $\mathcal{E}$}
In this section we give unconditional bounds for $\mathcal{E}$ when $a(n)$ satisfies the following properties 
\begin{align}
a(mn)\ll |a(m)a(n)|, \ \ a(n)\ll \tau_r(n)(\log n)^C.
\end{align}
The error term $\mc{E}$ in \eqref{I decomp} can be rewritten as 
\begin{align*}
\mathcal{E}
&=\sum_{2\leq q\leq y} \sideset{}{^*}\sum_{\psi\pmod q}\tau(\bar{\psi}) \sum_{k\leq y/q}\frac{a(kq)}{(kq)^{1-\beta}}\sum_{d\mid kq}\delta(q,kq,d,\psi)\sum_{m\leq Tkq/2\pi d}\frac{b(md)\psi(m)}{(md)^\beta}\\
&=:\mathcal{E}_1+\mathcal{E}_2
\end{align*}
where 
\begin{align}
&\mathcal{E}_1= \sum_{2\leq q\leq \eta} \sideset{}{^*}\sum_{\,\,\psi \pmod q} \tau(\bar{\psi}) \sum_{kq\leq y} \frac{a(kq)}{(kq)^{1-\beta}}\sum_{d\mid kq} \delta(q, kq, d, \psi)\sum_{m\leq Tkq/2\pi d}\frac{b(md)\psi(m)}{(md)^\beta}\\
&\mathcal{E}_2=\sum_{\eta \leq q\leq y} \sideset{}{^*}\sum_{\,\,\psi \pmod q} \tau(\bar{\psi}) \sum_{kq\leq y} \frac{a(kq)}{(kq)^{1-\beta}}\sum_{d\mid kq} \delta(q, kq, d, \psi)\sum_{m\leq Tkq/2\pi d}\frac{b(md)\psi(m)}{(md)^\beta}
\end{align}
We use Siegel's theorem to bound $\mathcal{E}_1$ and the large sieve inequalities to bound $\mathcal{E}_2$.

\begin{prop}[Small moduli]\label{E1}
If $a(nm)\ll |a(m)a(n)|$ and $a(n)\ll \tau_r(n)(\log n)^C$.	Then for $\eta\ll (\log T)^{A}$, we have
	\begin{align}
	\mathcal{E}_1\ll T\exp(-c\sqrt{\log T})\eta^{3/2+\epsilon}
	\end{align}
\end{prop}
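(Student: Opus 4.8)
The plan is to estimate the innermost sum over $m$ for each fixed $q$, primitive $\psi\pmod q$ and $d\mid kq$, and then to sum the resulting bound over the remaining variables. Fix such $q,\psi,d$ and write $X=Tkq/2\pi d$. The function $b$ is the five-fold Dirichlet convolution of $\mu$, the shifted functions $n\mapsto n^{-\gamma},\,n^{-\alpha},\,n^{\beta}$, and $h\mapsto a(h)\mathds 1_{h\leqs y}$, so by exactly the manipulations that produced \eqref{dirichlet series}--\eqref{G} (now carrying the extra convolution factor, and with the twist $\psi(m)$ turning each zeta factor into a Dirichlet $L$-function), Lemma \ref{DecompositionArithmeticFunction} gives
\[
\sum_{m}\frac{b(md)\psi(m)}{m^{s+\beta}}
=\frac{L(s+\alpha+\beta,\psi)\,L(s+\beta+\gamma,\psi)\,L(s,\psi)}{L(s+\beta,\psi)}\,\widehat G(s,q,d,\psi),
\]
where $\widehat G$ is a finite product over primes dividing $d$ times a partial sum over $h\leqs y$; it is holomorphic for $\Re s>0$ and, on a line $\Re s=1-\nu$ with $0<\nu<1/2$, satisfies $\widehat G(1-\nu+it,q,d,\psi)\ll y^{\nu}(\log T)^{O(1)}\tau_{O(1)}(d)$, the factor $y^{\nu}$ coming from the partial sum (compare \eqref{G3 bound}). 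The structural point is that, since $\psi$ is non-principal, all three numerator $L$-factors are \emph{entire} and $1/L(s,\psi)$ is holomorphic, with $1/L(s,\psi)\ll\log(q(|t|+2))$, throughout the classical zero-free region.

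Next I would apply Perron's formula (Lemma \ref{Perron}) to $\sum_{m\leqs X}b(md)\psi(m)m^{-\beta}$ with the same parameters used above for $\mc M$, namely $\kappa=1+1/\log T$, $U=\exp(c\sqrt{\log T})$, $H=\sqrt U$. Using $|b(n)|\ll\tau_{r+4}(n)(\log n)^{O(1)}$, Shiu's bound \cite{S} for the short sum around $X$, and $B(\kappa)\ll(\log T)^{O(1)}$, the two Perron error terms contribute $\ll X\exp(-c\sqrt{\log T})\tau_{O(1)}(d)$. For the main contour integral I would shift the line from $\Re s=\kappa$ to $\Re s=1-\nu$ with $\nu=c/\log(\eta U)\asymp 1/\sqrt{\log T}$. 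This is where $\eta\ll(\log T)^{A}$ is used decisively: combining the classical zero-free region with Siegel's theorem — whose implied constant we fix in terms of $A$ — one finds that for \emph{every} $q\leqs\eta$ the function $L(s,\psi)$ has no zero in the region $\Re s\geqs 1-\nu$, $|t|\leqs U$, since a possible Siegel zero lies strictly to the left of $1-\nu$. Hence the shift crosses no pole and yields no main term (in contrast to $\mc M$, where the poles of the numerator $\zeta$'s at $s=1,1-\alpha-\beta,1-\beta-\gamma$ produced the three $Z$-terms), at the cost of making $c$ ineffective. On the shifted line, and on the horizontal segments at $|t|=U$ (negligible by convexity bounds for $L$), the integrand is $\ll(\log T)^{O(1)}X^{1-\nu}y^{\nu}\tau_{O(1)}(d)$. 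Since $y=T^{\theta}$ with $\theta<1/2$ while $X=Tkq/2\pi d\geqs T/2\pi\geqs y$, one has $X^{1-\nu}y^{\nu}=X(y/X)^{\nu}\leqs X(2\pi T^{\theta-1})^{\nu}\leqs X\exp(-c\sqrt{\log T})$, so the inner $m$-sum is $\ll X\exp(-c\sqrt{\log T})(\log T)^{O(1)}\tau_{O(1)}(d)$.

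It remains to reassemble. Substituting $X=Tkq/2\pi d$ back into $\mc E_1$, using $|\tau(\bar\psi)|=q^{1/2}$, $\sum_{\psi\pmod q}1\leqs\phi(q)$, the hypotheses $a(mn)\ll|a(m)a(n)|$, $a(n)\ll\tau_r(n)(\log n)^{C}$, and the bound $|\delta(q,kq,d,\psi)|\ll d\,\tau(d)/\phi(kq)$ read off from its definition, the factor $d^{-1}$ from $X$ cancels the factor $d$ in the bound for $\delta$, so the residual sum over $d\mid kq$ is just $\ll\tau_{O(1)}(kq)$; the sum over $k$ then contributes $\ll\sum_{k\leqs y/q}|a(k)|\tau_{O(1)}(k)/\phi(k)\ll(\log T)^{O(1)}$, and one is left with
\[
\mc E_1\ll T\exp(-c\sqrt{\log T})(\log T)^{O(1)}\sum_{2\leqs q\leqs\eta}q^{1/2}|a(q)|\tau_{O(1)}(q)\ll T\exp(-c\sqrt{\log T})\,\eta^{3/2+\epsilon},
\]
after absorbing all the logarithmic factors into the exponential and the $\eta^{\epsilon}$. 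The one genuinely new ingredient relative to the computation of $\mc M$ carried out above is this Siegel-theorem step ruling out an exceptional zero inside the shifted contour when $q\leqs\eta\ll(\log T)^{A}$; the rest is the same contour/Perron/Shiu/divisor-sum bookkeeping, now with $L(s,\psi)$ in place of $\zeta(s)$, and this is also the point I expect to require the most care.
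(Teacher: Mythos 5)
Your proposal is correct and follows essentially the same route as the paper: express the twisted sum $\sum_m b(md)\psi(m)m^{-s-\beta}$ via Lemma~\ref{DecompositionArithmeticFunction} as a ratio of Dirichlet $L$-functions times a tame factor, apply Perron (Lemma~\ref{Perron}) with $\kappa=1+O(1/\log T)$, $U=\exp(c\sqrt{\log T})$, $H=\sqrt U$, handle the Perron error by a Shiu-type bound (the paper actually uses Lemma 6.4 of \cite{Ngdiscrete} rather than Shiu directly, since the coefficients are a convolution of $\tau_4$ with the compactly supported $a$), shift the contour into the zero-free region gaining the factor $x^{\Re s}y^{1-\Re s}\ll x\exp(-c\sqrt{\log x})$, invoke Siegel's theorem for $q\ll(\log T)^A$, and then reassemble with $|\tau(\bar\psi)|=q^{1/2}$ and a bound on $\delta(q,kq,d,\psi)$. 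The one point where you diverge from the paper is the Siegel zero: the paper shifts to $\sigma=1-c/\log(qU)$, which may place the exceptional real zero $\beta_\psi$ inside the contour, and then bounds the resulting residue by $x^{\beta_\psi}\ll x\exp(-c'\sqrt{\log x})$ via Siegel (formula \eqref{ResidueBeta}); you instead take a contour at $\sigma=1-\nu$ with $\nu\asymp 1/\log(\eta U)\asymp 1/\sqrt{\log T}$ uniform in $q$, and argue via Siegel that $\beta_\psi<1-\nu$ for $T$ large, so no residue arises. Both variants work and both make the final constant ineffective; yours is slightly cleaner in not having to estimate a residue, while the paper's is more robust in that the contour depends only on $q$ (not on $\eta$). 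You also use the cruder bound $|\delta(q,kq,d,\psi)|\ll d\tau(d)/\phi(kq)$ in place of the sharper $|\delta|\ll(d,k)\log\log T/(\phi(k)\phi(q))$ and Lemma 6.7 of \cite{Ngdiscrete}; this still yields $\eta^{3/2+\epsilon}$ after summing, so it is adequate here.
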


\begin{prop}[Large moduli]\label{E2}
	If $a(nm)\ll |a(m)a(n)|$ and $a(n)\ll \tau_r(n)(\log n)^C$, then for some $C'=C'(r, A)$, we have
	\begin{align}
	\mathcal{E}_2\ll (\log T)^{C'} T\eta^{-1/2+\epsilon}+ yT^{1/2+\epsilon}+ y^{4/3}T^{1/3+\epsilon}+y^{1/3+\epsilon}T^{5/6+\epsilon}.
	\end{align}
\end{prop}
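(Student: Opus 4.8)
The plan is to bound $\mathcal{E}_2$ by the large sieve, after first removing the M\"obius factor concealed in $b(m)$ via Heath-Brown's identity so that the argument stays unconditional. This is the strategy of Bui and Heath-Brown~\cite{HBB}, adapted to the present shifts $\alpha,\beta,\gamma$ and to the extra Dirichlet-polynomial factor coming from $a(h)$.

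\textbf{Step 1: reduction to multilinear character sums.} First I would bound the arithmetic weight $\delta(q,kq,d,\psi)$ crudely, as in~\cite{CGGSimple}: it is a short divisor-type convolution in $d$, so it costs at worst a few powers of $\log T$ together with divisor-bounded coefficients, and it imposes only mild coprimality constraints. Next I would unfold $b(md)$ using~\eqref{Def b}, and then detach the variable $d$ from the summation with Lemma~\ref{DecompositionArithmeticFunction}, so that the inner sum over $m$ takes the shape $\sum \mu(m_1)\,(\text{shift factors})\,a(h)\,\psi(m_1m_2m_3m_4h)$ over $m_1m_2m_3m_4h\leqs X$ with $h\leqs y$ and $X\asymp Tkq/d$. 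A smooth dyadic partition (or Perron's formula, Lemma~\ref{Perron}, to handle the sharp cut-off $m\leqs Tkq/2\pi d$) then localises all variables dyadically.

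\textbf{Step 2: Heath-Brown's identity and the large sieve.} The factor $\mu(m_1)$ (equivalently $1/L(s,\psi)$ on the Dirichlet-series side) is the only obstruction to an unconditional treatment; I would remove it by applying Heath-Brown's identity to $\mu(m_1)$, writing $m_1$ as a product of $O(1)$ variables carrying either trivial coefficients supported on short intervals or smooth/$\log$-type coefficients. Combined with the dyadic decomposition, $\mathcal{E}_2$ becomes $O((\log T)^{C})$ sums of bilinear type $\sum_{\eta\leqs q\leqs y}\sideset{}{^*}\sum_{\psi\bmod q}\tau(\bar\psi)\sum_{u}\sum_{v}\alpha_u\beta_v\,\psi(uv)$, where $u,v$ are grouped from the dyadic boxes so that their lengths multiply to $\asymp Tq$ and (by the divisor bounds $a(n)\ll\tau_r(n)(\log n)^C$ and $b(m)\ll\tau_{r+4}(m)(\log m)^C$) the $\ell^2$-norms of $\alpha,\beta$ are of size $(\text{length})^{1/2}(\log T)^{O(1)}$. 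For each such form I would extract the Gauss sum and apply Cauchy--Schwarz in $(q,\psi)$, using $|\tau(\bar\psi)|^2=q$: the first factor is $\ll\big(\sum_{q\leqs y}\sum^{*}_{\psi}q\big)^{1/2}\ll y^{3/2}$, and the second is controlled by the multiplicative large sieve inequality $\sum_{q\leqs y}\frac{q}{\phi(q)}\sideset{}{^*}\sum_{\psi}\big|\sum_n c_n\psi(n)\big|^2\ll (y^2+N)\sum_n|c_n|^2$, with the split of $u$ and $v$ chosen so that neither block is pathologically long. Summing over the $O((\log T)^C)$ configurations and tracking which term dominates for which size of $\eta$ yields the four claimed contributions: the $T(\log T)^{C'}\eta^{-1/2+\epsilon}$ term from the regime where the diagonal ($N$) part of the large sieve dominates and the lower cut-off $q\geqs\eta$ is felt, and the polynomial terms $yT^{1/2+\epsilon}$, $y^{4/3}T^{1/3+\epsilon}$, $y^{1/3+\epsilon}T^{5/6+\epsilon}$ from the conductor-squared ($y^2$) part of the large sieve at the various admissible splits, together with the trivial estimates used when a variable is too short for the bilinear bound to help.

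\textbf{Main obstacle.} The genuine work is the bookkeeping in Step 2: one must organise the Heath-Brown decomposition so that \emph{every} resulting term factorises into two blocks of comparable polynomial size -- avoiding the "Type I with a very long smooth variable" configurations where the large sieve is wasteful -- and one must propagate the divisor bounds on $a$ and $b$ through all the rearrangements so that the $\ell^2$-norms entering the large sieve cost only powers of $\log T$. The shifts $\alpha,\beta,\gamma\ll 1/\log T$ are harmless, changing coefficients by at most $(\log T)^{O(1)}$, so I expect no essentially new difficulty beyond a careful adaptation of~\cite{HBB}.
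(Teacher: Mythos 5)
Your Step 1 matches the paper's ``initial cleaning'' in spirit, and your Step 2 correctly identifies Heath-Brown's identity and the large sieve as the core tools, as in \cite{HBB}. But the proposal has a genuine gap: you plan to handle \emph{every} resulting term via Cauchy--Schwarz and the large sieve, and in the ``Main obstacle'' paragraph you say one should organise the decomposition so that every term ``factorises into two blocks of comparable polynomial size -- avoiding the `Type I with a very long smooth variable' configurations.'' This is not possible. Heath-Brown's identity necessarily produces terms in which one of the smooth factors ($1$, or one of the mild twists $n^{-\gamma}$, $n^{-\alpha}$, $n^{\beta}$) occupies a dyadic box of length close to $x \asymp KQT$, leaving the remaining variables with total length $O(1)$ or very short; there is no balanced bilinear split to feed to the large sieve, and the large sieve bound $(Q^2V+N)\|c\|^2$ is then far too weak (the ``diagonal'' gain vanishes). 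These are precisely the Type~I configurations, and the paper handles them separately: since the long variable carries a coefficient of the form $n^c$ with $c\ll 1/\log T$, one applies M\"obius inversion, the P\'olya--Vinogradov inequality, and partial summation to get $\ll \tau(D_i)q^{1/2}\log q$ for the inner character sum, yielding the contribution $Q^2T^{1+\epsilon}/W$ in \eqref{Type1Error}. Without a P\'olya--Vinogradov step for Type~I terms, the argument does not close.

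Two smaller omissions. First, the sharp and $k,q,d$-dependent cut-off $m\leqs Tkq/2\pi d$ forces the paper to use Perron's formula and then the \emph{hybrid} $(q,\psi,t)$ large sieve $\sum_{q\sim Q}\sideset{}{^*}\sum_{\psi}\int_{-V}^{V}|\sum h_m m^{-it}|^2\,dt \ll (Q^2V+H)\sum|h_m|^2$, not the pure multiplicative large sieve you quote; a dyadic smoothing alone does not dispose of the cut-off because the endpoint varies with $k$ and $q$. Second, because $a(n)$ is not supported on square-free integers, the paper cannot directly invoke the $\delta$-bound for square-free $k$; it first writes $k=k'k_q$ with $k_q\mid q^\infty$ and uses the primitivity of $\psi$ to reduce $d\mid kq$ to $d\mid k$, i.e.\ $\delta(q,kq,d,\psi)=\delta(q,k'q,d',\psi)$. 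Your proposal treats $\delta$ as a harmless divisor weight, which skips over this step. These two points are more routine, but the missing Type~I treatment is a real obstruction that needs to be repaired before your argument yields the stated bound.
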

\begin{proof}[Proof of Theorem \ref{twisted moment thm}]
	Combining Proposition \ref{E1} and Proposition \ref{E2}, with $\eta=(\log T)^{C'}$ for $C'$ large enough, we find that $\mc{E}\ll T(\log T)^{-A}$ provided $y=T^{\theta}$ for some fixed $\theta<1/2$. Theorem \ref{twisted moment thm} then follows.
\end{proof}


\section{Proof of Proposition \ref{E1}}\label{SmallmoduliProof}
To prove Proposition \ref{E1}, we use the following lemma.
\begin{lem}\label{NontirivalCharacter}
	Let $\psi \pmod q$ be a non-principal character with $q\ll (\log T)^A $. Then for $T\ll x\ll T^2$ and $d\ll T$, we have
	\begin{align*}
	\sum_{m\leq x} \frac{b(md)\psi(m)}{m^\beta}\ll x\exp (-c\sqrt{\log x}) (\tau_4*|a|)(d)j(d),
	\end{align*}
	where 
	\begin{align}
	j(d)=\prod_{p\mid d} (1+10p^{-1/2}).
	\end{align}
\end{lem}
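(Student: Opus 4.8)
The plan is to expand $b$ as a Dirichlet convolution, peel off the divisibility by $d$ via a combinatorial identity, and then evaluate the resulting character sums by contour integration inside the zero-free region of $L(s,\psi)$; the saving $\exp(-c\sqrt{\log x})$ comes from the fact that $q$ is small — so that Siegel's theorem neutralises a possible exceptional zero — together with $y=T^\theta$ having $\theta<1/2$ while $x\geqs T$.

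By \eqref{Def b} we have $b=\mu\ast u_\gamma\ast u_\alpha\ast u_{-\beta}\ast a_{\leqs y}$, where $u_\nu(n)=n^{-\nu}$ and $a_{\leqs y}(n)=a(n)\mathds{1}_{n\leqs y}$; since the shifts are $\ll 1/\log T$ and all arguments are $\ll T^{O(1)}$ we have $u_\nu(n)\ll 1$ on the relevant ranges, so in particular $|b|\ll\tau_4\ast|a|$. To peel off the divisibility by $d$ I would apply Lemma \ref{DecompositionArithmeticFunction} (at the level of Dirichlet series, with $j=5$), together with the complete multiplicativity of $\psi$ and the hypothesis $a(mn)\ll|a(m)a(n)|$, writing $\sum_{m\leqs x}b(md)\psi(m)m^{-\beta}$ as a sum over factorisations $d=d_1\cdots d_5$ of coefficients — whose absolute values sum over all factorisations to $\ll(\tau_4\ast|a|)(d)$ — times twisted sums $\sum_{\substack{m\leqs x\\(m,D)=1}}c_{\underline d}(m)\psi(m)m^{-\beta}$, in which the divisibility by $d$ has been traded for coprimality conditions $(m,D)=1$ with $D\mid d^\infty$ and $c_{\underline d}$ is again a convolution of $\mu$, the $u_\nu$ and $a_{\leqs y}$ restricted to those residues. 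It then suffices to show that each such twisted sum is $\ll x\exp(-c\sqrt{\log x})\,j(d)$, uniformly over the factorisation.

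For this I would apply Perron's formula (Lemma \ref{Perron}) with $\kappa=1+1/\log x$, $U=\exp(\sqrt{\log T})$ and $H=\sqrt U$. The relevant Dirichlet series is
\[
\sum_{\substack{m\geqs 1\\(m,D)=1}}\frac{c_{\underline d}(m)\psi(m)}{m^{s+\beta}}
=\frac{L(s+\beta+\alpha,\psi)\,L(s+\beta+\gamma,\psi)\,L(s,\psi)}{L(s+\beta,\psi)}\,A_{\leqs y}(s+\beta,\psi)\,E_{\underline d}(s),
\]
where $A_{\leqs y}(w,\psi)=\sum_{h\leqs y}a(h)\psi(h)h^{-w}$ and $E_{\underline d}(s)$ is a finite Euler product over the primes dividing $d$, holomorphic for $\Re(s)>0$ and satisfying $|E_{\underline d}(s)|\ll\prod_{p\mid d}\big(1+O(p^{-\Re(s)})\big)\leqs j(d)$ for $\Re(s)\geqs\tfrac12$. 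Since $\psi$ is non-principal, every $L$-factor is entire, and by the classical zero-free region together with Siegel's theorem — applicable since $q\ll(\log T)^A$, with the resulting constant ineffective — we have $L(s+\beta,\psi)\neq 0$ and $L(\,\cdot\,,\psi)^{\pm1}\ll\log(qU)\ll\sqrt{\log T}$ throughout $\Re(s)\geqs 1-c/\log U$, $|\Im(s)|\leqs U$, for a suitably small $c>0$. Hence the Perron integrand is holomorphic there and I can shift the contour to $\Re(s)=1-c/\log U$ without crossing a pole.

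It remains to bound the three contributions. On the shifted line the integrand is $\ll x^{1-c/\log U}(\log T)^{O(1)}\,|A_{\leqs y}(s+\beta,\psi)|\,j(d)$, and $A_{\leqs y}(w,\psi)\ll y^{1-\Re(w)}\sum_{h\leqs y}|a(h)|/h\ll y^{c/\log U}(\log T)^{O(1)}$; since $x\geqs T\geqs y^{1/\theta}$ with $\theta<1/2$, the net contribution of the powers of $x$ and $y$ is $x\exp\big(-\tfrac{c(1-\theta)}{\log U}\log T\big)$, which absorbs the $(\log T)^{O(1)}$ and, as $\log x\asymp\log T$ for $T\ll x\ll T^2$, is $\ll x\exp(-c'\sqrt{\log x})$. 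The horizontal segments at height $\pm U$ contribute $\ll xU^{-1+\epsilon}$ by convexity bounds for $L$ and are negligible. Finally, the two Perron error terms are controlled by Shiu's theorem \cite{S}: since $|c_{\underline d}(m)|\ll\tau_{r+4}(m)(\log m)^{O(1)}$ we get $\sum_{|m-x|\leqs x/H}|c_{\underline d}(m)|\ll(x/H)(\log x)^{O(1)}\ll xU^{-1/2}(\log x)^{O(1)}$ and $x^\kappa HB(\kappa)/U\ll x(\log x)^{O(1)}/\sqrt U$, both $\ll x\exp(-c\sqrt{\log x})$. Summing over the $\ll(\tau_4\ast|a|)(d)$ factorisations of $d$ and pulling out the uniform factor $j(d)$ yields the lemma. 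The main obstacle is the bookkeeping: disentangling the divisibility by $d$ in $b(md)$ while tracking which coprimality conditions appear, and verifying that the attendant Euler corrections are bounded by $j(d)$ uniformly in the factorisation — the analytic step, a contour shift into the zero-free region, is standard once Siegel's theorem removes the potential exceptional zero.
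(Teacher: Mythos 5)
Your proposal follows the same route as the paper: decompose $b(md)$ via Lemma~\ref{DecompositionArithmeticFunction} into a sum over factorisations of $d$ of products of twisted $L$-functions times a short polynomial $A_{\leqs y}(s,\psi)$ and a finite Euler product over $p\mid d$; bound the short-interval Perron error via $|c_{\underline d}(m)|\ll\tau_{r+4}(m)(\log m)^{O(1)}$ and Shiu; shift the contour to $\Re(s)=1-c/\log(qU)$ with $U=\exp(c\sqrt{\log x})$; and invoke Siegel's theorem to neutralise the possible exceptional zero. The only stylistic difference is your treatment of the exceptional zero: you use Siegel (with $\epsilon<1/(2A)$) to argue that, for $T$ ineffectively large, the pole of $1/L(s+\beta,\psi)$ lies strictly to the left of the shifted contour and so is not crossed at all; the paper instead allows the pole to be crossed, picks up a residue of size $x^{\beta_\psi}$, and then bounds it by $x\exp(-c'\sqrt{\log x})$ via the same Siegel input. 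Both are valid and give the same ineffective constant; the paper's bookkeeping is slightly more robust in that it does not need to verify the relative sizes of $1-\beta_\psi$ and $c/\log(qU)$, but the underlying mechanism is identical.
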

\begin{proof}
After an application of Lemma \ref{Perron} with $\kappa=1+O(1/\log x)$, $H=\sqrt{U}$ with $U$ to be determined later, we have 
\begin{align*}
\sum_{m\leq x}\frac{b(md)\psi(m)}{m^\beta}=\frac{1}{2\pi i}\int_{\kappa-iU}^{\kappa+iU}\sum_{m}\frac{b(md)\psi(m)}{m^{\beta+s}}x^s \frac{ds}{s}+E
\end{align*}
where 
\begin{align*}
E\ll \sum_{x-x/\sqrt{U}\leqs m\leqs x+x/\sqrt{U}}|b(md)\psi(m)| +\frac{x}{\sqrt{U}}\sum_{n\geqs 1}|b(md)\psi(m)|m^{-\kappa}.
\end{align*}
From \eqref{Def b}, we have 
\begin{align*}
b(md)\ll (md)^\beta (\tau_4* |a|)(md).
\end{align*}
Therefore, the second term above is 
$\ll ({x}/{\sqrt{U}})(\tau_4* |a|)(d)(\log x)^C$ and in fact, the same bound holds for the first term. To see this we use Lemma 6.4 of \cite{Ngdiscrete} which states that  
\begin{align*}
\sum_{t-u\leq n\leq t}(\tau_k*a)(n)\ll u (\log t)^{k-1}\|a(n)/n\|_1
\end{align*}
for $x/2\leq t-u \leq t\leq x$, $T\ll x\leq T^2$, $u=x/U$ with $\exp(c\sqrt{\log x})\leq U \leq \left( \frac{\log x}{\log \log x}\right)$, and $a(n)$ supported on integers $\leq y\leq \sqrt{T}$.
Therefore, 
\begin{align}
E\ll \frac{x}{\sqrt{U}}(\tau_4*|a|)(d)(\log x)^C.
\end{align}
Now it remains to compute 
\begin{align*}
\int_{\kappa-iU}^{\kappa+iU}\sum_{m}\frac{b(md)\psi(m)}{m^{\beta+s}}x^s \frac{ds}{s}
\end{align*}
We shall move the contour to the line $\Re(s)=1-c/\log (qU)$ for some absolute $c$. To do this we first express the
function $\sum_{m}b(md)\psi(m)m^{-\beta-s}$ in terms of $L$-functions.  
 
Applying Lemma \ref{DecompositionArithmeticFunction}, we have 
\begin{align*}
&\sum_{m}\frac{b(md)\psi(m)}{m^{s+\beta}}\\&=\sum_{d_1d_2d_3d_4d_5=d}\sum_{(m_1, d_1d_2d_3d_4)=1}\frac{\mu(m_1d_5)\psi(m_1)}{m_1^{s+\beta}}\sum_{(m_2,d_1d_2d_3)=1}\frac{(m_2d_4)^{-\gamma}\psi(m_2)}{m_2^{s+\beta}}
\\& \times \sum_{(m_3,d_1d_2)=1}\frac{(m_3d_3)^{-\alpha}\psi(m_3)}{m_3^{s+\beta}}\sum_{(m_4,d_1)=1}\frac{(m_4d_2)^\beta\psi(m_4)}{m_4^{s+\beta}}\sum_{m_5}\frac{a(m_5d_1)\psi(m_5)}{m_5^{s+\beta}}\\
& =\sum_{\prod_{i=1}^5d_i=d}\frac{\mu(d_5)}{L(s+\beta, \psi)}\prod_{p\mid d}\left( 1-\frac{\psi(p)}{p^{s+\beta}}\right)^{-1}d_4^{-\gamma}L(s+\beta+\gamma, \psi)\prod_{p\mid d_1d_2d_3}\left(1-\frac{\psi(p)}{p^{s+\beta+\gamma}}\right)\\
& \times d_3^{-\alpha} L(s+\beta+\alpha, \psi)\prod_{p\mid d_1d_2}\left( 1-\frac{\psi(p)}{p^{s+\beta+\alpha}}\right)d_2^\beta L(s, \psi)\prod_{p\mid d_1}\left(1-\frac{\psi(p)}{p^s}\right)\sum_{m_5}\frac{a(m_5d_1)\psi(m_5)}{m_5^{s+\beta}}\\
&=\frac{L(s+\beta+\gamma, \psi)L(s+\beta+\alpha, \psi)L(s, \psi)}{L(s+\beta, \psi)}\prod_{p\mid d}\left( 1-\frac{\psi(p)}{p^{s+\beta}}\right)^{-1}\\& \times \sum_{\prod_{i=1}^5 d_i=d}\mu(d_5)d_4^{-\gamma}d_3^{-\alpha}d_2^\beta\prod_{p\mid d_1d_2d_3}\left(1-\frac{\psi(p)}{p^{s+\beta+\gamma}}\right)\prod_{p\mid d_1d_2}\left(1-\frac{\psi(p)}{p^{s+\beta+\alpha}}\right)\prod_{p\mid d_1}\left(1-\frac{\psi(p)}{p^s}\right)A(s+\beta, d_1),
\end{align*}
where 
\begin{align*}
A(s, r)=\sum_{m}\frac{a(mr)\psi(m)}{m^s}=\sum_{mr\leq y} \frac{a(mr)\psi(m)}{m^s}.
\end{align*}

To bound the horizontal integrals when moving the contour, we need some bounds for the $\sum_{m}b(md)\psi(m)m^{-\beta-s}$ for $\Re(s)=1-O(1/\log q\Im s)$ with $\Im s\gg 1$. 
Assuming that 
\begin{align*}
a(mn)\ll |a(m)||a(n)|,
\end{align*}
we have 
\begin{align*}
A(s,r)\ll |a(r)| \sum_{m\leq y}\frac{|a(m)|}{m^{\Re(s)}}\ll |a(r)| \|a(n)/n\|_1 y^{1-\Re(s)}\ll |a(r)| (\log x)^C y^{1-\Re(s)}.
\end{align*}
We also have for $1-\Re(s)\ll \frac{1}{\log q |\Im s|}$ and $ \Im s\gg1$
\begin{align*}
&\frac{1}{(\log q|\Im s| )^c}\ll L(s,\psi)\ll (\log q |\Im s|)^c,\\
\end{align*}
Therefore, when $1-\Re(s)\ll \frac{1}{\log q |\Im s|}$ and $\Im s\gg 1$, we have
\begin{align}\label{b(md)bound}
\sum_{m} \frac{b(md)\psi(m)}{m^{s+\beta}}\ll (\log q|\Im s|)^C j(d)(\tau_4*|a|)(d)(\log x)^Cy^{1-\Re(s)}.
\end{align}

There is at most one simple pole for $\sum_{m}b(md)\psi(m)m^{-s-\beta}$ for all non principal characters $\psi\pmod q$ with $q\ll T$ in the region $\{s=\sigma+it| \sigma\geq \sigma_1(t):= 1-c/\log q (|t|+2) \}$, where $c$ is some absolute constant. By Siegel's theorem, if this pole exists, then it is a real number $\beta$ such that $1-\beta \gg q^{-\epsilon}$. Thus,
\begin{align}
\sum_{m\leq x}\frac{b(md)\psi(s)}{m^\beta}&\ll \int_{\sigma_1(U)-iU}^{\sigma_1(U)+iU}\sum_{m} \frac{b(md)\psi(m)}{m^{s+\beta}}x^s\frac{ds}{s}+\left| \operatorname{Res}_{s=\beta}\sum_{m}\frac{b(md)\psi(m)}{m^{s+\beta}}\frac{x^s}{s}\right|\nonumber
\\
&+\frac{x}{{U}}(\tau_4*|a|)(d)(\log (qUx))^C +\frac{x}{\sqrt{U}}(\tau_4*|a|)(d)(\log x)^C,\label{horizontal}
\end{align}
where the third term is the contribution from the horizontal integrals using \eqref{b(md)bound}. 
Using \eqref{b(md)bound} again for the first integral we have
\begin{align}\label{leftIntegral}
\int_{\sigma_1(U)-iU}^{\sigma_1(U)+iU}\sum_{m}\frac{b(md)\psi(m)}{m^{s+\beta}}x^s \frac{ds}{s}
&\ll (\log (q U x))^C j(d)(\tau_4* |a|)(d) x^{\sigma_1(U)}\nonumber\\
&\ll j(d)(\tau_4*|a|)(d) x \exp (-c \frac{\log x}{\log q U}).
\end{align}
For the residue at $\beta$, we have 
\begin{align}\label{ResidueBeta}
x^\beta \ll x^{1-q^{-\epsilon}} \ll x \exp (-\frac{\log x}{q^\epsilon})\ll x\exp (- \frac{\log x}{(\log x)^{\epsilon A}})\ll x\exp (-c' \sqrt{\log x})
\end{align}
by choosing $\epsilon\ll \frac{1}{2A}$. 
Combining \eqref{horizontal}, \eqref{leftIntegral} \eqref{ResidueBeta} and choosing $U=\exp (c\sqrt{\log x})$, we have for $q\leq (\log x)^A$,
\begin{align}
\sum_{m} \frac{b(md)\psi(m)}{m^{\beta}}\ll_A j(d)(\tau_4*|a|)(d) x \exp(-c'\sqrt{\log x}).
\end{align} 
\end{proof}
\begin{proof}[Proof of Proposition \ref{E1}]
From Lemma 6.6 in \cite{Ngdiscrete}, we have 
\begin{align}\label{deltaBound}
|\delta(q, kq, d, \psi)|\ll \frac{(d, k)\log \log T}{\phi(k)\phi(q)}
\end{align}
for primitive characters $\psi$ and $kq\ll T$. 
From Lemma 6.7 in \cite{Ngdiscrete}, we also have 
\begin{align}\label{(d, k)sum}
\sum_{d\mid kq}\frac{(d,k)h(d)}{d}\ll (1*h)(k)\|h(n)/n\|_1
\end{align}
for positive multiplicative functions $h$. 
Using \eqref{deltaBound}, \eqref{(d, k)sum} and properties of $a(n)$, we have
\begin{align*}
\mathcal{E}_1&=\sum_{2\leq q\leq \eta} \sideset{}{^*}\sum_{\,\,\psi \pmod q} \tau(\bar{\psi}) \sum_{kq\leq y}\frac{\overline{a(kq)}}{(kq)^{1-\beta}} \sum_{d\mid kq}\delta(q, kq, d, \psi) \sum_{m\leq Tkq/2\pi d} \frac{b(md)\psi(m)}{(md)^\beta}\\
&\ll \sum_{q\leq \eta} \phi(q)\sqrt{q} \sum_{kq\leq y}\frac{|a(kq)|}{kq}\sum_{d\mid kq} \frac{(d, k)\log \log T}{\phi(k)\phi(q)} j(d)(\tau_4*|a|)(d)\frac{Tkq}{d} \exp(-c'\sqrt{\log T})\\
& \ll \sum_{q\leq \eta}q^{3/2}\sum_{kq\leq y}\frac{|a(kq)|}{kq} \sum_{d\mid kq}\frac{(d, k)j(d)(\tau_4* |a|)(d)}{d}T \exp (-c''\sqrt{\log T})\\
& \ll \sum_{q\leq \eta } |a(q)|q^{1/2} \sum_{kq\leq y}\frac{|a(k)|}{k}\sum_{d\mid kq} \frac{(d, k) \tau(d)(\tau_4* |a|)(d)}{d}T \exp(-c'''\sqrt{\log T}) \\
& \ll \sum_{q\leq \eta}|a(q)|\tau(q)q^{1/2}\sum_{k\leq y}\frac{|a(k)| \tau(k)(\tau_5*|a|)(k)}{k} \|(\tau_4*|a|)(n)/n\|_1T\exp(-c'''\sqrt{\log T})\\
& \ll \eta^{3/2+\epsilon} T \exp( -c''''\sqrt{\log T})
\end{align*}	
where we have used $j(d)\ll \tau(d)\leqs \tau(k)\tau(q)$.   
\end{proof}


\section{Proof of Proposition \ref{E2}}\label{LargemoduliPropProof}

\subsection{Initial cleaning}
The proof of Proposition \ref{E2} is similar to \cite{HBB}, and we give the exposition by considering Type I/II terms. One main difference is that our coefficients $a(n)$ are not supported on square-free integers. This affects the treatment of $\delta(q, kq, d, \psi)$ in the initial cleaning stage to get rid of the $q$ dependence on $d$ in the sum $d\mid kq$. In our case, $a(n)$ is not supported on square-free integers, but we can still remove the condition $d\mid q$ by exploiting the fact that $\psi $ has conductor $q$. 

	We write $k=k' k_q$, where $(k',q)=1$ and $k_q$ is such that $p|k_q\implies p|q$. Then we have 
	\begin{align*}
	\delta(q, kq, d, \psi)=\sum_{e\mid (d, k)}\frac{\mu(d/e)}{\phi(kq/e)}\bar{\psi}\left( -\frac{k'k_q}{e}\right)\psi\left( \frac{d}{e}\right)\mu\left( \frac{k'k_q}{e}\right).
	\end{align*}
	Since $\psi$ is a character modulo $q$, we have that only the terms $e=k_qe'$ with $(e',q)=1$ contribute to $\delta(q, kq, d, \psi)$. Thus, only the terms with $d=k_qd'$ such that $(d',q)=1$ contribute to $\delta(q, kq, d, \psi)$, in which case
	\begin{align*}
	\delta(q, kq,d, \psi)= \sum_{e\mid (d',k')} \frac{\mu(d'/e)}{\phi(k'q/e)}\bar{\psi}\left(- \frac{k'}{e}\right)\psi\left(\frac{d'}{e}\right)\mu(k'/e)=\delta(q, k'q, d',\psi).
	\end{align*}
	We see that
\begin{align*}
\mathcal{E}_2&=\sum_{\eta \leq q\leq y} \sideset{}{^*}\sum_{\,\,\psi \pmod q} \tau(\bar{\psi}) \sum_{kq\leq y} \frac{a(kq)}{(kq)^{1-\beta}}\sum_{d\mid kq} \delta(q, kq, d, \psi)\sum_{m\leq Tkq/2\pi d}\frac{b(md)\psi(m)}{(md)^\beta}\\
&=\sum_{\eta \leq q\leq y} \sideset{}{^*}\sum_{\,\,\psi \pmod q} \tau(\bar{\psi}) \sum_{\substack{k_q\leq y\\ k_q\mid q^\infty}}\sum_{\substack{k_qkq\leq y\\ (k, q)=1}} \frac{a(k_qkq)}{(k_qkq)^{1-\beta}}\sum_{\substack{d\mid k}} \delta(q, kq, d, \psi)
\sum_{m\leq Tkq/2\pi d}\frac{b(mdk_q)\psi(m)}{(mdk_q)^\beta}.
\end{align*}
Note that 
\begin{align}
	\delta(q, kq, d, \psi) \ll \sum_{e\mid d} \frac{1}{\phi(kq/e)} \ll \sum_{e\mid d}\frac{e}{kq}(\log \log T)\ll  (\log\log T)^2 dk^{-1}q^{-1}
\end{align}
since $\phi(n)\gg n (\log \log n)^{-1}$ and $\sigma(n)\ll n \log \log n$. Applying this along with the bounds $|a(mn)|\ll |a(m)||a(n)|$ and $|\tau(\psi)|=q^{1/2}$ we have 
\begin{align*}
\mathcal{E}_2
\ll & 
\sum_{\eta \leq q\leq y} \frac{|a(q)|}{q^{3/2}}
\sideset{}{^*}\sum_{\,\,\psi \pmod q} 
\sum_{\substack{k_q\leq y\\ k_q\mid q^\infty}}
\sum_{\substack{k\leq y/qk_q\\ (k, q)=1}}
 \frac{|a(k_qk)|}{k_qk^2}
\Big( \sum_{\substack{d\mid k}} d\Big)
\bigg|\sum_{m\leq Tkq/2\pi d}\frac{b(mdk_q)\psi(m)}{(mdk_q)^\beta}\bigg|
\\
\ll &
\sum_{\eta \leq q\leq y} \frac{|a(q)|}{q^{3/2}}
\sideset{}{^*}\sum_{\,\,\psi \pmod q} 
\sum_{\substack{k_q\leq y\\ k_q\mid q^\infty}}
\sum_{\substack{d\leqs y/qk_q\\ (d, q)=1}}\frac{|a(dk_q)|}{dk_q}
\sum_{\substack{k\leq y/qk_qd\\ (k, q)=1}}
 \frac{|a(k)|}{k^2}
\bigg|\sum_{m\leq Tkq/2\pi }\frac{b(mdk_q)\psi(m)}{(mdk_q)^\beta}\bigg|.
\end{align*}
After grouping $k_q$ and $d$ together and removing the condition $(k,q)=1$, we may upper bound this by 
\[
\sum_{\eta \leq q\leq y} \frac{|a(q)|}{q^{3/2}}
\sideset{}{^*}\sum_{\,\,\psi \pmod q} 
\sum_{d\leqs y/q}\frac{|a(d)|}{d}
\sum_{\substack{k\leq y/qd}}
 \frac{|a(k)|}{k^2}
\bigg|\sum_{m\leq Tkq/2\pi }\frac{b(md)\psi(m)}{(md)^\beta}\bigg|.
\]

	We divide the summation over $k, q,d$ into dyadic intervals 
	$K\leqs k \leqs 2K, Q\leqs q\leqs 2Q, D\leqs d\leqs 2D$ where 
	\begin{equation}
	\label{KQD}
	\eta< Q\leqs y, \qquad KQD\ll y
	\end{equation} 
	to obtain
 \begin{align*}
	\mathcal{E}_2&\ll \sideset{}{'}\sum_{D}\sideset{}{'}\sum_{Q}\sideset{}{'}\sum_{K}
	\sum_{\substack{q\sim Q}} \frac{|a(q)|}{q^{3/2}}
	\sum_{\substack{d\sim D}}\frac{|a(d)|}{d}
	\sum_{\substack{k\sim K} } \frac{|a(k)|}{k^2}
	\sideset{}{^*}\sum_{\,\,\psi \pmod q} \left|\sum_{m\leq kqT/2\pi}\frac{b(md)\psi(m)}{(md)^\beta}\right|.
	\end{align*}
	Here $\sum_{N}'$ is used to indicate the summation of the dyadic partition, so that $\sum_{N}' 1 \ll \log T$, and $\sum_{n\sim N}$ means $\sum_{N\leqs n\leqs 2N}$. 
	Upon bounding by the maximal dyadic sums we find that there exists a $K,Q, D$ satisfying \eqref{KQD} such that 
	 \begin{align*}
	\mathcal{E}_2\ll & (\log T)^3
	\sum_{\substack{d\asymp D}}\frac{|a(d)|}{d}
	\sum_{\substack{k\asymp K} } \frac{|a(k)|}{k^2}
	\sum_{\substack{q\sim Q}} \frac{|a(q)|}{q^{3/2}}
	\sideset{}{^*}\sum_{\,\,\psi \pmod q} \left|\sum_{m\leq kqT/2\pi}\frac{b(md)\psi(m)}{(md)^\beta}\right|.
	\end{align*}
	Applying $a(n)\ll \tau_r(n)(\log n)^C$ and the more brutal bound $a(q)\ll q^{\epsilon}$ we arrive at the following	
 \begin{align}
 \label{E_2 initial bound}
	\mathcal{E}_2\ll
	&  \frac{(\log T)^C}{KQ^{3/2-\epsilon}} 	
	\sum_{\substack{d\asymp D}}\frac{|a(d)|}{d}
	\sum_{\substack{q\sim Q}} 
	\sideset{}{^*}\sum_{\,\,\psi \pmod q} \max_{x\leqs 2KQT}\left|\sum_{m\leqs x }\frac{b(md)\psi(m)}{(md)^\beta}\right|.
	\end{align}
where $b(n)$ is defined in \eqref{Def b}.
%
\subsection{Combinatorial decomposition}	
To evaluate the sum over $m$, we apply Heath-Brown's identity to $\mu$ to decompose $b(n)$ into $O((\log T)^C)$ linear combinations of functions of the form $(f*g)(n)$ where $g$ is supported on integers of short lengths and $g(n)=n^c \psi(n)$ (Type I) or both $f$ and $g$ are supported on integers of short lengths (Type II). For Type I terms, we obtain cancellation from the sum over $\psi$ using P\'olya-Vinogradov inequality. For Type II terms, we obtain cancellation using the large sieve inequality on short Dirichlet polynomials. 

	Let $M(s)=\sum_{n\leq z^{1/J}}\mu(n)n^{-s}$. From Heath-Brown's identity
	\begin{align*}
	\frac{1}{\zeta(s)}=\sum_{1\leq j \leq J} (-1)^{j-1}\binom{J}{j}\zeta(s)^{j-1}M(s)^{j}+ \frac{1}{\zeta(s)}\left(1-M(s)\zeta(s)\right)^J
	\end{align*} 
	we have for $n\leq z$, 
	\begin{align}\label{HeathBrownMu}
	\mu(n)=\sum_{1\leq j\leq J}(-1)^{j-1}\binom{J}{j}1^{(*)(j-1)}*\mu 1_{[1, z^{1/J}]}^{(*)j}.
	\end{align}
	Since $md\ll KQT\ll y T\ll T^{3/2}$, we take $z=T^{3/2}$. 
On splitting each range of summation into dyadic intervals, we see $b(n)$ can be written as a linear combination of $O((\log T)^ {2J+3})$ expressions of the form $f_1*\cdots * f_{2J+3}(n)$, where $f_i$ are supported on dyadic intervals $[F_i/2, F_i]$. For terms in which $f_i$ is absent, we set $F_i=1$, and take $f_i(1)=1, f_i(n)=0, n\geq 2$. For $F_i>1$, we have 
\begin{align*}
&f_i(n)=\mu 1_{[1,z^{1/J}]}(n), \ i=1, \cdots J\\
&f_j(n)=1, \ j=J+1, \cdots, 2J-1, \\
&f_{2J}(n)=n^{-\gamma}, f_{2J+1}(n)=n^{-\alpha}, f_{2J+2}=n^{\beta}, f_{2J+3}=a(n).
\end{align*}
Note that $F_{2J+3}\leq y$ and $F_i \ll T^{3/2J}$ for $i=1, \cdots, J$. 
By Lemma \ref{DecompositionArithmeticFunction}, we write 
\begin{align*}
\frac{b(md)}{(md)^\beta}=\frac{f_1* \cdots * f_{2J+3}(md)}{(md)^\beta}= \sum_{d_1\cdots d_{2J+3}=d}g_1 * \cdots * g_{2J+3}(m)
\end{align*}
with 
\begin{align*}
g_i(m)=g_i(m;d_1, \dots, d_i)
=
\left\{\begin{array}{ll}
f_i(md_i)(md_i)^{-\beta}, & \text{ if } (m, D_i)=1, \text{ where } D_i=d_1 \cdots d_{i-1},\\
0, & \text{ otherwise. }
\end{array}\right.
\end{align*}
Therefore, 
\begin{multline*}
\mathcal{E}_2
 \ll \frac{(\log T)^C}{KQ^{3/2-\epsilon}} \sideset{}{'}\sum_{F_i}\sum_{d\asymp D} \frac{|a(d)|}{d}\sum_{d_1 \cdots d_{2J+3}=d}
\\\times
\sum_{q\sim Q} \sideset{}{^*}\sum_{\,\,\psi \pmod q}
\max_{x\leq 2KQT}\left |\sum_{m\leq x} {(g_1* \cdots * g_{2J+3})(m)\psi(m)} \right|.
\end{multline*}
If $x\ll (yT)^{1/2}$, then we can bound trivially
\begin{align}
&\frac{(\log T)^C}{KQ^{3/2-\epsilon}}\sideset{}{'}\sum_{F_i}\sum_{d\leq y}\frac{|a(d)|}{d}\sum_{d_1 \cdots d_{2J+3}=d} \sum_{q\sim Q} \sideset{}{^*}\sum_{\,\,\psi \pmod q} \left| \sum_{m\leq x} {g_1* \cdots * g_{2J+3}(m)\psi(m)}\right|\nonumber\\
& \ll \frac{Q^2 y^{1/2}T^{1/2+\epsilon}}{KQ^{3/2-\epsilon}}\ll yT^{1/2+\epsilon}.\label{small x}
\end{align}
Thus, we arrive at the bound
\begin{equation}
\label{E_2 bound}
\mathcal{E}_2
 \ll \frac{(\log T)^C}{KQ^{3/2-\epsilon}} \sum_{d\asymp D} \frac{|a(d)|}{d}S(Q,T,d)
\end{equation}
where
 \begin{align}\label{ConvolutionSum}
\mathcal{S}(Q, T, d) =\sideset{}{'}\sum_{F_i}\sum_{d_1 \cdots d_{2J+3}=d}\sum_{q\sim Q}\sideset{}{^*}\sum_{\,\,\psi \pmod q}\max_{(yT)^{1/2}\leq x\leq 2KQT}\left|\sum_{ m\leq x}(g_1*\cdots *g_{2J+4})(m)\psi(m)\right|
 \end{align}
 and where each $g_i$ is supported on $[G_i/2, G_i]$ with $G_i=F_i/d_i$ and $\prod_{i}G_i\ll x$. 
 
 Let $x\gg W\gg x^{2/3}$ be a parameter to be chosen later.
  We see that there exists an $i$ such that $G_i\gg W$ (Type I) or there exists a subset $S\subset\{1,\dots 2J+3\}$ such that $x/W\ll \prod_{i\in S} G_i\ll W$ (Type II). Indeed, if there is an $i$ such that $G_i \gg W$ or $x/W\ll G_i \ll W$ then we are done. Otherwise we may suppose $G_i \ll x/W$ for all $i$. Since $\prod_{i=1}^{2J+3}G_i\gg x\gg W$, there exists an $i_0$ such that $\prod_{i=1}^{i_0}G_i\gg x/W$ and $\prod_{i=1}^{i_0-1}G_i\ll x/W$, and thus  
  \begin{align*}
  \frac{x}{W}\ll \prod_{i=1}^{i_0}G_i\ll \frac{x}{W}\frac{x}{W}\ll W.
  \end{align*}
 \subsection{Type I terms} When $x\gg (yT)^{1/2}\gg y^{3/2}$, we have $ y\ll x^{2/3}\ll W$. By taking $J$ large enough, we also have $z^{1/J} \ll T^{3/2J}\ll T^{1/3}\ll x^{2/3}\ll W$. Thus if there exists an $i$ such that $G_i\gg W$, we must have $i\in \{ J+1, \dots, 2J+2\}$. 
 
By an application of M\"obius inversion, the P\'olya--Vinogradov inequality and partial summation, we see 
\begin{align*}
\sum_{\substack{n_i\sim G_i\\ (n_i, D_i)=1}}n_i^c\psi(n_i)\ll \tau(D_i)q^{1/2}\log q
\end{align*}
uniformly for $c\ll 1/\log T$. 
By grouping the rest of the functions in the $2J+3$ convolution to a function $\tilde{g}$, we see the sum over $m$ in \eqref{ConvolutionSum} becomes
\begin{align*}
&\sum_{mn_i\leq x}\tilde{g}(m)\psi(m) \sum_{\substack{n_i\sim F_i/d_i\\ (n_i, D_i)=1}}(n_id_i)^{c}\psi(n_i)\\
& \ll \frac{x}{W}Q^{1/2}T^\epsilon.
\end{align*}
Therefore, the contribution from Type I terms to $\mathcal{E}_2$ is bounded by 
\begin{align}\label{Type1Error}
\frac{(\log T)^C}{KQ^{3/2-\epsilon}}Q^2 \frac{KQT}{W}Q^{1/2}T^\epsilon= \frac{Q^2T^{1+\epsilon}}{W}.
\end{align}

\subsection{Type II terms} For Type II terms, we use the large sieve inequality to obtain cancellations.  To start with, we have from Perron's formula 
	\begin{align*}
	\sum_{m\leq x}{(g_1* \cdots * g_{2J+3})(m)\psi(m)}=\frac{1}{2\pi i}\int_{\kappa-iU}^{\kappa+iU} B(s, \psi, \vec{d})x^s \frac{ds}{s}+T^\epsilon,
	\end{align*}
	where $\vec{d}=(d_1, \cdots, d_{2J+3}), U=T^{20}$ and $\kappa\asymp 1/\log (KQT)$ and 
	\begin{align*}
	B(s, \psi, \vec{d})=\sum_{m}(g_1*\cdots * g_{2J+3})(m)\psi(m)m^{-s}.
	\end{align*}
	The error from $T^\epsilon$ can be bounded by
	\begin{align}\label{Type2ErrorPerron}
	\frac{(\log T)^C }{KQ^{3/2-\epsilon}}Q^2 T^\epsilon\ll Q^{1/2+\epsilon}T^\epsilon\ll y^{1+\epsilon}
	\end{align}
and thus 
\begin{equation}
\label{SQ bound}
S(Q,T,d)\ll 
\sideset{}{'}\sum_{F_i}\sum_{d_1\ldots d_{2J+3}=d}\sum_{q\sim Q}\sideset{}{^*}\sum_{\,\,\psi \pmod q}\int_{-U}^{U}\frac{\log(KQT)}{1+|t|}|B(\kappa+it,\psi,\vec{d})| dt 
+
y^{1+\epsilon}.
\end{equation}
Let $H_j( \psi ,t)=\sum_{n\sim G_j}g_j(n)\psi(n)n^{-\kappa-it}$ and write
\[B(\kappa+it,\psi,\vec{d})=\prod_{j=1}^{2J+3}H_{j}(\psi,t)=\mathcal{A}(\psi, t)\mathcal{B}(\psi, t)\]
where $\mathcal{A}(\psi, t)=\prod_{i\in S}H_i(\psi, t)$, $\mathcal{B}(\psi, t)=\prod_{i\not \in S}H_i(\psi, t)$ are Dirichlet polynomials of lengths $A, B$ respectively with $A, B\ll W$ from the definition of $S$. 

%
%

It is enough to bound uniformly for $1\leqs V\leqs T^{20}$, 
\begin{align}
\label{AB int}
\frac{1}{V}\sum_{q\sim Q}\sum_{\psi \pmod q}^*\int_{-V}^{V} |\mathcal{A}(\psi, t)\mathcal{B}(\psi, t)|dt.
\end{align}
After an application of the Cauchy-Schwarz inequality and the large sieve inequality
\begin{align*}
\sum_{q\sim Q}\sum_{\psi \pmod q}^* \int_{-V}^V \left|\sum_{m\leq H}h_mm^{-it}\right|^2dt\ll (Q^2V+H)\sum |h_m|^2,
\end{align*}
 we see that \eqref{AB int} is bounded by 
\begin{align*}
&\frac{1}{V}\left(\sum_{q\sim Q} \sum_{\psi \pmod q}^*\int_{-V}^V|\mathcal{A}(\psi, t)|^2dt\right)^{1/2}\left(\sum_{q\sim Q} \sum_{\psi \pmod q}^*\int_{-V}^V|\mathcal{B}(\psi, t)|^2dt\right)^{1/2}\\
&\ll \tau_R(d)(\log T)^CV^{-1}((Q^2V+A)A(Q^2V+B)B)^{1/2}\\
& \ll \tau_R(d)(\log T)^C(AB)^{1/2}(QV^{-1/2}(A+B)^{1/2}+Q^2)+(\log T)^CABV^{-1}
\end{align*}
for some positive integer $R$ since $g_j(m)\ll \tau_{r}(md)(\log md)^{C^\prime}\ll  \tau_{r}(m)\tau_{r}(d)(\log T)^{C^\prime} $.
Applying this in \eqref{SQ bound} and then \eqref{E_2 bound} we see these terms contribute to $\mathcal{E}_2$ at most
\begin{align}
& \frac{(\log T)^{C'}}{KQ^{3/2-\epsilon}}\Big(\sum_{d\asymp D}\frac{|a(d)|\tau_{2J+3}(d)\tau_R(d)}{d}\Big)\nonumber
\\
&\qquad\times\sup_{\substack{1\leqs V\leqs T^{20}\\ AB\ll KQT}}\left((AB)^{1/2}(QV^{-1/2}(A+B)^{1/2}+Q^2)+ABV^{-1}\right)\nonumber\\
&\ll (\log T)^{C'}Q^{\epsilon}\sup_{1\leqs V\leqs T^{20}} \Big(T^{1/2}W^{1/2}V^{-1/2}K^{-1/2}+Q T ^{1/2} K^{-1/2}+ TQ^{-1/2}V^{-1}\Big)\nonumber\\
& \ll (\log T)^{C'}Q^\epsilon\left( T^{1/2}W^{1/2}K^{-1/2}+QT^{1/2}+TQ^{-1/2}\right).\label{Type2Error}
\end{align}


Let $W=(KQT)^{2/3}$. Combining \eqref{small x}, \eqref{Type1Error}, \eqref{Type2ErrorPerron} and \eqref{Type2Error} we have 
\begin{align*}
\mathcal{E}_2\ll y^{4/3} T^{1/3+\epsilon}+ (\log T)^{C'}\left( yT^{1/2+\epsilon}+ T\eta^{-1/2+\epsilon}+y^{1/3}T^{5/6+\epsilon}\right)
\end{align*}
and the result follows.


\end{document}